\title{Automatic semigroups \emph{vs} automaton semigroups}
\titlerunning{Automatic semigroups \emph{vs} automaton semigroups}
\author{Matthieu Picantin}
{IRIF, UMR 8243 CNRS \& Univ. Paris Diderot, 75013 Paris, France}
{picantin@irif.fr}
{https://orcid.org/0000-0002-7149-1770}
{}%
\authorrunning{M. Picantin}
\subjclass{F.1.1 Models of Computation, F.4.3 Formal Languages.}
\keywords{Mealy machine, semigroup, rewriting system, automaticity, self-similarity.}
\colorlet{couleurLettre}{green!65!black}
\tikzstyle{etatPPrimal}=[minimum size=12pt,inner sep=0pt,color=black,draw=black,fill=white]
\tikzstyle{etatDualDual}=[minimum size=12pt,inner sep=0pt,color=couleurLettre,draw=couleurLettre,fill=white]
\theoremstyle{plain}
\newtheorem{proposition}[theorem]{Proposition}
	\newcommand{\bzer}{{\color{black} 0}}
	\newcommand{\bone}{{\color{black} 1}}
	\newcommand{\bmon}{{\color{black} \overline{1}}}
	\newcommand{\czer}{0}
	\newcommand{\cone}{1}	%{\color{couleurTransition}
	\newcommand{\cmon}{\overline{1}}
\newcommand{\aut}[1]{{\mathcal #1}}
\newcommand{\dz}{\mathfrak d}
\newcommand{\word}[1]{{\mathbf {#1}}}
\def\eref#1{(\ref{#1})}
\newcommand{\pres}[1]{\langle{#1}\rangle}
\newcommand{\presm}[1]{\pres{{#1}}^{\ttunit}_{\hspace*{-1pt}+}}
\newcommand{\press}[1]{\pres{{#1}}_{\hspace*{-1pt}+}}
\def\MM{S}%monoid/semigroup
\def\ff{a}%monoid/semigroup
\def\gg{b}%monoid/semigroup
\def\hh{c}%monoid/semigroup
\newcommand\Gar{\mathcal Q}
\newcommand\Fin{\mathcal J}
\newcommand\Bic{\mathbf B}
\newcommand\Malcev{\mathbf T}
\def\ss{s}%monoid/semigroup
\def\tt{t}%monoid/semigroup
\def\pp{p}%monoid/semigroup
\newcommand\EV{\textsc{ev}}
\newcommand\NF{\textsc{nf}}
\newcommand\ID{\textsc{i}\hbox{\small d}}
\newcommand\NM{\textsc{n}}
\newcommand\NMbar{\hbox{$\overline{\textsc{n}}$}}
\newcommand{\card}[1]{|#1|}%{|\!|#1|\!|}	%{\# #1}
\newcommand\comp{\mathbin{\vcenter{\hbox{$\scriptstyle\circ$}}}}
\newcommand\Mea{\mathcal M}
\newcommand\Thu{\mathcal T}
   \def\equalsfill{$\m@th\mathord=\mkern-7mu
     \cleaders\hbox{$\!\mathord=\!$}\hfill
     \mkern-7mu\mathord=$}
\newcommand\BB{\hbox{$\mathbf{B}$}}
\newcommand\BS{\hbox{$\mathbf{B\!\!\;S}$}}
\newcommand\AK{\hbox{$\mathbf{A\!\!\;K}$}}
\newcommand\WW{\hbox{$\mathbf{W}$}}
\def\ttunit{\mathtt{1}}
\def\tta{\mathtt{a}}
\def\ttb{\mathtt{b}}
\def\ttc{\mathtt{c}}
\def\ttd{\mathtt{d}}
\def\ttx{\mathtt{x}}
\def\tty{\mathtt{y}}
\def\ttz{\mathtt{z}}
\def\resp{\hbox{\emph{resp.} }}
\newcommand{\colorM}[1]{{\color{blue} #1}}
\newcommand{\itemM}[1]{\colorS{\item[M:] #1}}
\newcommand{\commM}[1]{{\marginpar{\tiny \colorM{#1}}}}
\newcommand{\colorM}[1]{}
\newcommand{\itemM}[1]{}
\newcommand{\commM}[1]{}
\definecolor{darkgreen}{rgb}{0.2,0.55,0}
\begin{document}

\maketitle

\begin{abstract} We develop an effective and natural approach to interpret any
semigroup admitting a special language of greedy normal forms as an automaton semigroup,
namely the semigroup generated by a Mealy automaton encoding the behaviour of such a language
of greedy normal forms under one-sided multiplication.
The framework embraces many of the well-known classes of (automatic) semigroups:
free semigroups, free commutative semigroups, trace or divisibility monoids,
braid or Artin--Tits or Krammer or Garside monoids, Baumslag--Solitar semigroups, etc.
Like plactic monoids or Chinese monoids,
some neither left- nor right-cancellative automatic semigroups are also investigated,
as well as some residually finite variations of the bicyclic monoid. 
It provides what appears to be the first known connection from a class of automatic semigroups
to a class of automaton semigroups. 
It is worthwhile noting that, %in all these cases,
"being an automatic semigroup" and "being an automaton semigroup"
become dual properties in a very automata-theoretical sense.
Quadratic rewriting systems and associated tilings appear as the cornerstone of our construction.
\end{abstract}

%-----------------------------------------------------------------------------------------
%-----------------------------------------------------------------------------------------
%-----------------------------------------------------------------------------------------
%-----------------------------------------------------------------------------------------
\section{Introduction}
\label{sec-intro}

The half century long history of the interactions between (semi)group theory and automata theory
went through a pivotal decade from the mid-eighties to the mid-nineties.
Contemporaneously but independently, two new theories truly started to develop and thrive:
automaton (semi)groups on the one hand with the works of~Aleshin~\cite{aleshin,aleshin:free} and~Grigorchuk~\cite{grigorch:burnside,grigorch:degrees} and the book~\cite{nekrash:self-similar},
and automatic (semi)groups on the other hand with the work of~Cannon and~Thurston and the book~\cite{EpsteinWord}.
We refer to~\cite{silva} for a clear and short survey on the known interactions between groups and automata.
A deeper and more extended survey by~Bartholdi and~Silva can be found out in two chapters~\cite{BS10,BS10b} of the forthcoming \textsf{AutoMathA} handbook.
We can refer to~\cite{Cain,McCune} for automaton semigroups and to~\cite{CRRT,HT} for automatic semigroups.

%\medbreak 
\subparagraph*{Remote siblings}
As their very name indicates, automaton (semi)groups and automatic (semi)groups share a same defining object:
the \emph{automaton} or the letter-to-letter transducer in this case.
Beyond this common origin, these two topics until now happened to remain largely distant both in terms of community and in terms of tools or results.
Typically, any paper on one or the other topic used to contain a sentence like "it should be emphasised that, despite their similar names,
the notions of automaton (semi)groups are entirely separate from the notions of automatic (semi)groups".  
This was best evidenced by the above-mentioned valuable handbook chapter~\cite{BS10} which splits into exactly two sections
(automatic groups and automaton groups) without any reference between one and the other appearing explicitly.

\subparagraph*{Related open problems}
A significant problem is to recognise whether a given (semi)group is self-similar, that is, an automaton (semi)group.
Amongst the thirty-odd listed problems from~\cite{aim}, we can pick the one with the number~1.1:
\begin{quote}
\vspace*{-2.5pt}
{\sffamily\bfseries\raggedright Problem A.} It seems quite difficult to show whether a given group is self-similar. Are Gromov hyperbolic groups self-similar?
Find obstructions to self-similarity.
\vspace*{-2.5pt}
\end{quote}
Amongst the unsolved problems in group theory from the Kourovka Notebook \cite{kou},
the one (with number~16.84) asked by~Sushchanskii (see also~\cite{lmos}) can be formulated as follows:
\begin{quote}
\vspace*{-2.5pt}
{\sffamily\bfseries\raggedright Problem B.} Is the $n$-strand braid group~$\BB_n$ a subgroup of some automaton group? 
\vspace*{-2.5pt}
\end{quote}
All these questions can be meaningfully rephrased in terms of semigroups or monoids.

\begin{figure}%[ht]%
\centering
\begin{tikzpicture}[scale=.605,yscale=1.05]
	\node[thick,scale=7.4,opacity=.15,black] at (8,1) {groups};
	\node[thick,scale=6.2,opacity=.15,black] at (8,-4.45) {semigroups};
	\draw[rounded corners=15pt,draw opacity=0,fill=violet,fill opacity=.2] (-3.5,-8) rectangle (12.5,4);		%automatic_sem
	\draw[rounded corners=15pt,draw opacity=0,fill=violet,fill opacity=.3] (-3.5,-1) rectangle (12.5,4);		%automatic
	\draw[rounded corners=15pt,draw opacity=0,fill=orange,fill opacity=.25] (3.5,-8) rectangle (19.5,4);		%automaton_sem
	\draw[rounded corners=15pt,draw opacity=0,fill=orange,fill opacity=.4] (3.5,-1) rectangle (19.5,4);		%automaton
	\node[thick,scale=2,draw opacity=.5,violet] at (3.5,5) {automatic};
	\node[thick,scale=2,draw opacity=1,orange] at (12.5,5) {automaton};
		\node (hyperb)	at (0,2.25) {hyperbolic groups};
		\node (hyperbE)	[right of=hyperb,node distance=9mm] {};
		\node (hyperbNE)	[above of=hyperbE,node distance=1mm] {};
			\node[white,scale=.6] (kourovka)	at (0.3,3.60) {AIM Self-similar groups};	%AIM workshop
			\node[white,scale=.6] (kourovka)	at (0.3,3.25) {\& conformal dynamics \cite{aim}};
		\begin{scope}[xshift=80mm]
		\node (fin)		at (0,3) {\hspace*{21pt}finite groups\hspace*{21pt}};
		\node (orbi) 	at (0,2) {$\pres{~\tta,\ttb:[\tta,\ttb]^2~}$};
		\end{scope}
	\path[->,>=latex,densely dotted,thick] (hyperbNE) edge[white,bend left=40] node[white,pos=.4,above]{\scriptsize\(?\)} (fin.south west);
		\node (artin)	at (0,0.75) {some Artin groups};
		\node (artinE)	[right of=artin,node distance=9mm] {};
		\node (artinSE)	[below of=artinE,node distance=1mm] {};
			\node[white,scale=.6] (kourovka)	at (0.3,-.2) {Kourovka notebook \cite{kou}};		%Kourovka
		\begin{scope}[xshift=80mm]
		\node (bsmm) 	at (0,1) {$\pres{~\tta,\ttb:\tta\ttb^m=\ttb^m\tta~}$};
		\node (free) 	at (0,0) {~~free (abelian) groups~~};
		\end{scope}
	\path[->,>=latex,densely dotted,thick] (artinSE) edge[white,bend right=40] node[thick,pos=.4,below]{\scriptsize\(?\)} (free.north west);
	\begin{scope}[xshift=160mm]
		\node (grig)	at (0,3) {Grigorchuk groups};
		\node (gsid)	at (0,2) {Gupta--Sidki groups};
			\node (bsolv)	at (0,1) {$\pres{~\tta,\ttb:\tta\ttb^m=\ttb\tta~}$};
	\end{scope}
	\begin{scope}[yshift=-70mm]
		\node (bicy)	at (0,3.85) {bicyclic monoid\vphantom{g}};
		\begin{scope}[xshift=80mm]
			\node (sfin)	at (0,4.75) {finite semigroups};
			\node (sfree) 	at (0,3.85) {free (abelian) semigroups};
		\end{scope}
	\begin{scope}[xshift=80mm]
		\node (sartin) 	at (0,2.95) {Artin or Garside monoids\vphantom{g}};
		\node (bsmn) 	at (0,2.05) {Baumslag--Solitar monoids};%
		\node (kram) 	at (0,1.15) {Artin--Krammer monoids\vphantom{g}};
		\node (plactic) 	at (0,0.25) {plactic or Chinese monoids};
		\end{scope}
		\node (chin)	at (0,1.15) {};
		\node (chinE)	[right of=chin,node distance=9mm] {};
		\node (chinSE)	[below of=chinE,node distance=1mm] {};
		\path[->,>=latex,densely dotted,very thick] (chinSE) edge[white,bend right=40]
				node[white,pos=.45,below]{\small via Theorem~\ref{thm-main}} (plactic.west);
		\draw[rounded corners=10pt,white,thick] (4.3,3.5) rectangle (11.7,-0.3);
	\end{scope}
\end{tikzpicture}
\caption{The big picture: comparing the classes of automatic \emph{vs} automaton (semi)groups.}%
\label{fig-picture}
\end{figure}

\subparagraph*{Our contributions}
The aim here is to establish a possible connection between being an automatic semigroup and being an automaton semigroup.
Preliminary observations are that these classes intersect non trivially and that neither is included in the other (see Figure~\ref{fig-picture}).
Like the Grigorchuk group for instance, many automaton groups are infinite torsion groups, hence cannot be automatic groups.
By contrast, it is an open question whether every automatic group is an automaton group. The latter is related to the question
whether every automatic group is residually finite, which remains open despite the works by~Wise~\cite{Wise} and~Elder~\cite{ElderPhD}.
Like the bicyclic monoid, some automatic semigroups are not residually finite, hence cannot be automaton semigroups (see~\cite{Cain} for instance).
As for the intersection, we know that at least finite semigroups, free semigroups
(of rank at least~2, see~\cite{BroughCain15,BroughCain16}),
free abelian semigroups happen to be both automatic semigroups and automaton semigroups.

\smallbreak We propose here a new and natural way to interpret algorithmically each semigroup from a wide class of automatic semigroups---encompassing all the above-mentioned classes---as an automaton semigroup (Theorem~\ref{thm-main}). Furthermore, it is worthwhile noting that, in all these cases, "being an automatic semigroup" and "being an automaton semigroup" become dual properties in a very automata-theoretical sense (Corollary~\ref{cor-duality}).

\smallbreak Occurring as the very first bridge between two hitherto irreconcilable research areas, Theorem~\ref{thm-main} allows us to also provide a (more than) positive answer to the monoidal version of Problem~B. While the $n$-strand braid monoid~$\BB^{\ttunit}_{\hspace*{-1pt}n+}$ is the paradigmatic example of an automatic monoid from~\cite[Chapter~9]{EpsteinWord}, Theorem~\ref{thm-main} implies that $\BB^{\ttunit}_{\hspace*{-1pt}n+}$ is (not only a submonoid of) an automaton monoid as well. From this significant milestone arise various new questions, that will hopefully swarm into the both areas.

\subparagraph*{Organisation}
The structure of the paper is as follows. As a simple preliminary, Section~\ref{sec-prelim} illustrates in a deliberately informal manner
how a single Mealy automaton can be used in order to define both self-similar structures and automatic structures
(via a principle of duality).
In Section~\ref{sec-mealy}, we set up the notations for Mealy automata and recall necessary notions of dual automaton, cross-diagram, and self-similar structure. In Section~\ref{sec-garside}, we recall basics about normal forms and automatic structures, and we give necessary notions
of quadratic normalisation, square-diagram, and Garside family.
Section~\ref{sec-main} is devoted to our main results (Theorem~\ref{thm-main} and Corollary~\ref{cor-duality}),
while Section~\ref{sec-ex} finally gathers several carefully selected examples, counterexamples, and open problems.
Proofs and comments omitted due to space constraints have been put into a clearly marked appendix.
 
%-----------------------------------------------------------------------------------------
%-----------------------------------------------------------------------------------------
%-----------------------------------------------------------------------------------------
%-----------------------------------------------------------------------------------------
\section{A preliminary example}\label{sec-prelim}

As their very name indicates, automaton (semi)groups and automatic (semi)groups share a same defining object.
In both cases, a \emph{Mealy automaton} (see Definition~\ref{def-mealy}) basically transforms words into words.

\begin{figure}[h]
\centering
\scalebox{1}{
\begin{tikzpicture}[->,>=latex,node distance=25mm]
	  \node[etatDualDual] (one) {\(0\)};
	  \node[etatDualDual] (two)	[below right=22.5mm and 13mm of one] {\(1\)};
	  \node[etatDualDual] (thr) 	[below left=22.5mm and 13mm of one] {\(\overline{1}\)};
	\path (one) edge [bend left=12] 	node[label,above right=.5pt,inner sep=1pt,rounded corners]{\(\bone|\bzer\)} (two);
	\path (two) edge [bend left=12] 	node[label,below left=.5pt,inner sep=1pt,rounded corners]{\(\bone|\bone\)} (one);
	\path (two) edge [bend left=12] 	node[label,below=1pt,inner sep=1pt,rounded corners]{\(\bzer|\bone\)} (thr);
	\path (thr) edge [bend left=12] 	node[label,above=1pt,inner sep=1pt,rounded corners]{\(\bzer|\bmon\)} (two);
	\path (one) edge [bend left=12] 	node[label,below right=.5pt,inner sep=1pt,rounded corners]{\(\bmon|\bzer\)} (thr);
	\path (thr) edge [bend left=12] 	node[label,above left=.5pt,inner sep=1pt,rounded corners]{\(\bmon|\bmon\)} (one);
	\path (one) edge [loop above=15] 	node[label,above=3pt,inner sep=1pt,rounded corners,align=center]{\(\bzer|\bzer\)} (one);
	\path (two) edge [loop right=15] 	node[label,right=3pt,inner sep=1pt,rounded corners,align=center]{\(\bmon|\bzer\)} (two);
	\path (thr) edge [loop left=15] 	node[label,left=3pt,inner sep=1pt,rounded corners,align=center]{\(\bone|\bzer\)} (thr);
\begin{scope}[xshift=64mm,yshift=-13mm]
	\node[etatPPrimal] (one) {\(0\)};
	\node[etatPPrimal] (two) [right of=one] {\(1\)};
	\node[etatPPrimal] (thr) [left of=one] {\(\overline{1}\)};
	\path (one) edge [bend left] 	node[label,above=1pt,inner sep=1pt,rounded corners]{\color{couleurLettre}\(\cone|\cmon\)} (two);
	\path (two) edge [bend left] 	node[label,below=1pt,inner sep=1pt,rounded corners,align=center]
							{\color{couleurLettre}\(\czer|\cone\)\\\color{couleurLettre} \(\cmon|\cmon\)} (one);
	\path (one) edge [bend left] 	node[label,below=1pt,inner sep=1pt,rounded corners]{\color{couleurLettre}\(\cmon|\cone\)} (thr);
	\path (thr) edge [bend left] 	node[label,above=1pt,inner sep=1pt,rounded corners,align=center]
							{\color{couleurLettre}\(\czer|\cmon\)\\\color{couleurLettre} \(\cone|\cone\)} (one);
	\path (one) edge [loop above] 	node[label,above=3pt,inner sep=1pt,rounded corners,align=center]{\color{couleurLettre}\(\czer|\czer\)} (one);
	\path (two) edge [loop right] 	node[label,right=3pt,inner sep=1pt,rounded corners,align=center]{\color{couleurLettre}\(\cone|\czer\)} (two);
	\path (thr) edge [loop left] 		node[label,left=3pt,inner sep=1pt,rounded corners,align=center]{\color{couleurLettre}\(\cmon|\czer\)} (thr);
\end{scope}
\end{tikzpicture}
}
\caption{Two (dual) Mealy automata: the left-hand one computing the division by~3 in base~2 (most significant digit first)
\emph{vs} the right-hand one computing the multiplication by~2 in base~3 (least significant digit first).}%
\label{fig-avizienis}
\end{figure}

\medbreak
The Mealy automaton displayed on Figure~\ref{fig-avizienis} (left) is some signed-digit version of one of the most classical examples of a transducer
(see~\cite[Prologue]{saka} for a delightfully alternate history). Signed-digit numeration systems \cite{Avizienis61,Cauchy1840} are not the topic, now they provide a special opportunity to illustrate our purpose.
When starting from the state~$\color{couleurLettre}0$ and reading any binary word or any $\{\cmon,\czer,\cone\}$-word~$\word{u}$ (most significant digit first),
it computes the division by~3 in base~2  by outputting the (quotient) $\{\cmon,\czer,\cone\}$-word~$\word{v}$ (most significant digit first) satisfying
\begin{equation}
(\word{u})_2=3\times(\word{v})_2+f\label{eq-division}\tag{\lstinline{\%}}
\end{equation}
where the (remainder)~$f\in\{{\color{couleurLettre}\overline{1}},{\color{couleurLettre}0},{\color{couleurLettre}1}\}$ corresponds to the arrival state of the run,
and where $(\word{w})_b$ denotes by convention the number that is represented by~$\word{w}$ in base~$b$.

\medbreak
For the current preliminary section, let us now focus on this basic example and consider the two different viewpoints described as follows.
On the one hand, it seems natural to consider the set of those functions (from $\{\cmon,\czer,\cone\}$-words to $\{\cmon,\czer,\cone\}$-words) thus associated with each state,
then to compose them with each other, and finally to study the (semi)group which is generated by such functions.

For instance, the function~$\word{u}\mapsto\word{v}$ associated with the state~$\color{couleurLettre}0$ (satisfying Equation~\eref{eq-division}) can be squared, cubed, and so on,
to obtain functions, which can be again interpreted as the division by~$9, 27, \ldots$ (in base~2 with most significant digit first), or can be composed with the functions induced by the other two states. The generated semigroup happens to be the rank~3 free semigroup~$\{{\color{couleurLettre}\overline{1}},{\color{couleurLettre}0},{\color{couleurLettre}1}\}^+$
(provided that the three states and their induced functions are identified).
This simple idea coincides with the notion of automaton (semi)groups or self-similar structures (see Definition~\ref{def-automatonsem}).
With this crucial standpoint, we can compute (semi)group operations by manipulating the corresponding Mealy automaton (see~\cite{bartholdi:fr,BS10,KMP12,muntyan_s:automgrp}),
and hopefully foresee some combinatorial and dynamical properties by examining its shape
(see~\cite{spol,BZsolvable,bondarenko2_sz:conjugacy13,DGKPR16,gillibert:finiteness14,godin_kp:jir,klimann:finiteness,kpsDLT,kpsORB,SilvaSteinberg} for instance).

\medbreak
On the other hand, it may be also natural to simply iterate the runs. The starting language is again over the (input/output) alphabet,
now the images of the transformations are some languages over the stateset.

For instance, restarting again from the state~$\color{couleurLettre}0$,
the previously output word~$\word{v}$ (satisfying Equation~\eref{eq-division}) can be read in turn, and so on.
The successive arrival states can be then collected and concatenated in order to obtain here the decomposition 
of~$(\word{u})_2$ in base~3 (least significant digit first). The whole process is thus inherently quadratic.

This second idea coincides with the fundamental notion of automatic (semi)groups (see Definition~\ref{def-automaticsem}), for which Mealy automata can compute normal forms.

\medbreak We give some so-called \emph{cross-diagrams} on Figures~\ref{fig-cross-selfsim} and~\ref{fig-cross-thurston}. The ways these tilings can be organised and read illustrate the dual facets: self-similarity \emph{vs} automaticity.

\begin{figure}[h]
\begin{center}
\begin{tikzpicture}[thick,node distance=14mm]
\begin{scope}[xshift=0mm,yshift=0mm]
  \node (arrA1)									{};
  \node (arrA2) [node distance=83mm,right of=arrA1]		{};
  \draw (arrA1) edge[->,line width=12pt,>=triangle 90 cap,violet!30] (arrA2);
\end{scope}
\begin{scope}[xshift=-2.55mm,yshift=0mm]
  \node (arrA1)									{};
  \node (arrA2) [node distance=83mm,right of=arrA1]		{};
  \draw (arrA1) edge[->,line width=12pt,>=triangle 90 cap,violet!10] (arrA2);
\end{scope}
\begin{scope}[xshift=-5mm,yshift=0mm]
  \node (arrA1)									{};
  \node (arrA2) [node distance=83mm,right of=arrA1]		{};
  \draw (arrA1) edge[->,line width=12pt,>=triangle 90 cap,violet!30] (arrA2);
\end{scope}
\begin{scope}[xshift=-0mm,yshift=-42mm]
  \node (arrA1)									{};
  \node (arrA2) [node distance=83mm,right of=arrA1]		{};
  \draw (arrA1) edge[->,line width=12pt,>=triangle 90 cap,violet!30] (arrA2);
\end{scope}
\begin{scope}[xshift=-2.5mm,yshift=-42mm]
  \node (arrA1)									{};
  \node (arrA2) [node distance=83mm,right of=arrA1]		{};
  \draw (arrA1) edge[->,line width=12pt,>=triangle 90 cap,violet!10] (arrA2);
\end{scope}
\begin{scope}[xshift=-5mm,yshift=-42mm]
  \node (arrA1)									{};
  \node (arrA2) [node distance=83mm,right of=arrA1]		{};
  \draw (arrA1) edge[->,line width=12pt,>=triangle 90 cap,violet!30] (arrA2);
\end{scope}
  \node (gridA1) 			  {\(\czer\)};
  \node (gridA2) [right of=gridA1] {\(\czer\)};
  \node (gridA3) [right of=gridA2] {\(\cone\)};
  \node (gridA4) [right of=gridA3] {\(\czer\)};
  \node (gridA5) [right of=gridA4] {\(\cone\)};
  \node (gridA6) [right of=gridA5] {\(\cone\)};
  \node (gridB0) [below left of=gridA1,node distance=9.9mm,couleurLettre]{0};
  \node (gridB1) [right of=gridB0,couleurLettre] {0};
  \node (gridB2) [right of=gridB1,couleurLettre] {0};
  \node (gridB3) [right of=gridB2,couleurLettre] {1};
  \node (gridB4) [right of=gridB3,couleurLettre] {$\overline{1}$};%
  \node (gridB5) [right of=gridB4,couleurLettre] {$\overline{1}$};
  \node (gridB6) [right of=gridB5,couleurLettre] {$\overline{1}$};
  \node (gridC1) [below right of=gridB0,node distance=9.9mm]{\(\czer\)};
  \node (gridC2) [right of=gridC1] {\(\czer\)};
  \node (gridC3) [right of=gridC2] {\(\czer\)};
  \node (gridC4) [right of=gridC3] {\(\cone\)};
  \node (gridC5) [right of=gridC4] {\(\czer\)};
  \node (gridC6) [right of=gridC5] {\(\czer\)};
  
  \node (gridD0) [below left of=gridC1,node distance=9.9mm,couleurLettre]{$\overline{1}$};
  \node (gridD1) [right of=gridD0,couleurLettre]{1};
  \node (gridD2) [right of=gridD1,couleurLettre]{$\overline{1}$};
  \node (gridD3) [right of=gridD2,couleurLettre]{1};
  \node (gridD4) [right of=gridD3,couleurLettre]{0};%
  \node (gridD5) [right of=gridD4,couleurLettre]{0};
  \node (gridD6) [right of=gridD5,couleurLettre]{0};
  \node (gridE1) [below right of=gridD0,node distance=9.9mm]{\(\cmon\)};
  \node (gridE2) [right of=gridE1] {\(\cone\)};
  \node (gridE3) [right of=gridE2] {\(\cmon\)};
  \node (gridE4) [right of=gridE3] {\(\cone\)};
  \node (gridE5) [right of=gridE4] {\(\czer\)};
  \node (gridE6) [right of=gridE5] {\(\czer\)};

  \node (gridF0) [below left of=gridE1,node distance=9.9mm,couleurLettre]{1};
  \node (gridF1) [right of=gridF0,couleurLettre]{1};
  \node (gridF2) [right of=gridF1,couleurLettre]{0};
  \node (gridF3) [right of=gridF2,couleurLettre]{$\overline{1}$};
  \node (gridF4) [right of=gridF3,couleurLettre]{$\overline{1}$};
  \node (gridF5) [right of=gridF4,couleurLettre]{1};
  \node (gridF6) [right of=gridF5,couleurLettre]{$\overline{1}$};
  \node (gridG1) [below right of=gridF0,node distance=9.9mm]{\(\czer\)};
  \node (gridG2) [right of=gridG1] {\(\cone\)};
  \node (gridG3) [right of=gridG2] {\(\czer\)};
  \node (gridG4) [right of=gridG3] {\(\czer\)};
  \node (gridG5) [right of=gridG4] {\(\cmon\)};
  \node (gridG6) [right of=gridG5] {\(\cone\)};

  \begin{scope}[->,>=latex]
	\draw (gridA1) edge (gridC1);
	\draw (gridA2) edge (gridC2);
	\draw (gridA3) edge (gridC3);
	\draw (gridA4) edge (gridC4);
	\draw (gridA5) edge (gridC5);
	\draw (gridA6) edge (gridC6);
	\draw (gridB0) edge (gridB1);
	\draw (gridB1) edge (gridB2);
	\draw (gridB2) edge (gridB3);
	\draw (gridB3) edge (gridB4);
	\draw (gridB4) edge (gridB5);
	\draw (gridB5) edge (gridB6);
	\draw (gridC1) edge (gridE1);
	\draw (gridC2) edge (gridE2);
	\draw (gridC3) edge (gridE3);
	\draw (gridC4) edge (gridE4);
	\draw (gridC5) edge (gridE5);
	\draw (gridC6) edge (gridE6);
	\draw (gridD0) edge (gridD1);
	\draw (gridD1) edge (gridD2);
	\draw (gridD2) edge (gridD3);
	\draw (gridD3) edge (gridD4);
	\draw (gridD4) edge (gridD5);
	\draw (gridD5) edge (gridD6);
	\draw (gridE1) edge (gridG1);
	\draw (gridE2) edge (gridG2);
	\draw (gridE3) edge (gridG3);
	\draw (gridE4) edge (gridG4);
	\draw (gridE5) edge (gridG5);
	\draw (gridE6) edge (gridG6);
	\draw (gridF0) edge (gridF1);
	\draw (gridF1) edge (gridF2);
	\draw (gridF2) edge (gridF3);
	\draw (gridF3) edge (gridF4);
	\draw (gridF4) edge (gridF5);
	\draw (gridF5) edge (gridF6);
  \end{scope}
\end{tikzpicture}
\end{center}
\vspace*{-10pt}
\caption{A cross-diagram for self-similarity: the action induced by the word~$\color{couleurLettre}\czer\cmon\cone$ (left here) on the word~$\bzer\bzer\bone\bzer\bone\bone$ (top) gives the word~$\bzer\bone\bzer\bzer\bmon\bone$ (bottom). The actions can be composed, just as the cross-diagrams can be stacked. The self-similarity context requires actually to consider actions on right-infinite words.}
\label{fig-cross-selfsim}
\end{figure}

\begin{figure}[h]
\begin{center}
\begin{tikzpicture}[thick,node distance=14mm]
\begin{scope}[xshift=-8mm,yshift=0mm]
  \node (arrA1)									{};
  \node (arrA2) [node distance=83mm,right of=arrA1]		{};
  \draw (arrA2) edge[->,line width=12pt,>=triangle 90 cap,violet!30] (arrA1);
\end{scope}
\begin{scope}[xshift=77mm,yshift=-2mm]
  \node (arrA1)									{};
  \node (arrA2) [node distance=41mm,below of=arrA1]		{};
  \draw (arrA1) edge[->,line width=12pt,>=triangle 90 cap,violet!30] (arrA2);
\end{scope}
  \node (gridA1) 			  {\(\czer\)};
  \node (gridA2) [right of=gridA1] {\(\czer\)};
  \node (gridA3) [right of=gridA2] {\(\cone\)};
  \node (gridA4) [right of=gridA3] {\(\czer\)};
  \node (gridA5) [right of=gridA4] {\(\cone\)};
  \node (gridA6) [right of=gridA5] {\(\cone\)};
  \node (gridB0) [below left of=gridA1,node distance=9.9mm,couleurLettre]{0};
  \node (gridB1) [right of=gridB0,couleurLettre] {0};
  \node (gridB2) [right of=gridB1,couleurLettre] {0};
  \node (gridB3) [right of=gridB2,couleurLettre] {1};
  \node (gridB4) [right of=gridB3,couleurLettre] {$\overline{1}$};%
  \node (gridB5) [right of=gridB4,couleurLettre] {$\overline{1}$};
  \node (gridB6) [right of=gridB5,couleurLettre] {$\overline{1}$};
  \node (gridC1) [below right of=gridB0,node distance=9.9mm]{\(\czer\)};
  \node (gridC2) [right of=gridC1] {\(\czer\)};
  \node (gridC3) [right of=gridC2] {\(\czer\)};
  \node (gridC4) [right of=gridC3] {\(\cone\)};
  \node (gridC5) [right of=gridC4] {\(\czer\)};
  \node (gridC6) [right of=gridC5] {\(\czer\)};
  
  \node (gridD0) [below left of=gridC1,node distance=9.9mm,couleurLettre]{0};
  \node (gridD1) [right of=gridD0,couleurLettre]{0};
  \node (gridD2) [right of=gridD1,couleurLettre]{0};
  \node (gridD3) [right of=gridD2,couleurLettre]{0};
  \node (gridD4) [right of=gridD3,couleurLettre]{1};%
  \node (gridD5) [right of=gridD4,couleurLettre]{$\overline{1}$};
  \node (gridD6) [right of=gridD5,couleurLettre]{1};
  \node (gridE1) [below right of=gridD0,node distance=9.9mm]{\(\czer\)};
  \node (gridE2) [right of=gridE1] {\(\czer\)};
  \node (gridE3) [right of=gridE2] {\(\czer\)};
  \node (gridE4) [right of=gridE3] {\(\czer\)};
  \node (gridE5) [right of=gridE4] {\(\cone\)};
  \node (gridE6) [right of=gridE5] {\(\cmon\)};

  \node (gridF0) [below left of=gridE1,node distance=9.9mm,couleurLettre]{0};
  \node (gridF1) [right of=gridF0,couleurLettre]{0};
  \node (gridF2) [right of=gridF1,couleurLettre]{0};
  \node (gridF3) [right of=gridF2,couleurLettre]{0};
  \node (gridF4) [right of=gridF3,couleurLettre]{0};
  \node (gridF5) [right of=gridF4,couleurLettre]{1};
  \node (gridF6) [right of=gridF5,couleurLettre]{1};
  \node (gridG1) [below right of=gridF0,node distance=9.9mm]{\(\czer\)};
  \node (gridG2) [right of=gridG1] {\(\czer\)};
  \node (gridG3) [right of=gridG2] {\(\czer\)};
  \node (gridG4) [right of=gridG3] {\(\czer\)};
  \node (gridG5) [right of=gridG4] {\(\czer\)};
  \node (gridG6) [right of=gridG5] {\(\czer\)};

  \begin{scope}[->,>=latex]
	\draw (gridA1) edge (gridC1);
	\draw (gridA2) edge (gridC2);
	\draw (gridA3) edge (gridC3);
	\draw (gridA4) edge (gridC4);
	\draw (gridA5) edge (gridC5);
	\draw (gridA6) edge (gridC6);
	\draw (gridB0) edge (gridB1);
	\draw (gridB1) edge (gridB2);
	\draw (gridB2) edge (gridB3);
	\draw (gridB3) edge (gridB4);
	\draw (gridB4) edge (gridB5);
	\draw (gridB5) edge (gridB6);
	\draw (gridC1) edge (gridE1);
	\draw (gridC2) edge (gridE2);
	\draw (gridC3) edge (gridE3);
	\draw (gridC4) edge (gridE4);
	\draw (gridC5) edge (gridE5);
	\draw (gridC6) edge (gridE6);
	\draw (gridD0) edge (gridD1);
	\draw (gridD1) edge (gridD2);
	\draw (gridD2) edge (gridD3);
	\draw (gridD3) edge (gridD4);
	\draw (gridD4) edge (gridD5);
	\draw (gridD5) edge (gridD6);
	\draw (gridE1) edge (gridG1);
	\draw (gridE2) edge (gridG2);
	\draw (gridE3) edge (gridG3);
	\draw (gridE4) edge (gridG4);
	\draw (gridE5) edge (gridG5);
	\draw (gridE6) edge (gridG6);
	\draw (gridF0) edge (gridF1);
	\draw (gridF1) edge (gridF2);
	\draw (gridF2) edge (gridF3);
	\draw (gridF3) edge (gridF4);
	\draw (gridF4) edge (gridF5);
	\draw (gridF5) edge (gridF6);
  \end{scope}
\end{tikzpicture}
\end{center}
\vspace*{-10pt}
\caption{A cross-diagram for automaticity: according to Thurston, repeated runs starting from the unit state allows to collect the successives symbols of the normal form. Reading the word~$001011$ from the state~$\color{couleurLettre}\czer$ leads to the state~$\color{couleurLettre}\cmon$ and outputs the word~$000100$.
Restarting with the just-outputting word~$000100$ from the state~$\color{couleurLettre}\czer$ leads to the state~$\color{couleurLettre}\cone$ and outputs the word~$00001\overline{1}$. The runs can be iterated, just as the cross-diagrams can be stacked: the normal form appears on the side (right here). Considering the initial word~$001011$ (as some base~2 representation of eleven, most significant digit first), we find its normal form~$\color{couleurLettre}\cmon\cone\cone$ (in base~3, least significant digit first).}
\label{fig-cross-thurston}
\end{figure}

To conclude this preliminary section, let us mention that states and letters of any Mealy automaton play a symmetric role,
and that several properties can be beneficially derived from the so-called \emph{dual (Mealy) automaton}, obtained
by exchanging the stateset and the alphabet (see~\cite{cant} for an overview).

For instance, Figure~\ref{fig-avizienis} displays a pair of dual automata.
While the left-hand automaton allows to compute the division by~3 in base~2 (most significant digit first) as we have seen just above,
its dual automaton (right) essentially computes the multiplication by~2 in base~3 (least significant digit first).
More precisely, its state~$0$ induces the function~$x\mapsto 2\times x$ on~$\mathbb{Z}$,
while its states~$\overline{1}$ and~$1$ induce the functions~$x\mapsto 2\times x-1$ and~$x\mapsto 2\times x+1$ respectively:
they together generate the semigroup~$\press{~\overline{1},0,1:01=1\overline{1},0\overline{1}=\overline{1}1~}$.
Let us mention that the induced functions happen to be invertible and to generate a group
which is isomorphic with the so-called Baumslag--Solitar
group~$\BS(2,1)=\pres{~\tta,\ttb:\tta\ttb^2=\ttb\tta~}$ (see Figure~\ref{fig-picture}, Example~\ref{ex-bsThreeTwo}, and~\cite{pHdR}).
\nocite{BZsolvable}

Besides, such a Mealy automaton (right) can be used to compute the base~2 from the base~3 representation 
of the fractional part of any rational number, by iterating runs as explained above.
For instance, finitely iterated runs from the state~$0$ and the initial word~$\color{couleurLettre}00001\overline{1}$
produce the infinite word~$(0\overline{1}0010)^\omega$,
both words representing (the fractional part of) the rational~$-\frac{2}{9}$ 
in base~3 (least significant digit first) and in base~2 (most significant digit first) respectively,
see Figure~\ref{fig-cross-thurston-dual}.

\begin{figure}[h]
\begin{center}
\begin{tikzpicture}[thick,node distance=14mm]
\begin{scope}[xshift=-8mm,yshift=-42mm]
  \node (arrA1)									{};
  \node (arrA2) [node distance=83mm,right of=arrA1]		{};
  \node (arrA2bis) [node distance=1.5mm,left of=arrA2]		{};
  \node (arrA3) [node distance=13mm,right of=arrA2]		{};
  \draw (arrA2) edge[->,line width=12pt,>=triangle 90 cap,violet!30] (arrA1);
  \draw (arrA2bis) edge[line width=12pt,violet!30,dash pattern=on 1mm off 1mm] (arrA3);
\end{scope}
\begin{scope}[xshift=-7mm,yshift=-2mm]
  \node (arrA1)									{};
  \node (arrA2) [node distance=41mm,below of=arrA1]		{};
  \draw (arrA1) edge[->,line width=12pt,>=triangle 90 cap,violet!30] (arrA2);
\end{scope}
  \node (gridA1) 			  {\(\czer\)};
  \node (gridA2) [right of=gridA1] {\(\czer\)};
  \node (gridA3) [right of=gridA2] {\(\czer\)};
  \node (gridA4) [right of=gridA3] {\(\czer\)};
  \node (gridA5) [right of=gridA4] {\(\czer\)};
  \node (gridA6) [right of=gridA5] {\(\czer\)};
  \node (gridB0) [below left of=gridA1,node distance=9.9mm,couleurLettre]{0};
  \node (gridB1) [right of=gridB0,couleurLettre] {0};
  \node (gridB2) [right of=gridB1,couleurLettre] {0};%------
  \node (gridB3) [right of=gridB2,couleurLettre] {0};
  \node (gridB4) [right of=gridB3,couleurLettre] {0};%
  \node (gridB5) [right of=gridB4,couleurLettre] {0};
  \node (gridB6) [right of=gridB5,couleurLettre] {0};
  \node (gridC1) [below right of=gridB0,node distance=9.9mm]{\(\czer\)};
  \node (gridC2) [right of=gridC1] {\(\czer\)};
  \node (gridC3) [right of=gridC2] {\(\czer\)};
  \node (gridC4) [right of=gridC3] {\(\czer\)};
  \node (gridC5) [right of=gridC4] {\(\czer\)};
  \node (gridC6) [right of=gridC5] {\(\czer\)};
  
  \node (gridD0) [below left of=gridC1,node distance=9.9mm,couleurLettre]{1};
  \node (gridD1) [right of=gridD0,couleurLettre]{$\overline{1}$};
  \node (gridD2) [right of=gridD1,couleurLettre]{1};%------
  \node (gridD3) [right of=gridD2,couleurLettre]{$\overline{1}$};
  \node (gridD4) [right of=gridD3,couleurLettre]{1};%
  \node (gridD5) [right of=gridD4,couleurLettre]{$\overline{1}$};
  \node (gridD6) [right of=gridD5,couleurLettre]{1};
  \node (gridE1) [below right of=gridD0,node distance=9.9mm]{\(\cone\)};
  \node (gridE2) [right of=gridE1] {\(\cmon\)};
  \node (gridE3) [right of=gridE2] {\(\cone\)};
  \node (gridE4) [right of=gridE3] {\(\cmon\)};
  \node (gridE5) [right of=gridE4] {\(\cone\)};
  \node (gridE6) [right of=gridE5] {\(\cmon\)};

  \node (gridF0) [below left of=gridE1,node distance=9.9mm,couleurLettre]{$\overline{1}$};
  \node (gridF1) [right of=gridF0,couleurLettre]{$\overline{1}$};
  \node (gridF2) [right of=gridF1,couleurLettre]{0};%------
  \node (gridF3) [right of=gridF2,couleurLettre]{1};
  \node (gridF4) [right of=gridF3,couleurLettre]{1};%
  \node (gridF5) [right of=gridF4,couleurLettre]{0};
  \node (gridF6) [right of=gridF5,couleurLettre]{$\overline{1}$};
  \node (gridG1) [below right of=gridF0,node distance=9.9mm]{\(\czer\)};
  \node (gridG2) [right of=gridG1] {\(\cmon\)};
  \node (gridG3) [right of=gridG2] {\(\czer\)};
  \node (gridG4) [right of=gridG3] {\(\czer\)};
  \node (gridG5) [right of=gridG4] {\(\cone\)};
  \node (gridG6) [right of=gridG5] {\(\czer\)};

  \begin{scope}[->,>=latex]
	\draw (gridA1) edge (gridC1);
	\draw (gridA2) edge (gridC2);
	\draw (gridA3) edge (gridC3);
	\draw (gridA4) edge (gridC4);
	\draw (gridA5) edge (gridC5);
	\draw (gridA6) edge (gridC6);
	\draw (gridB0) edge (gridB1);
	\draw (gridB1) edge (gridB2);
	\draw (gridB2) edge (gridB3);
	\draw (gridB3) edge (gridB4);
	\draw (gridB4) edge (gridB5);
	\draw (gridB5) edge (gridB6);
	\draw (gridC1) edge (gridE1);
	\draw (gridC2) edge (gridE2);
	\draw (gridC3) edge (gridE3);
	\draw (gridC4) edge (gridE4);
	\draw (gridC5) edge (gridE5);
	\draw (gridC6) edge (gridE6);
	\draw (gridD0) edge (gridD1);
	\draw (gridD1) edge (gridD2);
	\draw (gridD2) edge (gridD3);
	\draw (gridD3) edge (gridD4);
	\draw (gridD4) edge (gridD5);
	\draw (gridD5) edge (gridD6);
	\draw (gridE1) edge (gridG1);
	\draw (gridE2) edge (gridG2);
	\draw (gridE3) edge (gridG3);
	\draw (gridE4) edge (gridG4);
	\draw (gridE5) edge (gridG5);
	\draw (gridE6) edge (gridG6);
	\draw (gridF0) edge (gridF1);
	\draw (gridF1) edge (gridF2);
	\draw (gridF2) edge (gridF3);
	\draw (gridF3) edge (gridF4);
	\draw (gridF4) edge (gridF5);
	\draw (gridF5) edge (gridF6);
  \end{scope}
\end{tikzpicture}
\end{center}
\vspace*{-10pt}
\caption{A second cross-diagram for automaticity: according to Thurston, repeated runs starting from the unit state allows to collect the successives symbols of the normal form. Reading the (vertical) word~$\color{couleurLettre}\czer\cone\cmon$ from the (top) state~$0$ leads to the state~$0$ and outputs the (vertical) word~$\color{couleurLettre}\czer\cmon\cmon$.
Restarting with the just-outputting word~$\color{couleurLettre}\czer\cmon\cmon$ from the state~$0$ leads to the state~$\overline{1}$ and outputs the word~$\color{couleurLettre}\czer\cone\czer$. The runs can be iterated until completing a loop: the (eventually periodic) normal form appears below. Considering the initial word~$\color{couleurLettre}\czer\cone\cmon$ (as some base~2 representation of~$-\frac{2}{9}$, least significant digit first), we find its normal form~$(0\overline{1}0010)^\omega$ (in base~3, most significant digit first).}
\label{fig-cross-thurston-dual}
\end{figure}

\bigbreak
This innocuous example allows to illustrate the quite simple machineries associated both with automaton semigroups and with automatic semigroups.
It also aims to give an informal glimpse of their behaviours through the duality principle: for instance, division \emph{vs} multiplication, factor \emph{vs}
base, least \emph{vs} most significant digit first, integer part \emph{vs} fractional part.

%-----------------------------------------------------------------------------------------
%-----------------------------------------------------------------------------------------
%-----------------------------------------------------------------------------------------
%-----------------------------------------------------------------------------------------
\newpage
\section{Mealy automata and self-similar structures}\label{sec-mealy}

We first recall the formal definition of an automaton. Possible references are~\cite{nekrash:self-similar,BS10,Cain,McCune}.

\begin{definition}\label{def-mealy}
A {\em (finite, deterministic, and complete) automaton} is a
triple \(\bigl( Q,\Sigma,\tau = (\tau_i\colon Q\rightarrow Q )_{i\in \Sigma} \bigr)\),
where the \emph{stateset}~$Q$
and the \emph{alphabet}~$\Sigma$ are non-empty finite sets, and
where the~\(\tau_i\)'s
are functions.

A \emph{Mealy automaton} is a quadruple
\(\bigl( Q, \Sigma, \tau = (\tau_i\colon Q\rightarrow Q )_{i\in \Sigma},
\sigma = (\sigma_x\colon \Sigma\rightarrow \Sigma  )_{x\in Q} \bigr)\)
such that both~\((Q,\Sigma,\tau)\) and~\((\Sigma,Q,\sigma)\) are
automata.
\end{definition}

In other terms, a Mealy automaton is a complete, deterministic,
letter-to-letter transducer with the same input and output alphabet.

The graphical representation of a Mealy automaton is
standard, see Figures~\ref{fig-avizienis}, \ref{fig-bsOneZero}, and \ref{fig-braid}.

In a Mealy automaton $\aut{A}=(Q,\Sigma, \tau, \sigma)$, the sets $Q$ and
$\Sigma$ play dual roles. So we may consider the \emph{dual (Mealy)
  automaton} defined by
\(\dz(\aut{A}) = (\Sigma,Q, \sigma, \tau)\):
\begin{center}
\begin{tikzpicture}[thick,node distance=13mm]
	\node[draw,circle,text width=3mm,align=center,inner sep=1.2pt] (quad-n-transi-left) at (-1,-.8) {$x$};
	\node[draw,circle,text width=3mm,align=center,inner sep=1.2pt,node distance=18mm] (quad-n-transi-right) [right of=quad-n-transi-left] {$y$};
	\path[->,>=latex] (quad-n-transi-left) edge[bend left] node[align=center,above]{\(\!i\,|\,j\!\)} (quad-n-transi-right);
	\node (eq) at (1.7,-.8) {$ \in \aut{A}$};
	\node (eq) at (3.3,-.8) {$\iff$};
	\node (eq) at (8,-.8) {$ \in \dz(\aut{A})$.};
	\node[draw,circle,text width=3mm,align=center,inner sep=1.2pt] (quad-n-transi-left) at (5,-.8) {$i$};
	\node[draw,circle,text width=3mm,align=center,inner sep=1.2pt,node distance=18mm] (quad-n-transi-right) [right of=quad-n-transi-left] {$j$};
	\path[->,>=latex] (quad-n-transi-left) edge[bend left] node[align=center,above]{\(\!x\,|\,y\!\)} (quad-n-transi-right);
\end{tikzpicture}
\end{center}
We view~\(\aut{A}=(Q,\Sigma,\tau,\sigma)\) as an automaton with an input and an output tape, thus
defining mappings from input words over~$\Sigma$ to output words
over~$\Sigma$.
Formally, for~\(x\in Q\), the map~$\sigma_x\colon\Sigma^* \rightarrow \Sigma^*$,
extending~$\sigma_x\colon\Sigma \rightarrow \Sigma$, is defined recursively by:
\begin{equation*}\label{eq-rec-def}
\forall i \in \Sigma, \ \forall \word{s} \in \Sigma^*, \qquad
\sigma_x(i\word{s}) = \sigma_x(i)\sigma_{\tau_i(x)}(\word{s}) \:.
\end{equation*}

The above equation can be easier to understood when depicted by a \emph{cross-diagram} (see~\cite{AKLMP12}):
\begin{center}
\begin{tikzpicture}[thick,node distance=16mm]
	\node (01) at (.75,2) {$i$};
	\node (03) [node distance=22mm,right of=01] {$\word{s}$};
	\node (05) [node distance=20mm,right of=03] {};
	\node (10) at (0,1.25) {$x$};
	\node (12) [node distance=19mm,right of=10] {$\tau_i(x)$};
	\node (14) [node distance=25mm,right of=12] {$\tau_{\word{s}}(\tau_i(x))$};
	\node (21) [below of=01] {$\sigma_x(i)$};
	\node (23) [node distance=22mm,right of=21] {$ \sigma_{\tau_i(x)}(\word{s})$};
	\path[->,>=latex]
		(01)	edge  (21)
		(10)	edge  (12)
		(03)	edge  (23)
		(12)	edge  (14);
\end{tikzpicture}
\end{center}
By convention, the image of the empty word is itself. 
The mapping~\(\sigma_x\) for each~$x\in Q$ is length-preserving and prefix-preserving.
We say that~\(\sigma_x\) is the \emph{production
function\/} associated with~\((\aut{A},x)\).
For~$\word{x}=x_1\cdots x_n \in Q^n$ with~$n>0$, set
\(\sigma_\word{x}\colon\Sigma^* \rightarrow \Sigma^*, \sigma_\word{x} = \sigma_{x_n}
\circ \cdots \circ \sigma_{x_1} \:\).
Denote dually by~\(\tau_i\colon Q^*\rightarrow Q^*,
i\in \Sigma\), the production functions associated with
the dual automaton
$\dz(\aut{A})$. For~$\word{s}=s_1\cdots s_n
\in \Sigma^n$ with~$n>0$, set~\(\tau_\word{s}\colon Q^* \rightarrow Q^*,
\ \tau_\word{s} = \tau_{s_n}\circ \cdots \circ \tau_{s_1}\).

\begin{definition}\label{def-automatonsem} The semigroup of mappings from~$\Sigma^*$ to~$\Sigma^*$ generated by
$\{\sigma_x, x\in Q\}$ is called the \emph{semigroup generated
  by~$\aut{A}$} and is denoted by~$\press{~\aut{A}~}$.
When~\(\aut{A}\) is invertible,
its production functions are
permutations on words of the same length and thus we may consider
the corresponding group instead; this group is the \emph{group generated
by~$\aut{A}$} and is denoted by~$\pres{~\aut{A}~}$.
A (semi)group is called an \emph{automaton (semi)group} whenever it can be generated by some Mealy automaton.
The term \emph{self-similar} is used as a synonym.
\end{definition}

%-----------------------------------------------------------------------------------------
%-----------------------------------------------------------------------------------------
%-----------------------------------------------------------------------------------------
%-----------------------------------------------------------------------------------------
\DeclareRobustCommand{\home}{{\rm(\scalebox{.06}{\rotatebox{-15}{
 \begin{tikzpicture}[thick]
	\fill[color=lightgray] (0,0) -- (-3,1.5) -- (-3,3) -- (0,4.5) -- (0,0);
	\fill[color=lightgray] (0,0) -- (3,3) -- (0,4.5) -- (0,0);
\end{tikzpicture}
}})}}

\DeclareRobustCommand{\unit}{{\rm\hspace*{1pt}\raisebox{-.3ex}{\scalebox{.08}{
 \begin{tikzpicture}[line width=7pt]
	\draw (0,3)	edge[color=lightgray,line width=3pt,densely dotted]
				node[text=black,fill=white,scale=7]{\hspace*{-3.3pt}$\ttunit$\hspace*{-3.3pt}}	(3,0);
	\draw[color=lightgray] (0,3) -- (3,3) -- (3,0) -- (0,0) -- (0,3)-- (3,3);
	\draw[color=black,cap=round] (3,0) -- (3,3);
\end{tikzpicture}
}}\hspace*{1pt}}}
\DeclareRobustCommand{\cunit}{{\rm(\hspace*{1pt}\raisebox{-.3ex}{\scalebox{.08}{
 \begin{tikzpicture}[line width=7pt]
	\draw (0,3)	edge[color=lightgray,line width=3pt,densely dotted]
				node[text=black,fill=white,scale=7]{\hspace*{-3.3pt}$\ttunit$\hspace*{-3.3pt}}	(3,0);
	\draw[color=lightgray] (0,3) -- (3,3) -- (3,0) -- (0,0) -- (0,3)-- (3,3);
	\draw[color=black,cap=round] (3,0) -- (3,3);
\end{tikzpicture}\
}}\hspace*{1pt})}}

\DeclareRobustCommand{\squa}{{\rm\scalebox{.8}{$\blacksquare$}}}
\DeclareRobustCommand{\sqtwo}{{\rm\raisebox{-.3ex}{\scalebox{.065}{
 \vspace*{-5pt}
 \begin{tikzpicture}[thick]
	\fill[color=black] (0,0) -- (4,0) -- (4,4) -- (0,4) -- (0,0);
	\node[color=white,scale=6] at (2,2) {\LARGE2};
\end{tikzpicture}
}}}}
\DeclareRobustCommand{\sqk}{{\rm\raisebox{-.3ex}{\scalebox{.065}{
 \vspace*{-5pt}
 \begin{tikzpicture}[thick]
	\fill[color=black] (0,0) -- (4,0) -- (4,4) -- (0,4) -- (0,0);
	\node[color=white,scale=6] at (2,2) {\LARGE k};
\end{tikzpicture}
}}}}

\section{Quadratic normalisations and automatic structures}\label{sec-garside}

This section gathers the definitions of some classical notions like normal form or automatic structure (see \cite{EpsteinWord,CRRT,HT}),
together with the slighly more specific notion of a quadratic normalisation (see~\cite{dehDLT,dehgui})
and a Garside family (see~\cite{dehDLT,dehornoyTheory}).

For any set~$\Gar$, we denote by~$\Gar^+$ the free semigroup over~$\Gar$ (\resp by~$\Gar^*$ the free monoid and by~$\ttunit$ its unit element) and call its elements \emph{$\Gar$-words}.
We write $\card{\word{w}}$ for the length of a $\Gar$-word~$\word{w}$, and $\word{w}\word{w}'$ for the product of two $\Gar$-words~$\word{w}$ and~$\word{w}'$.

\begin{definition}\label{def-automaticsem} Let~$\MM$ be a semigroup with a generating set~$\Gar$. 
A \emph{normal form} for~$(\MM,\Gar)$ is a (set-theoretic) section
of the canonical projection~$\EV$ from the language of $\Gar$-words onto~$\MM$,
that is, a map~$\NF$ that assigns to each element of~$\MM$ a~distinguished representative $\Gar$-word
with~$\EV\circ\NF=\ID_S$:
\begin{center}
\vspace*{-1pt}
\begin{tikzpicture}[scale=1,inner sep=2pt]
	\node  (EV) at (0,0) {$\EV:\Gar^+$};
	\node  (S) at (2.2,0) {$\MM$};
	\path[->,thick]
		(EV)	edge[thin,black,scale=2] (S);
	\path[->,thick]
		(S)	edge[thin,black!70,bend left=40] node[below=.08,black]{$\NF$} (EV.south east);
\end{tikzpicture}
\end{center}
Whenever $\NF(\MM)$ is regular, it provides a \emph{right-automatic structure} for~$\MM$
if the language ${\mathcal L}_q=\{~(\ \NF(a)\#^{\card{\NF(aq)}},\ \NF(aq)\#^{\card{\NF(a)}}\ ):a\in\MM~\}$ over the alphabet~$(\Gar\sqcup\{\#\})^2$ is regular for each~$q\in\Gar$,
where the normal forms of a pair are right-padded with an extra symbol~$\#\not\in\Gar$ to equalise the lengths.
The semigroup~$\MM$ can then be called a \emph{(right-)automatic semigroup}.
\end{definition}
We mention here the thorough and precious study in~\cite{HoffmannPhD} of the different notions (right- or left-reading-padding \emph{vs} right- or left-multiplication) of automaticity for semigroups.

\begin{remark}\label{rem-thurston}In his seminal work~\cite[Chapter~9]{EpsteinWord},
Thurston shows how the whole set of these different automata recognizing
the multiplication---that is, recognizing the languages~${\mathcal L}_q$% of those pairs of normal forms of elements differing by a right factor~$q\in\Gar$
---in Definition~\ref{def-automaticsem}
can be replaced with advantage by a single letter-to-letter transducer over the alphabet~$\Gar$ (see Definition~\ref{def-thurston})
that computes the normal forms via iterated runs:
each run both provides one symbol of the final normal form and outputs a word still to be normalised.
\end{remark}

One will often consider the associated normalisation~$\NM=\NF\circ\EV$ over~$\Gar$.

\begin{definition}\label{def-norm} A \emph{normalisation} is a pair~$(\Gar,\NM)$, where~$\Gar$ is a set and~$\NM$ is a map from~$\Gar^+$ to itself satisfying, for all $\Gar$-words~$\word{u},\word{v},\word{w}$:
\begin{itemize}
\item $\card{\NM(\word{w})} = \card{\word{w}}$,
\item $\card{\word{w}} = 1 \Rightarrow \NM(\word{w}) = \word{w}$,
\item $\NM(\word{u}\,\NM(\word{w})\,\word{v})=\NM(\word{u}\word{w}\word{v})$.
\end{itemize}
A $\Gar$-word~$\word{w}$ satisfying $\NM(\word{w})=\word{w}$ is called \emph{$\NM$-normal}.
If~$\MM$ is a semigroup, we say that $(\Gar, \NM)$ is a \emph{normalisation for~$\MM$} if~$\MM$ admits the presentation\begin{equation*}
\press{~\Gar:\{\,\word{w} = \NM(\word{w}) \mid \word{w} \in \Gar^+\,\}~}.
\end{equation*}
\end{definition}

We associate with every element~$q\in\Gar$ a $q$-labeled edge
and with a product the concatenation of the associated edges, 
and represent equalities in the ambient semigroup using commutative diagrams, that we shall often
organise as tilings and that we call here \emph{square-diagram}. For instance, the following square illustrates an equality $q_1q_2 =q_1'q_2'$.
\begin{center}
\begin{tikzpicture}[thick,node distance=10mm]
	\node (00) at (0,0) {};
	\node (01) [right of=00] {};
	\node (10) [below of=00] {};
	\node (11) [right of=10] {};
	\path[->,>=latex]
		(01)	edge 		node[above]{$q_1$}	(00)
		(00)	edge 		node[left]{$q_2$}	(10)
		(01)	edge 		node[right]{$q_1'$}	(11)
		(11)	edge 		node[below]{$q_2'$}	(10);
\end{tikzpicture}
\end{center}

For a normalisation~$(\Gar,\NM)$, we denote by~$\NMbar$ the restriction of~$\NM$ to~$\Gar^2$ and, for~$i\ge 1$, by~$\NMbar_i$ the (partial) map from~$\Gar^+$ to itself that consists in applying~$\NMbar$ to the entries in position~$i$ and~$i+1$. For any finite sequence
$\word{i}= i_1 \cdots i_n$ of positive integers, we write~$\NMbar_{\word{i}}$ for the composite map $\NMbar_{i_{n}}\comp\cdots\comp \NMbar_{i_1}$ (so $\NMbar_{i_1}$ is applied first).

\begin{definition}\label{def-quad}A normalisation $(\Gar,\NM)$ is \emph{quadratic} if the two following conditions hold:
\begin{itemize}
\item a $\Gar$-word~$\word{w}$ is $\NM$-normal if, and only if, every length-two factor of~$\word{w}$ is;
\item for every $\Gar$-word~$\word{w}$, there exists a finite sequence~$\word{i}$ of positions, depending on~$\word{w}$,
such that $\NM(\word{w})$ is equal to~$\NMbar_\word{i}(\word{w})$.
\end{itemize}
\end{definition}

\begin{definition}\label{def-breadth}As illustrated in~Figure~\ref{fig-breadth},
with any quadratic normalisation~$(\Gar,\NM)$ is associated its \emph{breadth}~$(d,p)$
(called minimal left and right classes in~\cite{dehDLT,dehgui}) defined as:%
\[d=\max_{(q_1,q_2,q_3)\in\Gar^3}\min\{\,\ell:\NM(q_1q_2q_3)=\NMbar_{\underbrace{\scriptstyle212\cdots}_{{\rm length}\, \ell}}(q_1q_2q_3)\},\] and
\[p=\max_{(q_1,q_2,q_3)\in\Gar^3}\min\{\,\ell:\NM(q_1q_2q_3)=\NMbar_{\underbrace{\scriptstyle121\cdots}_{{\rm length}\, \ell}}(q_1q_2q_3)\}.\]%
Such a breadth need to be finite provided that~$\Gar$ is finite, and then satisfies~$|d-p|\leq 1$.
For~$p\leq 3$ (and~$d\leq 4$), the quadratic normalisation~$(\Gar,\NM)$ is said to satisfy Condition~\home\
(its corresponds with the so-called \emph{domino rule} in~\cite{dehornoyTheory} but with a different reading direction).
\end{definition}

\begin{figure}
\vspace*{-45pt}
\centering
 \rotatebox{-15}{
 \begin{tikzpicture}[thick,node distance=15mm,inner sep=1.3pt]
	\node[inner sep=2pt] (00) at (0,0) {~~~~};
	\node[inner sep=4pt] (00) at (0,0) {};
	\node (10) [above of=00] {};
	\node (11) [left of=10] 	{};
	\node (12) [right of=10] 	{};
	\node (13) [left of=11] 	{};
	\node (14) [right of=12] 	{};
	\node (15) [left of=13] 	{};
	\node (16) [right of=14] 	{};
	\node (17) [node distance=7.5mm,left of=15] 	{$\cdots$};
	\node (18) [node distance=7.5mm,right of=16] 	{$\cdots$};
	\node (20) [above of=10] {};
	\node (21) [above of=11] {};
	\node (11b) [node distance=13mm,left of=10] {};
	\node (21b) [node distance=13mm,left of=20] {};
	\node (22) [above of=12] {};
	\node (12b) [node distance=13mm,right of=10] {};
	\node (22b) [node distance=13mm,right of=20] {};
	\node (23) [above of=13] {};
	\node (13b) [node distance=11mm,left of=11] {};
	\node (23b) [node distance=11mm,left of=21] {};
	\node (24) [above of=14] {};
	\node (14b) [node distance=13mm,right of=12] {};
	\node (24b) [node distance=11mm,right of=22] {};
	\node (25) [above of=15] {};
	\node (15b) [node distance=13mm,left of=13] {};
	\node (25b) [node distance=13mm,left of=23] {};
	\node (26) [above of=16] {};
	\node (16b) [node distance=13mm,right of=14] {};
	\node (26b) [node distance=13mm,right of=24] {};
	\node (27) [above of=17] {$\cdots$};
	\node (28) [above of=18] {$\cdots$};
	\node (07) [node distance=7.5mm,above of=17] {$\cdots$};
	\node (007) [rotate=15,below left=13mm and -50pt of 07] {$\NMbar_{\scriptstyle212\cdots}(q_1q_2q_3)$};
	\node (08) [node distance=7.5mm,above of=18] {$\cdots$};
	\node (008) [rotate=15,above right=13mm and -50pt of 08] {$\NMbar_{\scriptstyle121\cdots}(q_1q_2q_3)$};
	\node (30) [above of=20] {};
	\fill[color=lightgray] (0,0) -- (-3,1.5) -- (-3,3) -- (0,4.5) -- (0,0);
	\fill[color=lightgray] (0,0) -- (3,3) -- (0,4.5) -- (0,0);
	\draw[color=white,line width=9pt] (0,0) -- (-3,1.5) -- (-3,3) -- (0,4.5) -- (0,0);
	\draw[color=white,line width=9pt] (0,0) -- (3,3) -- (0,4.5) -- (0,0);
	\draw[color=white,line width=9pt] (0,1.5) -- (1.5,3);
	\draw[color=white,line width=9pt] (1.5,1.5) -- (1.5,3) -- (0,4.5);
	\draw[color=white,line width=9pt] (-3,1.5) -- (-1.5,3) -- (0,4.5);
	\draw[color=white,line width=9pt] (-1.5,1.5) -- (0,3);
	\draw[color=white,line width=9pt] (0,0) -- (-1.5,1.5) -- (-1.5,3);
	\path[->,>=latex,line width=1.4pt]	(30)	edge 		node[pos=.42,fill=white,rotate=15]{\footnotesize$\!q_1\!$} (20);
	\path[->,>=latex,line width=1.4pt]	(20)	edge			node[pos=.42,fill=white,rotate=15]{\footnotesize$\!q_2\!$} (10);
	\path[->,>=latex,line width=1.4pt]	(10)	edge			node[pos=.42,fill=white,rotate=15]{\footnotesize$\!q_3\!$} (00);
	\path[->,>=latex] 	(30)	edge			(21);
	\path[->,>=latex]	(21)	edge			(11);
	\path[->,>=latex]	(11)	edge			(00);
	\path[->,>=latex]	(20)	edge			(11);
	\path[->,>=latex]	(30)	edge			(22);
	\path[->,>=latex]	(22)	edge			(10);
	\path[->,>=latex]	(12)	edge			(00);
	\path[->,>=latex]	(22)	edge			(12);
	\path[->,>=latex]	(21)	edge			(13);
	\path[->,>=latex]	(13)	edge			(00);
	\path[->,>=latex]	(30)	edge			(23);
	\path[->,>=latex]	(23)	edge			(13);
	\path[->,>=latex]	(30)	edge			(24);
	\path[->,>=latex]	(24)	edge			(12);
	\path[->,>=latex]	(24)	edge			(14);
	\path[->,>=latex]	(14)	edge			(00);
	\path[->,>=latex]	(23)	edge			(15);
	\path[->,>=latex]	(15)	edge			(00);
	\path[->,>=latex]	(30)	edge			(25);
	\path[->,>=latex]	(25)	edge			(15);
	\path[->,>=latex]	(30)	edge			(26);
	\path[->,>=latex]	(26)	edge			(14);
	\path[->,>=latex]	(26)	edge			(16);
	\path[->,>=latex]	(16)	edge			(00)
	(10)	edge[densely dotted,white] 		node[fill=lightgray]{$\!\NM\!$}	(11b)
	(11)	edge[densely dotted,white] 		node[fill=lightgray]{$\!\NM\!$}	(13b)
	(10)	edge[densely dotted,white] 		node[fill=lightgray]{$\!\NM\!$}	(12b)
	(12)	edge[thin,densely dotted,gray] 		node[fill=white]{$\!\NM\!$}	(14b)
	(13)	edge[thin,densely dotted,gray] 		node[fill=white]{$\!\NM\!$}	(15b)
	(14)	edge[thin,densely dotted,gray] 		node[fill=white]{$\!\NM\!$}	(16b)
	(20)	edge[densely dotted,white] 		node[fill=lightgray]{$\!\NM\!$}	(21b)
	(21)	edge[densely dotted,white] 		node[fill=lightgray]{$\!\NM\!$}	(23b)
	(20)	edge[densely dotted,white] 		node[fill=lightgray]{$\!\NM\!$}	(22b)
	(22)	edge[densely dotted,white] 		node[fill=lightgray]{$\!\NM\!$}	(24b)
	(23)	edge[thin,densely dotted,gray] 		node[fill=white]{$\!\NM\!$}	(25b)
	(24)	edge[thin,densely dotted,gray] 		node[fill=white]{$\!\NM\!$}	(26b);
\end{tikzpicture}
}
\vspace*{-50pt}
\caption{From an initial $\Gar$-word~$q_1q_2q_3$, one applies normalisations on the first and the second~$2$-factors alternatively up to stabilisation,
beginning either on the first $2$-factor~$q_1q_2$ (right-hand side here) or on the second~$q_2q_3$. The gray zone corresponds to Condition~\home\ as defined in Definition~\ref{def-breadth}.
}%
\label{fig-breadth}
\end{figure}

The first main result of~\cite{dehgui} is an axiomatisation of these quadratic normalisations satisfying Condition~\home\ in terms of their restrictions to length-two words: any idempotent map~$\NMbar$ on~$\Gar^2$ that satisfies $\NMbar_{2121} = \NMbar_{121} =\NMbar_{1212}$ extends into a quadratic normalisation~$(\Gar,\NM)$ satisfying Condition~\home.
For larger breadths, a map on length-two words normalising length-three words needs not normalise words of greater length.

The second main result of~\cite{dehgui} involves termination. Every quadratic normalisation~$(\Gar,\NM)$ gives rise to a quadratic rewriting system, namely the one with rules $\word{w}\longrightarrow\NMbar(\word{w})$ for~$\word{w}\in\Gar^2$. By Definition~\ref{def-quad}, such a rewriting system is confluent and normalising, meaning that, for every initial word, there exists a finite sequence of rewriting steps leading to a unique $\NM$-normal word, but its convergence, meaning that \emph{any} sequence of rewriting steps is finite, is a quite different problem.

\begin{theorem}{\rm\cite{dehgui}}\label{thm-termination}
If $(\Gar,\NM)$ is a quadratic normalisation satisfying Condition~\home, then the associated rewriting system is convergent.
\end{theorem}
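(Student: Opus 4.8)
The plan is to derive convergence from what is already available. By Definition~\ref{def-quad} the system is confluent and normalising, so it suffices to prove termination: no $\Gar$-word admits an infinite sequence of nontrivial rewriting steps. Two reductions frame the problem. First, every rule $\word{w}\longrightarrow\NMbar(\word{w})$ preserves length, so an infinite sequence would stay inside a single $\Gar^n$; when $\Gar$ is finite this set is finite and termination amounts to acyclicity, but—since a confluent normalising system may well contain cycles—the genuine content is to exhibit a quantity that strictly decreases along \emph{every} nontrivial step. Second, a step at position~$j$ alters only positions $j$ and $j{+}1$, so two steps at positions $i,j$ with $|i-j|\geq 2$ act on disjoint letters and commute, closing such a local peak by a one-step-each diamond; only the adjacent case $|i-j|=1$ genuinely interacts, and it is confined to a length-three window.

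The key input is Condition~\home, which I use in the equivalent algebraic form $\NMbar_{121}=\NMbar_{1212}=\NMbar_{2121}$ on $\Gar^3$. From it I first extract a \emph{domino lemma}: if the left factor $q_1q_2$ of $q_1q_2q_3$ is $\NM$-normal, then $\NMbar_1\comp\NMbar_2$ already returns the normal form. Indeed $\NMbar_1(q_1q_2q_3)=q_1q_2q_3$ by hypothesis, so $\NF(q_1q_2q_3)=\NMbar_{121}(q_1q_2q_3)=\NMbar_1\comp\NMbar_2\comp\NMbar_1(q_1q_2q_3)=\NMbar_1\comp\NMbar_2(q_1q_2q_3)$, the first equality because $p\leq 3$. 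In words: once the left pair is normal, normalising the right pair and re-normalising the left pair makes the whole window normal, and in particular never spoils the right pair again. Using this I set up the canonical left-to-right strategy, building $\NF(q_1\cdots q_n)$ by inserting letters one at a time, each insertion triggering a cascade $\NMbar_k,\NMbar_{k-1},\ldots$ that falls cleanly leftwards: at every stage the relevant left pair was normal before the cascade reached it, so the lemma guarantees the freshly normalised pairs to its right are never damaged. This shows the strategy reaches $\NF$ in at most $\binom{n}{2}$ steps and pins down the efficient orientation of the system (the asymmetry $p\leq 3<4\geq d$).

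The decisive step is to upgrade this single strategy into a global termination measure~$\mu$ that drops along every nontrivial step, not only the canonical ones. I would define $\mu(\word{w})$ as a multiset over a well-founded set, built position by position from the local distance-to-normality of each length-three window and combined so that the weight gained when a step un-normalises a neighbour is strictly outweighed by the weight released by the pair it has just normalised. The non-adjacent commutation together with the finite length-three dynamics (breadth $(4,3)$) then reduce the verification of $\mu(\word{w})>\mu(\word{w}')$ to the single overlapping peak inside one window, where the identities $\NMbar_{121}=\NMbar_{1212}=\NMbar_{2121}$ force reconvergence; well-foundedness of~$\mu$ closes the argument and, with confluence, yields convergence.

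I expect this last step to be the main obstacle. A single application of $\NMbar_j$ to an arbitrary word can un-normalise \emph{either} neighbouring pair, so neither a left- nor a right-increasing positional weighting decreases unconditionally, and the domino lemma tames only the configuration in which the left pair is already normal. The crux is therefore to calibrate~$\mu$ so that leftward and rightward ``damage'' cannot conspire into a cycle, using precisely the asymmetry of Condition~\home\ to break any would-be loop—this is exactly the place where larger breadths fail and the hypothesis is indispensable. Once the measure is in place the conclusion is immediate.
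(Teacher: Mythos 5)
First, a point of order: the paper contains no proof of this statement. Theorem~\ref{thm-termination} is imported verbatim from~\cite{dehgui}, and the author explicitly notes that it is ``mentioned here for completeness'' and is neither applied nor needed for the paper's own results. So there is no internal proof to compare your attempt against; I can only assess your proposal on its own terms.

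On those terms, there is a genuine gap, and you have located it yourself: the termination measure~$\mu$ is announced but never constructed, and everything that precedes it does not substitute for it. The reductions you set up are sound --- confluence and normalisation come for free from Definition~\ref{def-quad}, length-preservation confines any infinite sequence to a single~$\Gar^n$, distant steps commute, and your domino lemma ($\NM(q_1q_2q_3)=\NMbar_{21}(q_1q_2q_3)$ when $q_1q_2$ is already $\NM$-normal, a correct consequence of $p\leq 3$) does control the canonical left-to-right strategy. But that strategy is one particular reduction sequence; the theorem asserts that \emph{every} reduction sequence is finite, and the leap from ``some strategy terminates'' to ``all strategies terminate'' is precisely the content of the theorem, not a refinement of it. Your paragraph beginning ``I would define $\mu(\word{w})$ as a multiset\dots'' is a statement of intent, not an argument: no well-founded set is specified, no weights are given, and no verification is performed that $\mu$ drops under an arbitrary application of~$\NMbar_j$ --- which, as you note, can un-normalise either neighbour. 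A concrete warning sign that the measure cannot be the naive one you gesture at: in the breadth-$(4,3)$ case the sharp bound on the length of an arbitrary rewriting sequence from~$\Gar^p$ is $2^p-p-1$, exponential in~$p$, whereas any positional weighting of ``local distances to normality'' of the kind you describe would cap sequence lengths polynomially. The actual argument of~\cite{dehgui} is a delicate induction tracking how rewriting steps propagate through the word, and it is exactly this combinatorial core that your proposal leaves open.
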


More precisely, every rewriting sequence starting from a word of~$\Gar^p$ has length at most~$\frac{\pp(\pp-1)}{2}$ (\resp $2^\pp - \pp - 1$)
in the case of a breadth~$(3,3)$ (\resp either~$(3,4)$ or~$(4,3)$).
Theorem~\ref{thm-termination} is essentially optimal since there exist nonconvergent rewriting systems with breadth~$(4,4)$.

\bigbreak
The rest of the current section describes a tiny fragment of Garside theory (see \cite{dehornoyTheory} for its foundations).
Garside families were recently introduced as a general framework guaranteeing the existence of normal forms.
While this notion is not necessary for the understanding of the main result, its proof, and the whole of Section~\ref{sec-main},
several examples of~Section~\ref{sec-ex} could rely on it.

\medbreak
Let $\MM$ be a monoid. For~$\ff,\gg,\hh\in\MM$, $\ff$ is a \emph{left-divisor} of~$\gg$ or, equivalently, $\gg$ is a \emph{right-multiple} of~$\ff$ if $\gg = \ff\gg'$ holds for some~$\gg'$ in~$\MM$;
moreover, $\gg$ is a minimal common right-multiple, or \emph{right-mcm}, of~$\ff$ and~$\hh$
if $\gg$ is a right-multiple of~$\ff$ and~$\hh$, and no proper left-divisor of~$\gg$ is a right-multiple of~$\ff$ and~$\hh$.

Furthermore, $\MM$ is said to be \emph{right-cancellative} whenever, for all~$\ff,\gg,\hh\in\MM$, $\ff\gg=\hh\gg$ implies $\ff=\hh$,
and $\MM$ admits no nontrivial \emph{invertible element} whenever $\ff\gg = \ttunit$ implies~$\ff = \gg = \ttunit$.
\emph{Right-divisor}, \emph{left-mcm}, and \emph{left-cancellativity} are defined symmetrically.

\begin{definition}\label{def-family} If $\MM$ is a right-cancellative monoid with no nontrivial invertible element,
a \emph{(right-)Garside family} for~$\MM$ is a generating set closed under left-divisor and under left-mcm.
\end{definition}

Various practical characterisations of Garside families are known, depending in particular on the specific properties of the considered monoid.
The following is especially relevant here.

\begin{theorem}{\rm\cite{dehgui}}\label{thm-garhome}
Assume that $\MM$ is a right-cancellative monoid with no nontrivial invertible element and with a finite (right-)Garside family~$\Gar$.
Then the normalisation $(\Gar,\NM)$ defined by~$\NM(ab)=cd$ for~$a,b,c,d\in\Gar$
with~$d$ maximal, satisfies Condition~\home.
\end{theorem}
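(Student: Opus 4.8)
The plan is to reduce the statement to the first axiomatisation result of~\cite{dehgui} quoted above: it suffices to check that the restriction~$\NMbar$ of the greedy map to~$\Gar^2$ is well defined and idempotent, and that it satisfies the identity $\NMbar_{2121}=\NMbar_{121}=\NMbar_{1212}$ on~$\Gar^3$. The quoted result then produces a quadratic normalisation satisfying Condition~\home, and this extension must coincide with~$(\Gar,\NM)$ since the greedy normal form is characterised by the length-two normality of all its factors. So the real work lies in the three-letter identity, whose two halves (giving $p\le 3$ and $d\le 4$) I would treat separately.

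First I would settle well-definedness. Given $a,b\in\Gar$, consider the set~$D$ of those~$e\in\Gar$ that right-divide~$ab$; it is nonempty (it contains~$b$) and finite. If $e_1,e_2\in D$, then $ab$ is a common left-multiple of~$e_1$ and~$e_2$, so their left-mcm exists, right-divides~$ab$, and lies in~$\Gar$ by closure under left-mcm; hence~$D$ is upward directed and, being finite, has a greatest element~$d$. This is the maximal right-divisor, so $\NM(ab)=cd$ is unambiguous. That $c\in\Gar$ follows from cancellativity: since $b\in D$ we have $b\dive d$ in the right-divisibility order, say $d=fb$; from $ab=cd=cfb$ right-cancellativity gives $a=cf$, so~$c$ left-divides~$a\in\Gar$ and closure under left-divisor yields~$c\in\Gar$. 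Idempotence of~$\NMbar$ is immediate, as~$d$ was chosen maximal already for~$ab=cd$.

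For the easy half ($p\le 3$) I would run the stabilisation of the last letter. Write $g=q_1q_2q_3$ and let $(r_1,r_2,r_3)$ be its genuine greedy normal form. Normalising the first pair and then the second, a short cancellation argument shows that the trailing letter produced by~$\NMbar_{12}$ is already the maximal right-divisor~$r_3$ of the whole of~$g$: any $\Gar$-right-divisor~$e^\ast$ of~$g$ admits~$q_3$ as a right-divisor, and peeling off~$q_3$ exhibits~$e^\ast$ as a right-divisor of $(\text{max right-divisor of }q_1q_2)\,q_3$, forcing $e^\ast\dive r_3$. A final~$\NMbar_1$ then normalises the remaining left pair to~$(r_1,r_2)$, so $\NMbar_{121}(g)=r_1r_2r_3=\NM(g)$; since $(r_2,r_3)$ is itself normal, the extra step in~$\NMbar_{1212}$ changes nothing and $\NMbar_{1212}=\NMbar_{121}=\NM$ on~$\Gar^3$.

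The hard part is the remaining equality $\NMbar_{2121}=\NM$ on~$\Gar^3$, i.e.\ the bound~$d\le 4$. Here one starts on the \emph{wrong} pair, and normalising the left pair afterwards perturbs the interface letter, so the right pair may cease to be maximal and must be re-pushed. Controlling this propagation and showing it terminates after four alternations is exactly the content of the \emph{domino rule} for greedy normal forms in a Garside family~\cite{dehornoyTheory}, which I would invoke: right-cancellativity together with closure of~$\Gar$ under left-divisor and left-mcm guarantees that pushing the maximal right-divisor across one square preserves normality of the adjacent pair in a single controlled direction, and this asymmetry is precisely what makes the breadth~$(4,3)$ rather than~$(3,3)$. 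Feeding the resulting three-letter identity into the axiomatisation of~\cite{dehgui} then yields Condition~\home, completing the argument. I expect the technical core to be this domino-rule propagation; the $p\le 3$ direction and well-definedness are comparatively routine consequences of right-cancellativity and the two closure properties of a Garside family.
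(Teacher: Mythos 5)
First, a point of reference: the paper contains no proof of Theorem~\ref{thm-garhome}. It is quoted from~\cite{dehgui}, and the text explicitly says it is mentioned ``for completeness'' and is neither applied nor needed for the results of Section~\ref{sec-main}. So your attempt can only be judged on its own terms and against the external source. Your overall architecture is the right one --- verify that the greedy map is a well-defined idempotent on~$\Gar^2$ and establish the local three-letter identity $\NMbar_{2121}=\NMbar_{121}=\NMbar_{1212}$, then feed it to the axiomatisation of~\cite{dehgui} --- and your well-definedness paragraph (directedness of the set of $\Gar$-right-divisors via closure under left-mcm, membership $c\in\Gar$ via right-cancellation and closure under left-divisor) is essentially sound.

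There are, however, genuine gaps. (i)~In the $p\le 3$ half, the key claim that \emph{any} $\Gar$-right-divisor~$e^\ast$ of~$g=q_1q_2q_3$ admits~$q_3$ as a right-divisor is false: in the free abelian monoid on~$x,y$ with $\Gar=\{\ttunit,x,y,xy\}$ and $g=x\cdot y\cdot x$, the element~$y$ right-divides~$g$ but~$x$ does not right-divide~$y$. The argument is repairable --- it suffices to apply it to the maximal right-divisor~$r_3$ alone, for which $q_3\dive r_3$ does hold by maximality; writing $r_3=fq_3$, right-cancellation shows~$f$ right-divides~$q_1q_2$, closure under left-divisor gives $f\in\Gar$, hence~$f$ right-divides the maximal $\Gar$-right-divisor of~$q_1q_2$ and~$r_3$ right-divides~$q_2'q_3$ --- but as written the lemma you rely on is wrong. (ii)~More seriously, the $d\le 4$ half, which you yourself call the technical core, is not proved: you invoke the domino rule of~\cite{dehornoyTheory}, but for a Garside family the domino rule \emph{is} essentially the statement to be established, so the appeal is circular unless you derive the domino rule from right-cancellativity and the two closure properties, which you do not attempt. (iii)~A smaller issue: the existence of a left-mcm of two right-divisors of~$ab$ (needed for directedness of your set~$D$) is asserted rather than proved; with the paper's minimal axioms this requires a Noetherianity-type argument that does not follow automatically from right-cancellativity and the absence of nontrivial invertible elements.
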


This characterisation would help to handle tiny cases like Example~\ref{ex-bsOneZero},
now it will reveal all of its strength for Examples~\ref{ex-akram}, \ref{ex-bsThreeTwo}, or \ref{ex-braid}:
dedicated procedures allow to compute in a trice the closures displayed on Figures~\ref{fig-akram} and~\ref{fig-bsThreeTwo}.
However, Examples~\ref{ex-bicyclic} and~\ref{ex-plactic} are out of its range,
and, for such profiles, new efficient tools are to be built (see~\cite{darla}). Whatever the way a quadratic normalisation is obtained (brute force, Garside theory, Knuth--Bendix completion, etc), to compute its breadth and to check Condition~\home\ remain low-cost.

\medbreak 

The results of Section~\ref{sec-main} rely on the special Condition~\home. As mentioned, this condition was already
outlined by Dehornoy and Guiraud (see~\cite{dehgui}).
However, none of their results (in particular Theorem~\ref{thm-termination} and~\ref{thm-garhome}
given above for the sake of completeness) is either applied or needed to establish ours.
The current work and its exposition are thus self-contained and our constructions never require any of their stronger hypotheses
(neither cancellativity nor absence of nontrivial invertible elements). We want here to emphasise that Condition~\home\ happens to appear as a common denominator from different approaches.

%-----------------------------------------------------------------------------------------
%-----------------------------------------------------------------------------------------
%-----------------------------------------------------------------------------------------
%-----------------------------------------------------------------------------------------
\section{From an automatic structure to a self-similar structure}\label{sec-main}

All the ingredients are now in place to effectively and naturally interpret
as an automaton monoid any automatic monoid
admitting a special language of normal forms---namely,
a quadratic normalisation satisfying Condition~\home.
The point is to construct a Mealy automaton encoding the behaviour
of its language of normal forms under one-sided multiplication.

\begin{definition}\label{def-msqn}
Assume that~$\MM$ is a semigroup admitting a quadratic normalisation~$(\Gar, \NM)$. 
We define the Mealy automaton~$\Mea_{\MM,\Gar,\NM}=(\Gar,\Gar,\tau,\sigma)$
such that, for every~$(a,b)\in\Gar^2$, $\sigma_b(a)$ is the rightmost element of~$\Gar$
in the normal form~$\NM(ab)$ of~$ab$ and $\tau_a(b)$ is the left one:
\[\NM(ab)=\tau_a(b)\sigma_b(a).\]
\end{definition}

The latter correspondence can be simply interpreted via square-diagram \emph{vs} cross-diagram:
\begin{center}
\begin{tikzpicture}[thick,node distance=13mm]
	\node (00) at (0,2) {};
	\node (01) [right of=00] {};
	\node (10) [below of=00] {};
	\node (11) [right of=10] {};
	\path[->,>=latex]
		(01)	edge 		node[above]{$a$}	(00)
		(00)	edge 		node[left]{$b$}		(10)
		(01)	edge 		node[right]{$\tau_a(b)$}	(11)
		(11)	edge 		node[below]{$\sigma_b(a)$}	(10)
		(00)	edge[thin,densely dotted,black!50] 		node[fill=white]{$\!\NM\!$}	(11);
	\begin{scope}[thick,node distance=16mm,xshift=50mm]
	\node (01) at (.75,2) {$a$};
	\node (10) at (0,1.25) {$b$};
	\node (12) [node distance=18mm,right of=10] {$\tau_a(b)$};
	\node (21) [below of=01] {$\sigma_b(a)$};
	\path[->,>=latex]
		(01)	edge  (21)
		(10)	edge  (12);
	\end{scope}
\end{tikzpicture}
\end{center}

For~$\word{s}=s_n\cdots s_1$, $\word{t}=t_n\cdots t_1$, $\sigma_q(\word{s})=\word{t}$, and~$\tau_\word{s}(q)=r$,
we obtain diagrammatically:
\begin{center}
\begin{tikzpicture}[thick,node distance=15mm]
	\node (00) at (0,2) {};
	\node (01) [right of=00] {};
	\node (02) [right of=01] {};
	\node (03) [right of=02] {};
	\node (04) [right of=03] {};
	\node (10) [below of=00] {};
	\node (11) [right of=10] {};
	\node (12) [right of=11] {};
	\node (13) [right of=12] {};
	\node (14) [right of=13] {};
	\path[->,>=latex]
		(01)	edge 		node[above]{$\ss_1$}	(00)
		(02)	edge 		node[above]{$\ss_2$}	(01)
		(04)	edge 		node[above]{$\ss_n$}	(03)
		(00)	edge 		node[left]{$q$}		(10);
	\path[->,>=latex,black!50]
		(00)	edge[thin,densely dotted] 		node[fill=white]{$\!\NM\!$}	(11)
		(01)	edge[thin,densely dotted] 		node[fill=white]{$\!\NM\!$}	(12)
		(03)	edge[thin,densely dotted] 		node[fill=white]{}	(14)
		(01)	edge 		node[right]{$q_1$}	(11)
		(02)	edge 		node[right]{$q_2$}	(12)
		(03)	edge 		node[right]{$q_{n-1}$}	(13)
		(04)	edge 		node[right]{$r$}	(14)
		(11)	edge 		node[below]{$\tt_1$}	(10)
		(12)	edge 		node[below]{$\tt_2$}	(11)
		(14)	edge 		node[below]{$\tt_n$}	(13);
	\path
		(03)	edge[dotted] 						(02);
	\path[black!50]
		(13)	edge[dotted] 						(12);
\end{tikzpicture}
\end{center}
\vspace*{-10pt}
\begin{center}
\begin{tikzpicture}[thick,node distance=16mm]
	\node (01) at (.75,2) {$\ss_1$};
	\node (03) [right of=01] {$\ss_2$};
	\node (05) [right of=03] {};
	\node (07) [right of=05] {$\ss_n$};
	\node (10) at (0,1.25) {$q$};
	\node (12) [right of=10] {$q_1$};
	\node (14) [right of=12] {$q_2$};
	\node (14right) [right of=14,node distance=4mm] {};
	\node (16) [right of=14,inner sep=0pt] {$q_{n-1}$};
	\node (16left) [left of=16,node distance=6mm] {};
	\node (18) [right of=16] {$r$};
	\node (21) [below of=01] {$\tt_1$};
	\node (23) [right of=21] {$\tt_2$};
	\node (25) [right of=23] {};
	\node (27) [right of=25] {$\tt_n$};
	\path[->,>=latex]
		(01)	edge  (21)
		(10)	edge  (12)
		(03)	edge  (23)
		(12)	edge  (14)
		(16)	edge  (18)
		(07)	edge	  (27);
	\path
		(14right)	edge[dotted]	(16left);
\end{tikzpicture}
\end{center}

We choose on purpose to always draw a normalisation square-diagram backward,
such that it coincides with the associated cross-diagram.
The function~$\sigma_q$ induced by the state~$q$ should
map any word~$\word{s}$ (read backward)
to some word~$\word{t}$ (read backward) with $\NM(\word{s}q)=\NM(r\word{t})$.

\medbreak We now aim to strike reasonable (most often optimal) hypotheses for a quadratic normalisation~$(\Gar,\NM)$
associated with an original semigroup~$\MM$ to generate a semigroup~$\press{~\Mea_{\MM,\Gar,\NM}~}$
that approximates~$\MM$ as sharply as possible. Since the generating sets coincide by Definition~\ref{def-msqn}, we shall focus on the case
where $\MM$ is a quotient of~$\Mea_{\MM,\Gar,\NM}$ (top-approximation, Lemma~\ref{lem-easy}), and next, on the case where~$\Mea_{\MM,\Gar,\NM}$
is a quotient of~$\MM$ (bottom-approximation, Proposition~\ref{prop-home}).

\medbreak Before establishing our top-approximation statement (Lemma~\ref{lem-easy}),
we first recall how semigroups could appear much more difficult to handle,
especially when it comes to automaticity (see~\cite{HoffmannPhD}) or self-similarity (see~\cite{BroughCain15,BroughCain16}).
Let~$\MM$ be a semigroup with a quadratic normalisation~$(\Gar,\NM)$: % of breadth~$(d,p)$:
two situations occur.
First, if $\MM$ is a monoid with unit~$\ttunit$, it admits a quadratic normalisation satisfying~$\NM(\ttunit)=\ttunit$ and
\begin{equation}
\NM(\ttunit q)=\NM(q\ttunit)=\ttunit q\tag{\unit}
\end{equation}
for each~$q\in\Gar$.
Second, if $\MM$ does not admit a unit, one can adjoin a unit~$\ttunit$ to obtain a monoid (if needed) with a quadratic normalisation satisfying~Condition\cunit.
The choice made for such a condition becomes natural
whenever we think of the (adjoined or not) unit~$\ttunit$
as some \emph{dummy} element that escapes from the normalisation and simply ensures its length-preserving property.

\begin{lemma}\label{lem-easy}
If $\MM$ is a monoid with a quadratic normalisation~$(\Gar,\NM)$ satisfying~Condition~\cunit,
then the Mealy automaton~$\Mea_{\MM,\Gar,\NM}$ generates a monoid of which~$\MM$ is a quotient.
\end{lemma}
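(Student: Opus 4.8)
The plan is to exhibit an explicit surjective homomorphism $\phi\colon\press{\Mea_{\MM^\ttunit,\Gar^\ttunit,\NM^\ttunit}}\twoheadrightarrow\MM^\ttunit$. Since the two structures share the generating set $\Gar^\ttunit$ (the state $q$ being identified with the generator $q$), the only candidate is $\phi(\sigma_q)=q$, and the whole difficulty is well-definedness: if two state words $\word{w},\word{w}'\in(\Gar^\ttunit)^+$ induce the same production function $\sigma_{\word w}=\sigma_{\word{w}'}$, then $\EV(\word w)=\EV(\word{w}')$ in $\MM^\ttunit$. The key idea is that the element $\EV(\word w)$ can be read off from the \emph{single} value $\sigma_{\word w}(\ttunit^k)$ for $k$ large, so that equality of functions forces equality of evaluations.

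First I would record the local relations. By Definition~\ref{def-msqn} together with the presentation of $\MM^\ttunit$ (Definition~\ref{def-norm}), every elementary square $\NM^\ttunit(ab)=\tau_a(b)\,\sigma_b(a)$ already holds in $\MM^\ttunit$, i.e. $ab=\tau_a(b)\,\sigma_b(a)$. Feeding a single state $q$ an input word $\word u=u_1\cdots u_k$ produces, column by column, carries $q^{(0)}=q,\dots,q^{(k)}=\tau_{\word u}(q)$ and outputs $o_1,\dots,o_k$ with $\sigma_q(\word u)=o_1\cdots o_k$ and $u_jq^{(j-1)}=q^{(j)}o_j$ for each $j$. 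Telescoping these $k$ identities in $\MM^\ttunit$ yields the boundary relation
\[
(u_k\cdots u_1)\,q \;=\; \tau_{\word u}(q)\,(o_k\cdots o_1)\qquad\text{in }\MM^\ttunit,
\]
that is, $\EV$ of the reversed input times $q$ equals the exit carry times $\EV$ of the reversed output. This step is a routine induction once the squares are oriented as in the cross-diagram following Definition~\ref{def-msqn}.

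The crux is to make the carry terms disappear, and this is exactly where Condition~\cunit\ is used. From $\cunit$ one gets $\NM^\ttunit(\ttunit s)=\ttunit s$ for every $s\in\Gar^\ttunit$, so a column whose \emph{input} letter is $\ttunit$ emits its incoming state and resets its outgoing carry to $\ttunit$, while $\NM^\ttunit(\ttunit\ttunit)=\ttunit\ttunit$ makes $\ttunit$-input/$\ttunit$-state columns reproduce $\ttunit$; in particular $\sigma_\ttunit=\mathrm{id}$, so $\press{\Mea_{\MM^\ttunit,\Gar^\ttunit,\NM^\ttunit}}$ is a monoid. Writing $\word u^{(0)}=\ttunit^k$ and $\word u^{(i)}=\sigma_{w_i}(\word u^{(i-1)})$ for $\word w=w_1\cdots w_n$ and $k>n$, I would prove by induction on $i$ the invariant that every position beyond the $i$-th of $\word u^{(i)}$ carries the letter $\ttunit$: the unit letters in those tail positions force the carry to $\ttunit$ and regenerate $\ttunit$'s, so the content grows by at most one position per run and never reaches the end. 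Consequently the last letter processed at each run is $\ttunit$, whence every exit carry $\tau_{\word u^{(i-1)}}(w_i)$ equals $\ttunit$.

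Feeding these vanishing carries back into the boundary relation collapses the telescope: setting $e_i:=\EV\bigl(\overleftarrow{\word u^{(i)}}\bigr)$, where $\overleftarrow{\word v}$ denotes the reversal of $\word v$, one gets $e_{i-1}\,w_i=e_i$ and $e_0=\ttunit$, hence $e_n=w_1\cdots w_n=\EV(\word w)$ in $\MM^\ttunit$. Since $e_n$ depends only on the word $\sigma_{\word w}(\ttunit^k)$, hence only on the function $\sigma_{\word w}$, the assignment $\phi(\sigma_{\word w}):=\EV(\word w)$ is well defined; it is a homomorphism because concatenation of state words corresponds both to composition of production functions and to multiplication in $\MM^\ttunit$, and it is onto because $\Gar^\ttunit$ generates $\MM^\ttunit$. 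The main obstacle is precisely the carry-vanishing invariant: one must be sure the unit padding stays long enough and that Condition~\cunit\ genuinely forces every overflow into $\ttunit$ across the $n$ stacked runs; everything else is bookkeeping with the square-diagram/cross-diagram correspondence of Definition~\ref{def-msqn}.
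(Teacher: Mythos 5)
Your proof is correct and follows essentially the same route as the paper: both arguments recover the element $\EV(\word w)$ of $\MM^\ttunit$ from the single value $\sigma_{\word w}(\ttunit^k)$ for $k$ large, using Condition~\cunit\ to make the $\ttunit$-padding absorb the carries. You spell out explicitly (via the telescoped boundary relation and the carry-vanishing invariant) what the paper delegates to Figure~\ref{fig-easy} and the phrase ``by very definition of~$\Mea_{\MM^\ttunit,\Gar^\ttunit,\NM^\ttunit}$'', the only cosmetic difference being that the paper identifies the output with the padded reversed normal form $\ttunit^{-\omega}\NM(p_1\cdots p_k)$ whereas you only track its evaluation, which suffices.
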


\begin{figure}[b]
\begin{center}
\begin{tikzpicture}[thick,node distance=11mm,inner sep=.4mm]
\begin{scope}[]
	\node (00) at (0,2) {};
	\node (01) [right of=00] {};
	\node (02) [right of=01] {};
	\node (03) [right of=02] {};
	\node (04) [right of=03] {};
	\node (05) [right of=04] {};
	\node (06) [node distance=6mm,right of=05] {};
	\node (10) [below of=00] {};
	\node (11) [right of=10] {};
	\node (12) [right of=11] {};
	\node (13) [right of=12] {};
	\node (14) [right of=13] {};
	\node (15) [right of=14] {};
	\node (16) [node distance=6mm,right of=15] {};
	\node (20) [below of=10] {};
	\node (21) [right of=20] {};
	\node (22) [right of=21] {};
	\node (23) [right of=22] {};
	\node (24) [right of=23] {};
	\node (25) [right of=24] {};
	\node (26) [node distance=6mm,right of=25] {};
	\node (30) [below of=20] {};
	\node (31) [right of=30] {};
	\node (32) [right of=31] {};
	\node (33) [right of=32] {};
	\node (34) [right of=33] {};
	\node (35) [right of=34] {};
	\node (36) [node distance=6mm,right of=35] {};
	\node (40) [below of=30] {};
	\node (41) [right of=40] {};
	\node (42) [right of=41] {};
	\node (43) [right of=42] {};
	\node (44) [right of=43] {};
	\node (45) [right of=44] {};
	\node (46) [node distance=6mm,right of=45] {};
	\begin{scope}[node distance=5.5mm]
		\node (005) [right of=00,black!50] {$\ttunit$};
		\node (015) [right of=01,black!50] {$\ttunit$};
		\node (025) [right of=02,black!50] {$\ttunit$};
		\node (035) [right of=03,black!50] {};
		\node (045) [right of=04,black!50] {$\ttunit$};
		\node (055) [right of=05,black!50] {};
		\node (065) [right of=06,black!50] {};
		\node (105) [right of=10,inner sep=-1pt] {$\phantom{'}p_1\phantom{'}$};
		\node (115) [right of=11,inner sep=1pt,black!50] {$\ttunit$};
		\node (125) [right of=12,inner sep=1pt,black!50] {$\ttunit$};
		\node (135) [right of=13,inner sep=1pt,black!50] {};
		\node (145) [right of=14,inner sep=1pt,black!50] {$\ttunit$};
		\node (155) [right of=15,inner sep=1pt,black!50] {};
		\node (165) [right of=16,inner sep=1pt,black!50] {};
		\node (205) [right of=20,inner sep=0pt,yshift=-4pt] {$\phantom{'}p_1'\phantom{'}$};
		\node (215) [right of=21,inner sep=0pt,yshift=-4pt] {$\phantom{'}p_2'\phantom{'}$};
		\node (225) [right of=22,inner sep=2pt,yshift=-4pt,black!50] {$\ttunit$};
		\node (235) [right of=23,inner sep=1pt,black!50] {};
		\node (245) [right of=24,inner sep=2pt,yshift=-4pt,black!50] {$\ttunit$};
		\node (255) [right of=25,inner sep=1pt,black!50] {};
		\node (265) [right of=26,inner sep=1pt,black!50] {};
		\node (305) [right of=30,inner sep=0pt,yshift=2pt] {$\phantom{'}p_1^{(k-2)}\phantom{'}$};
		\node (315) [right of=31,inner sep=0pt,yshift=2pt] {$\phantom{'}p_2^{(k-2)}\phantom{'}$};
		\node (325) [right of=32,inner sep=-2pt] {};
		\node (335) [right of=33,inner sep=2pt,black!50] {$\ttunit$};
		\node (345) [right of=34,inner sep=2pt,black!50] {$\ttunit$};
		\node (355) [right of=35,inner sep=-2pt] {};
		%\node (365) [right of=36,inner sep=-2pt] {};
		\node (405) [right of=40,inner sep=0pt,yshift=-6pt] {$\phantom{'}p_1^{(k-1)}\phantom{'}$};
		\node (415) [right of=41,inner sep=0pt,yshift=-6pt] {$\phantom{'}p_2^{(k-1)}\phantom{'}$};
		\node (425) [right of=42,inner sep=1pt] {};
		\node (435) [right of=43,inner sep=0pt,yshift=-6pt] {$\phantom{'}p_k^{(k-1)}\phantom{'}$};
		\node (445) [right of=44,inner sep=4pt,yshift=-6pt,black!50] {$\ttunit$};
		\node (500) [below of=00,inner sep=-2pt] {$\phantom{'}p_1\phantom{'}$};
		\node (510) [below of=10,inner sep=-2pt] {$\phantom{'}p_1\phantom{'}$};
		\node (520) [below of=20,inner sep=-2pt] {};
		\node (530) [below of=30,inner sep=-2pt] {$\phantom{'}p_k\phantom{'}$};
		\node (501) [below of=01,inner sep=1pt,black!50] {$\ttunit$};
		\node (511) [below of=11,inner sep=-2pt] {$\phantom{'}p_2'\phantom{'}$};
		\node (521) [below of=21,inner sep=-2pt] {};
		\node (531) [below of=31,inner sep=-2pt] {$\phantom{'}p_k'\phantom{'}$};
		\node (502) [below of=02,inner sep=1pt,black!50] {$\ttunit$};
		\node (512) [below of=12,inner sep=1pt,black!50] {$\ttunit$};
		\node (522) [below of=22,inner sep=-2pt] {};
		\node (532) [below of=32,inner sep=-2pt] {$\phantom{'}p_k''\phantom{'}$};
		\node (503) [below of=03,inner sep=1pt,black!50] {$\ttunit$};
		\node (513) [below of=13,inner sep=1pt,black!50] {$\ttunit$};
		\node (523) [below of=23,inner sep=-2pt] {};
		\node (533) [below of=33,inner sep=-10pt] {$\phantom{'}p_k^{(k-1)}\phantom{'}$};
		\node (533bis) [node distance=4mm,left of=533] {};
		\node (504) [below of=04,inner sep=1pt,black!50] {$\ttunit$};
		\node (514) [below of=14,inner sep=1pt,black!50] {$\ttunit$};
		\node (524) [below of=24,inner sep=-2pt] {};
		\node (534) [below of=34,inner sep=1pt,black!50] {$\ttunit$};
		\node (505) [below of=05,inner sep=1pt,black!50] {$\ttunit$};
		\node (515) [below of=15,inner sep=1pt,black!50] {$\ttunit$};
		\node (525) [below of=25,inner sep=-2pt] {};
		\node (535) [below of=35,inner sep=1pt,black!50] {$\ttunit$};
		\node (506) [below of=06,inner sep=1pt,black!50] {};
		\node (516) [below of=16,inner sep=1pt,black!50] {};
		\node (536) [below of=36,inner sep=1pt,black!50] {};
	\end{scope}
	\path[->,>=latex]
		%% line 1 of VH-crosses
		(005)	edge[black]	(105)
		(500)	edge[black]	(501)
		(015)	edge[black]	(115)
		(501)	edge[black]	(502)
		(025)	edge[black]	(125)
		(502)	edge[black]	(503)
		(503)	edge[-,dotted,black!50]	(504)
		(045)	edge[black]	(145)
		(504)	edge[black]	(505)
		%% line 2 of VH-crosses
		(105)	edge[black]	(205)
		(510)	edge[black]	(511)
		(115)	edge[black]	(215)
		(511)	edge[black]	(512)
		(125)	edge[black]	(225)
		(512)	edge[black]	(513)
		(513)	edge[-,dotted,black!50]	(514)
		(145)	edge[black]	(245)
		(514)	edge[black]	(515)
		%% line 3 of dashed VH-crosses
		(205)	edge[-,dotted,black!50]	(305)
		%(520)	edge[-,dotted,black!50]	(521)
		%
		(215)	edge[-,dotted,black!50]	(315)
		%(521)	edge[black]	(522)
		%
		%(225)	edge[-,dotted,black!50]	(325)
		%(522)	edge[black]	(523)
		%
		(245)	edge[-,dotted,black!50]	(345)
		%(524)	edge[black]	(525)
		%% line 4 of VH-crosses
		(305)	edge[black]	(405)
		(530)	edge[black]	(531)
		(315)	edge[black]	(415)
		(531)	edge[black]	(532)
		(532)	edge[-,dotted,black!50]	(533bis)
		(335)	edge[black]	(435)
		(533)	edge[black]	(534)
		(345)	edge[black]	(445)
		(534)	edge[black]	(535)
		(505)	edge[-,dotted,black!50]	(506)
		(515)	edge[-,dotted,black!50]	(516)
		(535)	edge[-,dotted,black!50]	(536);

\end{scope}
\begin{scope}[xshift=75mm]%version normalisation 
	\node (00) at (0,2) {};
	\node (01) [right of=00] {};
	\node (02) [right of=01] {};
	\node (03) [right of=02] {};
	\node (04) [right of=03] {};
	\node (05) [right of=04] {};
	\node (06) [node distance=6mm,right of=05] {};
	\node (10) [below of=00] {};
	\node (11) [right of=10] {};
	\node (12) [right of=11] {};
	\node (13) [right of=12] {};
	\node (14) [right of=13] {};
	\node (15) [right of=14] {};
	\node (16) [node distance=6mm,right of=15] {};
	\node (20) [below of=10] {};
	\node (21) [right of=20] {};
	\node (22) [right of=21] {};
	\node (23) [right of=22] {};
	\node (24) [right of=23] {};
	\node (25) [right of=24] {};
	\node (26) [node distance=6mm,right of=25] {};
	\node (30) [below of=20] {};
	\node (31) [right of=30] {};
	\node (32) [right of=31] {};
	\node (33) [right of=32] {};
	\node (34) [right of=33] {};
	\node (35) [right of=34] {};
	\node (36) [node distance=6mm,right of=35] {};
	\node (40) [below of=30] {};
	\node (41) [right of=40] {};
	\node (42) [right of=41] {};
	\node (43) [right of=42] {};
	\node (44) [right of=43] {};
	\node (45) [right of=44] {};
	\node (46) [node distance=6mm,right of=45] {};
	\path[->,>=latex,black!50]
		(00)	edge[thin,densely dotted] 		node[fill=white]{\scriptsize$\!\NM\!$}	(11)
		(01)	edge[thin,densely dotted] 		node[fill=white]{\scriptsize$\!\NM\!$}	(12)
		(02)	edge[thin,densely dotted] 		node[fill=white]{\scriptsize$\!\NM\!$}	(13)
		(04)	edge[thin,densely dotted] 		node[fill=white]{\scriptsize$\!\NM\!$}	(15)
		(10)	edge[thin,densely dotted] 		node[fill=white]{\scriptsize$\!\NM\!$}	(21)
		(11)	edge[thin,densely dotted] 		node[fill=white]{\scriptsize$\!\NM\!$}	(22)
		(12)	edge[thin,densely dotted] 		node[fill=white]{\scriptsize$\!\NM\!$}	(23)
		(14)	edge[thin,densely dotted] 		node[fill=white]{\scriptsize$\!\NM\!$}	(25)
		(30)	edge[thin,densely dotted] 		node[fill=white]{\scriptsize$\!\NM\!$}	(41)
		(31)	edge[thin,densely dotted] 		node[fill=white]{\scriptsize$\!\NM\!$}	(42)
		(33)	edge[thin,densely dotted] 		node[fill=white]{\scriptsize$\!\NM\!$}	(44)
		(34)	edge[thin,densely dotted] 		node[fill=white]{\scriptsize$\!\NM\!$}	(45);
	\path[->,>=latex]
		%horizontal 0
		(01)	edge[black!50]	node[above]{$\ttunit$}	(00)
		(02)	edge[black!50]	node[above]{$\ttunit$}	(01)
		(03)	edge[black!50]	node[above]{$\ttunit$}	(02)
		(04)	edge[-,dotted,black!50] 			(03)
		(05)	edge[black!50]	node[above]{$\ttunit$}	(04)
		(06)	edge[-,dotted,black!50] 			(05)
		%vertical 1
		(00)	edge 		node[left]{$p_1\phantom{'}$}	(10)
		(01)	edge[black!50]	node[right]{$\ttunit$}	(11)
		(02)	edge[black!50]	node[right]{$\ttunit$}	(12)
		(03)	edge[black!50]	node[right]{$\ttunit$}	(13)
		(04)	edge[black!50]	node[right]{$\ttunit$}	(14)
		(05)	edge[black!50]	node[right]{$\ttunit$}	(15)
		%horizontal 1
		(11)	edge 		node[below]{$\phantom{'}p_1\phantom{'}$}	(10)
		(12)	edge[black!50]	node[below]{$\phantom{'}1\phantom{'}$}		(11)
		(13)	edge[black!50]	node[below]{$\phantom{'}1\phantom{'}$}		(12)
		(14)	edge[-,dotted,black!50] 								(13)
		(15)	edge[black!50]	node[below]{$\phantom{'}1\phantom{'}$}		(14)
		(16)	edge[-,dotted,black!50] 								(15)
		%vertical 2
		(10)	edge 		node[left]{$p_2\phantom{'}$}	(20)
		(11)	edge 		node[right]{$p'_2$}	(21)
		(12)	edge[black!50]	node[right]{$\ttunit$}	(22)
		(13)	edge[black!50]	node[right]{$\ttunit$}	(23)
		(14)	edge[black!50]	node[right]{$\ttunit$}	(24)
		(15)	edge[black!50]	node[right]{$\ttunit$}	(25)
		%horizontal 2
		(21)	edge 		node[below]{$p'_1$}	(20)
		(22)	edge 		node[below]{$p'_2$}	(21)
		(23)	edge[black!50]	node[below]{$\ttunit$}	(22)
		(24)	edge[-,dotted,black!50] 			(23)
		(25)	edge[black!50]	node[below]{$\ttunit$}	(24)
		(26)	edge[-,dotted,black!50] 			(25)
		%vertical 3
		(20)	edge[-,dotted] 					(30)
		(21)	edge[-,dotted] 					(31)
		(22)	edge[-,dotted] 					(32)
		(23)	edge[-,dotted,black!50] 			(33)
		(24)	edge[-,dotted,black!50] 			(34)
		(25)	edge[-,dotted,black!50] 			(35)
		%horizontal 3
		(31)	edge 		(30)%node[below]{$p^{\tiny(k-2)}_1$}	(30)
		(32)	edge 		(31)%node[below]{$p^{\tiny(k-2)}_2$}	(31)
		(33)	edge[-,dotted]	(32)
		(34)	edge[black!50]	node[below]{$\ttunit$}	(33)
		(35)	edge[black!50]	(34)%node[below]{$\ttunit$}	(34)
		(36)	edge[-,dotted,black!50] 			(35)
		%vertical 4
		(30)	edge 		node[left]{$p_k\phantom{'}$}	(40)
		(31)	edge 		node[left]{$p'_k$}	(41)
		(32)	edge 		node[left]{$p''_k$}	(42)
		(33)	edge 		node[fill=white]{$p^{\tiny(k-1)}_k$}	(43)
		(34)	edge[black!50]	node[right]{$\ttunit$}	(44)
		(35)	edge[black!50]	node[right]{$\ttunit$}	(45)
		%horizontal 4
		(41)	edge 		node[below]{$p^{\tiny(k-1)}_1$}	(40)
		(42)	edge 		node[below]{$p^{\tiny(k-1)}_2$}	(41)
		(43)	edge[-,dotted]							(42)
		(44)	edge			node[below]{$p^{\tiny(k-1)}_k$}	(43)
		(45)	edge[black!50]	node[below]{$\phantom{'}1\phantom{'}$}	(44)
		(46)	edge[-,dotted,black!50] 					(45);
\end{scope}
\end{tikzpicture}
\end{center}
\vspace*{-15pt}
\caption{Proof of Lemma~\ref{lem-easy}: any $\Gar$-words inducing a same action (\emph{e.g.} on~$\ttunit^\omega$) are $\NM$-equivalent.}%
\label{fig-easy}
\end{figure}

\begin{proof}Let~$\MM=\Gar^*/\!\equiv_{\NM}$
and~$\Mea_{\MM,\Gar,\NM}=(\Gar,\Gar,\tau,\sigma)$ as in Definition~\ref{def-msqn}.
We have to prove that any relation in~$\presm{~\Mea_{\MM,\Gar,\NM}~}$ is a relation in~$\MM$, thereby implying for any $\Gar$-words~$\word{u}$ and~$\word{v}$:
\begin{equation*}
\sigma_{\word{u}}=\sigma_{\word{v}} \ \Longrightarrow\ \word{u}\equiv_{\NM}\word{v}.
\end{equation*}
Let~$\sigma_{p_1}\cdots\sigma_{p_k}=\sigma_{q_1}\cdots\sigma_{q_{k+\ell}}$ be some relation in~$\presm{~\Mea_{\MM,\Gar,\NM}~}$ with~$p_i\in\Gar$ for~$0\leq i\leq k$, and~$q_j\in\Gar$ for~$0\leq j\leq k+\ell$ and~$\ell\geq 0$. Any $\Gar$-word~$\word{w}$ admits hence the same image under the action of~$\sigma_{p_1}\cdots\sigma_{p_k}$ and under the action of~$\sigma_{q_1}\cdots\sigma_{q_{k+\ell}}$.
By taking for~$\word{w}$ the special word~$\ttunit^{k+\ell}$ (or any sufficiently long power of~$\ttunit$),
such a common image corresponds to some $\Gar$-word which happens to be $\NM$-equivalent
to both 
$p_1\cdots p_k$ and~$q_1\cdots q_{k+\ell}$ (see~Figure~\ref{fig-easy}).
Indeed, we define the sequence~$\left(p_i^{(j)}\right)_{\raisebox{1.3ex}{\scriptsize${0<i\leq k}\atop{0\leq j<k}$}}$ over~$\Gar$ by~$p_i^{(0)}=p_i$, $\sigma_{p_{i+1}^{(j)}}(p_{j+1}^{(i-1)})=p_{j+1}^{(i)}$,
and~$\tau_{p_{j+1}^{(i-1)}}(p_{i+1}^{(j)})=p_{i+1}^{(j+1)}$, or equivalently by~$\NMbar(p_{j+1}^{(i-1)}p_{i+1}^{(j)})=p_{i+1}^{(j+1)}p_{j+1}^{(i)}$, for~$0\leq j<i\leq k$.
The sequence~$\left(q_i^{(j)}\right)_{\raisebox{1.3ex}{\scriptsize${0<i\leq k+\ell}\atop{0\leq j<k+\ell}$}}$ is similarly defined. According to Condition~\cunit, we obtain precisely
\[\ttunit^\ell p_1\cdots p_k 
\equiv_{\NM}\ttunit^\ell  p^{\tiny(k-1)}_k\cdots p^{\tiny(k-1)}_1 
= q^{\tiny(k+\ell-1)}_{k+\ell}\cdots q^{\tiny(k+\ell-1)}_1
\equiv_{\NM} q_1\cdots q_{k+\ell}.\]
Therefore the three corresponding $\Gar$-words~$\ttunit^\ell p_1\cdots p_k$, $p_1\cdots p_k$, and~$q_1\cdots q_\ell$
represent a same element in~$\MM$ by definition.
\end{proof}

Although specific to a monoidal framework and then requiring the innocuous Condition~\cunit,
the previous straightforward proof relies only on the definition of a quadratic normalisation
and on the well-fitted associated Mealy automaton (Definition~\ref{def-msqn}).
For the bottom-approximation statement, we consider an extra assumption, which happens to be necessary and sufficient.

\begin{proposition}\label{prop-home} Assume that $\MM$ is a semigroup
with a quadratic normalisation~$(\Gar,\NM)$.
If Condition~\home\ is satisfied, then the Mealy automaton~$\Mea_{\MM,\Gar,\NM}$ generates a semigroup quotient of~$\MM$.
The converse holds provided that Condition~\cunit\ is satisfied.
\end{proposition}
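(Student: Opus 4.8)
The plan is to treat the two implications separately: the forward one constructively, the converse by contraposition.

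For the forward implication I would first reduce to a single, purely local check. By Definition~\ref{def-msqn} the stateset of $\Mea_{\MM,\Gar,\NM}$ is $\Gar$ itself, so the generators of~$\MM$ and of $\press{\Mea_{\MM,\Gar,\NM}}$ correspond under $a\mapsto\sigma_a$, and this assignment always extends to a surjective morphism from~$\MM$ onto $\press{\Mea_{\MM,\Gar,\NM}}$ as soon as it respects the defining relations of~$\MM$. Since $(\Gar,\NM)$ is quadratic, the congruence presenting~$\MM$ is generated by its length-two instances $ab\equiv_{\NM}\NMbar(ab)$, so it suffices to show that $\NMbar(ab)=cd$ forces $\sigma_{ab}=\sigma_{cd}$, that is $\sigma_b\comp\sigma_a=\sigma_d\comp\sigma_c$. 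I would in fact prove the slightly more general statement by induction on $\card{\word w}$: for every pair of length-two $\Gar$-words $\word x\equiv_{\NM}\word y$ and every $\word w\in\Gar^*$, one has $\sigma_{\word x}(\word w)=\sigma_{\word y}(\word w)$.

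Writing $\word x=x_1x_2$ and $\word w=w_1\word w_2$ and unfolding the recursion~\eref{eq-rec-def} gives
\[\sigma_{\word x}(w_1\word w_2)=\sigma_{x_2}(\sigma_{x_1}(w_1))\;\sigma_{\word x'}(\word w_2),\qquad \word x'=\tau_{w_1}(x_1)\,\tau_{\sigma_{x_1}(w_1)}(x_2).\]
The decisive ingredient is the factorisation
\[\NM(w_1x_1x_2)=\NMbar(\word x')\,\sigma_{x_2}(\sigma_{x_1}(w_1)),\]
which is exactly Condition~\home: normalising the three-letter word $w_1x_1x_2$ by the alternating sweep $\NMbar_{121}$ reaches its normal form (because $p\le 3$), the rightmost letter $\sigma_{x_2}(\sigma_{x_1}(w_1))$ produced after the first two steps is left untouched by the final $\NMbar_1$, and the remaining length-two prefix is $\NMbar(\word x')$. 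This is the one and only place where Condition~\home\ enters. From $\word x\equiv_{\NM}\word y$ I get $w_1x_1x_2\equiv_{\NM}w_1y_1y_2$, hence the two words share one normal form; reading off its last letter and its normal length-two prefix yields simultaneously $\sigma_{x_2}(\sigma_{x_1}(w_1))=\sigma_{y_2}(\sigma_{y_1}(w_1))$ and $\word x'\equiv_{\NM}\word y'$ (where $\word y'$ is the residual state word for~$\word y$), and the induction hypothesis applied to $\word x'\equiv_{\NM}\word y'$ and the shorter $\word w_2$ closes the step. I expect this factorisation --- establishing that Condition~\home\ is precisely what makes each emitted letter final while keeping the residual state words $\equiv_{\NM}$-equivalent --- to be the main obstacle of the whole statement.

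For the converse I assume Condition~\cunit\ and that $\press{\Mea_{\MM,\Gar,\NM}}$ is a quotient of~$\MM$, and argue contrapositively. If Condition~\home\ fails, then $p>3$ and there is a triple $u_1u_2u_3$ that is not $\NM$-normal and for which $\NMbar_{121}$ does not already yield $\NM(u_1u_2u_3)$; in particular the factorisation above breaks, so the bounded alternating sweep encoded by a fixed state word cannot reach the normal form. The length-three $\Gar$-words $u_1u_2u_3$ and $\NM(u_1u_2u_3)$ are $\equiv_{\NM}$-equivalent, so the quotient hypothesis would force $\sigma_{u_1u_2u_3}=\sigma_{\NM(u_1u_2u_3)}$, and I would refute this by evaluating both on a padded input $\word w\,\ttunit^m$ with $m$ large: Condition~\cunit\ renders the letters~$\ttunit$ inert, so they merely supply room for every carry to propagate to completion, whereupon the $\NM$-normal state word $\NM(u_1u_2u_3)$ outputs the genuine (reversed) normal form while the non-normal $u_1u_2u_3$ outputs a representative that is still not $\NM$-normal, hence distinct by uniqueness of normal forms.

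The delicate point of the converse is to verify that the finitely many sweeps performed by a fixed state word indeed fall short of the normal form exactly when the breadth exceeds the Condition~\home\ bound, and that the unit padding exposes this shortfall as a genuine inequality of production functions rather than absorbing it --- which is precisely what Condition~\cunit\ is there to guarantee.
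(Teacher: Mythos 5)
Your argument is correct, and both halves are in substance the paper's own proof. For the forward implication, your explicit recursion
$\sigma_{\word x}(w_1\word w_2)=\sigma_{x_2}(\sigma_{x_1}(w_1))\,\sigma_{\word x'}(\word w_2)$ together with the factorisation $\NMbar_{121}(w_1x_1x_2)=\NMbar(\word x')\,\sigma_{x_2}(\sigma_{x_1}(w_1))$ is exactly the algebraic transcription of the pasted square-diagrams of Figure~\ref{fig-proof}: the paper's induction likewise carries the two invariants ``emitted letters coincide'' and ``residual state pairs stay $\equiv_{\NM}$-equivalent'', and Condition~\home\ (more precisely $p\leq 3$) enters at the same single point. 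For the converse the organisation differs slightly but not the mechanism: the paper derives the identity $\NMbar_{121}(qcd)=\NMbar_{2121}(qcd)$ for every triple by feeding $\sigma_{a'b'}=\sigma_{c'd'}$ into Lemma~\ref{lem-easy}, whereas you contrapose and evaluate $\sigma_{u_1u_2u_3}$ and $\sigma_{\NM(u_1u_2u_3)}$ on unit padding for a witness triple; both versions rest on the same computation, namely that under Condition~\cunit\ the state~$\ttunit$ is the identity and each state deposits its residual on reading~$\ttunit$, so that $\sigma_{u_1u_2u_3}(\ttunit^m)$ is precisely the reversed, padded word $\NMbar_{121}(u_1u_2u_3)$ --- which differs from the padded normal form exactly when $p>3$. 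Two small points to tighten: your reduction of ``Condition~\home\ fails'' to ``$p>3$'' silently uses $|d-p|\leq 1$ from Definition~\ref{def-breadth} (harmless, but worth saying), and the phrase about carries ``propagating to completion'' should not be read as claiming the output is the normal form --- as you correctly note right after, for the non-normal state word it is only the non-normal representative $\NMbar_{121}(u_1u_2u_3)$, and that is precisely what makes the two production functions differ.
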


\begin{proof} Let~$\MM=\Gar^+/\!\equiv_\NM$ and  $\Mea_{\MM,\Gar,\NM}=(\Gar,\Gar,\tau,\sigma)$ as in Definition~\ref{def-msqn}.

\smallskip
$(\Leftarrow)$ Assume that Condition~\home\ is satisfied and that there exists~$(a,b,c,d)\in\Gar^4$ with~$ab\equiv_\NM cd$.
To show that~$\press{~\Mea_{\MM,\Gar,\NM}~}$ is quotient of~$\MM$, it suffices to prove $\sigma_{ab}=\sigma_{cd}$.
Without loss of generality, the word~$ab$ can be supposed to be~$\NM$-normal, that is, we can set
\begin{equation}\NM(ab)=\NM(cd)=ab.\label{eq-0}\end{equation}
Let~$\word{u}=q\word{v}\in\Gar^n$ for some~$n>0$ and~$q\in\Gar$.
We shall prove both $\sigma_{ab}(\word{u})=\sigma_{cd}(\word{u})$ (letterwise)
and~$\tau_{\word{u}}(ab)\equiv_\NM\tau_{\word{u}}(cd)$ by induction on~$n>0$.
For~$n=1$, we obtain the two square-diagrams on Figure~\ref{fig-proof} (left) (reproduced on page~\pageref{fig-proof}),
that is,
\begin{align}
\NM(qa)&=a'q_0',\label{eq-1}\\
\NM(qc)&=c'q_1',\label{eq-3}\\
\NM(q_0'b)&=b'q_0'',\label{eq-2}\\
\hbox{ and }\NM(q_1'c)&=d'q_1''\label{eq-4}
\end{align}
for some~$a',b',c',d',q_0',q_0'',q_1'q_1''\in\Gar$.
\begin{figure}[b]
\vspace*{-25pt}
\centering
\begin{tikzpicture}[thick,node distance=15mm]
	\node (00) at (0.5,2) {};
	\node (01) [right of=00] {};
	\node (10) [below of=00] {};
	\node (11) [right of=10] {};
	\node (20) [below of=10] {};
	\node (21) [right of=20] {};
	\path[->,>=latex]
		(01)	edge 		node[above]{$q$}	(00)
		(00)	edge 		node[left]{$\phantom{'}a$}		(10)
		(10)	edge 		node[left]{$\phantom{'}b$}		(20)
		(01)	edge 		node[right]{$a'$}	(11)
		(11)	edge 		node[below]{$q_0'$}	(10)
		(11)	edge 		node[right]{$b'$}	(21)
		(21)	edge 		node[below]{$q_0''$}	(20)
		(00)	edge[thin,densely dotted,black!50] 		node[fill=white]{$\!\NM\!$}	(11)
		(10)	edge[thin,densely dotted,black!50] 		node[fill=white]{$\!\NM\!$}	(21)
		;
	\begin{scope}[thick,node distance=15mm,xshift=30mm]
	\node (00) at (0,2) {};
	\node (01) [right of=00] {};
	\node (10) [below of=00] {};
	\node (11) [right of=10] {};
	\node (20) [below of=10] {};
	\node (21) [right of=20] {};
	\path[->,>=latex]
		(01)	edge 		node[above]{$q$}	(00)
		(00)	edge 		node[left]{$\phantom{'}c$}		(10)
		(10)	edge 		node[left]{$\phantom{'}d$}		(20)
		(01)	edge 		node[right]{$c'$}	(11)
		(11)	edge 		node[below]{$q_1'$}	(10)
		(11)	edge 		node[right]{$d'$}	(21)
		(21)	edge 		node[below]{$q_1''$}	(20)
		(00)	edge[thin,densely dotted,black!50] 		node[fill=white]{$\!\NM\!$}	(11)
		(10)	edge[thin,densely dotted,black!50] 		node[fill=white]{$\!\NM\!$}	(21)
		;
	\end{scope}
	\node at (9.6,.5) {
	 \rotatebox{-15}{
 \begin{tikzpicture}[thick,node distance=15mm,inner sep=1.3pt]
	\node[inner sep=4pt] (00) at (0,0) {};
	\node (10) [above of=00] {};
	\node (11) [left of=10] 	{};
	\node (12) [right of=10] 	{};
	\node (13) [left of=11] 	{};
	\node (14) [right of=12] 	{};
	\node (20) [above of=10] {};
	\node (21) [above of=11] {};
	\node (22) [above of=12] {};
	\node (23) [above of=13] {};
	\node (24) [above of=14] {};
	\node (30) [above of=20] {};
	\node (11b) [node distance=13mm,left of=10] {};
	\node (12b) [node distance=13mm,right of=10] {};
	\node (13b) [node distance=11mm,left of=11] {};
	\node (21b) [node distance=13mm,left of=20] {};
	\node (22b) [node distance=13mm,right of=20] {};
	\node (23b) [node distance=11mm,left of=21] {};
	\node (24b) [node distance=11mm,right of=22] {};
	\fill[color=lightgray] (0,0) -- (-3,1.5) -- (-3,3) -- (0,4.5) -- (0,0);
	\fill[color=lightgray] (0,0) -- (3,3) -- (0,4.5) -- (0,0);
	\draw[color=white,line width=9pt] (0,0) -- (-3,1.5) -- (-3,3) -- (0,4.5) -- (0,0);
	\draw[color=white,line width=9pt] (0,0) -- (3,3) -- (0,4.5) -- (0,0);
	\draw[color=white,line width=9pt] (0,1.5) -- (1.5,3);
	\draw[color=white,line width=9pt] (1.5,1.5) -- (1.5,3) -- (0,4.5);
	\draw[color=white,line width=9pt] (-3,1.5) -- (-1.5,3) -- (0,4.5);
	\draw[color=white,line width=9pt] (-1.5,1.5) -- (0,3);
	\draw[color=white,line width=9pt] (0,0) -- (-1.5,1.5) -- (-1.5,3);
	\path[->,>=latex,line width=1.4pt]	(30)	edge 		node[pos=.42,fill=white,rotate=15]{\footnotesize$\!q\!$} (20);
	\path[->,>=latex,line width=1.4pt]	(20)	edge			node[pos=.42,fill=white,rotate=15]{\footnotesize$\!c\!$} (10);
	\path[->,>=latex,line width=1.4pt]	(10)	edge			node[pos=.42,fill=white,rotate=15]{\footnotesize$\!d\!$} (00);
	\path[->,>=latex] 	(30)	edge		node[pos=.42,fill=white,rotate=15,text height=5pt,pos=.4,left=-.07]{\footnotesize$a'\!$} (21);
	\path[->,>=latex]	(21)	edge		node[pos=.42,fill=white,rotate=15,text height=6pt]{\footnotesize$\!\!q_0'\!\!$} (11);
	\path[->,>=latex]	(11)	edge		node[pos=.42,fill=white,rotate=15,text height=5pt]{\footnotesize$b$} (00);
	\path[->,>=latex]	(20)	edge		node[pos=.42,fill=white,rotate=15]{\footnotesize$a$} (11);
	\path[->,>=latex]	(30)	edge		node[pos=.42,fill=white,rotate=15,text height=5pt]{\footnotesize$\!c'\!$} (22);
	\path[->,>=latex]	(22)	edge		node[pos=.42,fill=white,rotate=15,text height=3pt,left=-.095,white,pos=.4]{\footnotesize$\!w$}
								node[pos=.42,rotate=15,text height=3pt,left=-.095]{\footnotesize$\!q_1'\hspace*{-3.05pt}$} (10);
	\path[->,>=latex]	(12)	edge		node[pos=.42,fill=white,rotate=15,below=-.1]{\footnotesize$q_1''$} (00);
	\path[->,>=latex]	(22)	edge		node[pos=.42,fill=white,rotate=15]{\footnotesize$\!d'\!$} (12);
	\path[->,>=latex]	(21)	edge		node[pos=.42,fill=white,rotate=15,text height=4pt,pos=.4,left=-.1]{\footnotesize$b'\hspace*{-2.05pt}$} (13);
	\path[->,>=latex]	(13)	edge		node[pos=.42,fill=white,rotate=15,below=-.05]{\footnotesize$q_0''$} (00);
	\path[->,>=latex]	(30)	edge		node[pos=.42,fill=white,rotate=15,above=-.1]{\footnotesize$a''$} (23);
	\path[->,>=latex]	(23)	edge		node[pos=.42,fill=white,rotate=15,left=-.1]{\footnotesize$b''$} (13);
	\path[->,>=latex]	(30)	edge		node[pos=.42,fill=white,rotate=15,above=-.05]{\footnotesize$c''$} (24);
	\path[->,>=latex]	(24)	edge		node[pos=.42,fill=white,rotate=15,below=-.1]{\footnotesize$d''$} (12);
	\path[->,>=latex]
	(10)	edge[densely dotted,white] 		node[fill=lightgray]{$\!\NM\!$}	(11b)
	(10)	edge[densely dotted,white] 		node[fill=lightgray]{$\!\NM\!$}	(12b)
	(11)	edge[densely dotted,white] 		node[fill=lightgray]{$\!\NM\!$}	(13b)
	(20)	edge[densely dotted,white] 		node[fill=lightgray]{$\!\NM\!$}	(21b)
	(20)	edge[densely dotted,white] 		node[fill=lightgray]{$\!\NM\!$}	(22b)
	(21)	edge[densely dotted,white] 		node[fill=lightgray]{$\!\NM\!$}	(23b)
	(22)	edge[densely dotted,white] 		node[fill=lightgray]{$\!\NM\!$}	(24b);
\end{tikzpicture}
\hspace*{-20pt}
}
};
\end{tikzpicture}
\vspace*{-30pt}
\caption{Proof of Proposition~\ref{prop-home}: initial data (left) can be pasted into Condition~\home\ (right).}%
\label{fig-proof}
\end{figure}
As illustrated in Figure~\ref{fig-proof} (right), we obtain
\[\NMbar_{2121}(qcd)\stackrel{\eqref{eq-0}}{=}\NMbar_{121}(qab)\stackrel{\eqref{eq-1}}{=}\NMbar_{21}(a'q_0'b)\stackrel{\eqref{eq-2}}{=}\NMbar_{1}(a'b'q_0'')\stackrel{\eqref{eq-5}}{=}a''b''q_0'',\]
and\[\NMbar_{121}(qcd)\stackrel{\eqref{eq-3}}{=}\NMbar_{21}(c'q_1'd)\stackrel{\eqref{eq-4}}{=}\NMbar_{1}(c'd'q_1'')\stackrel{\eqref{eq-6}}{=}c''d''q_1'',\]
where~$a'',b'',c'',d''\in\Gar$ are defined by
\begin{align}
\NM(a'b')&=a''b''\label{eq-5}\\
\hbox{ and }\NM(c'd')&=c''d''.\label{eq-6}
\end{align}
Condition~\home\ implies~$\NMbar_{2121}(qcd)=\NMbar_{121}(qcd)$, that is, $a''b''q_0''=c''d''q_1''$.
This means that~$q_0''=q_1''$, $a''=c''$, and~$b''=d''$ hold.
On the one hand, the conjunction of the latter two gives~$\NM(a'b')=\NM(c'd')$ by~Equations~\eqref{eq-5} and~\eqref{eq-6}, that is
\begin{equation}a'b'\equiv_\NM c'd'.\label{eq-nxt}\end{equation}
On the other hand, $q_0''=q_1''$ means~$\sigma_{ab}(q)=\sigma_{cd}(q)$, which simply concludes the case~$n=1$.
Then Equation~\eqref{eq-nxt} allows to proceed the induction and to finally prove the implication~$(\Leftarrow)$:
by induction hypothesis, $a'b'\equiv_\NM c'd'$ implies~$\sigma_{a'b'}(\word{v})=\sigma_{c'd'}(\word{v})$ and~$\tau_{a'b'}(\word{v})\equiv_\NM\tau_{c'd'}(\word{v})$. From the first equality, we obtain~$\sigma_{ab}(\word{u})=\sigma_{cd}(\word{u})$ after left-appending~$\sigma_{ab}(q)=q_0''=q_1''=\sigma_{cd}(q)$.
From the second equivalence, we conclude~$\tau_{ab}(\word{u})=\tau_{a'b'}(\word{v})\equiv_\NM\tau_{c'd'}(\word{v})=\tau_{cd}(\word{u})$.

\medskip
$(\Rightarrow)$ Assume that $\press{~\Mea_{\MM,\Gar,\NM}~}$ is a quotient of~$\MM$, that is, $\word{u}\equiv_\NM\word{v}$ implies~$\sigma_{\word{u}}=\sigma_{\word{v}}$ for any words~$\word{u}$ and~$\word{v}$ over~$\Gar$. Consider an arbitrary length~3 word over~$\Gar$, say~$qcd\in\Gar^3$.
Let~$a,b$ denote the elements in~$\Gar$ satisfying
\begin{equation}\NM(cd)=ab.\label{eq-abcd}\end{equation}By definition, we deduce~$ab\equiv_\NM cd$.
This implies~$\sigma_{ab}=\sigma_{cd}$ by hypothesis.
In particular, the images of any nonempty word~$q\word{v}$ under~$\sigma_{ab}$ and under~$\sigma_{cd}$ coincide (letterwise). Now
$\sigma_{ab}(q\word{v})=\sigma_{cd}(q\word{v})$ decomposes into
\begin{equation}\sigma_{ab}(q)=q''_0=q''_1=\sigma_{cd}(q)\label{eq-q}
\end{equation}
and
\begin{equation}\sigma_{\tau_q(ab)}(\word{v})=\sigma_{a'b'}(\word{v})=\sigma_{c'd'}(\word{v})=\sigma_{\tau_q(cd)}(\word{v}),\label{eq-last}
\end{equation}
with (see Figure~\ref{fig-proof} (left) again)
\vspace*{-5pt}

\qquad\begin{minipage}{.45\textwidth}
\begin{align}
a'&=\tau_q(a),		&q_0'&=\sigma_a(q),\label{eq-A}\\
c'&=\tau_q(c),		&q_1'&=\sigma_c(q),	\label{eq-B}
\end{align}
\end{minipage}
\hfill\begin{minipage}{.45\textwidth}
\begin{align}
b'&=\tau_{q_0'}(b),	&q_0''&=\sigma_b(q_0'),\label{eq-C}\\
d'&=\tau_{q_1'}(d),	&q_1''&=\sigma_d(q_1').\label{eq-D}
\end{align}
\end{minipage}

~

Equality~\eqref{eq-last} holds for any original word~$\word{v}\in\Gar^*$ and therefore implies~$\sigma_{a'b'}=\sigma_{c'd'}$.
Now, whenever Condition~\cunit\ is satisfied, we deduce
\begin{equation}
\NM(a'b')=\NM(c'd')\label{eq-prime}
\end{equation}
according to Lemma~\ref{lem-easy}. For any such arbitrary word~$qcd\in\Gar^3$, we obtain
\[\begin{array}{rcccccccc}
\NMbar_{121}(qcd)&
\stackrel{\eqref{eq-B}}{=}		&\NMbar_{21}(c'q'_1d)&
\stackrel{\eqref{eq-D}}{=}		&\NMbar_{1}(c'd'q''_1)&
\stackrel{\eqref{eq-prime}}{=}	&\NMbar(a'b')q_1''\\
&&&&&&\raisebox{1.3ex}{\rotatebox{270}{$\stackrel{\rotatebox{90}{\scriptsize\eqref{eq-q}}}{=\!=}$}}\\
\NMbar_{2121}(qcd)&
\stackrel{\eqref{eq-abcd}}{=}	&\NMbar_{121}(qab)&
\stackrel{\eqref{eq-A}}{=}	&\NMbar_{21}(a'q_0'b)&
\stackrel{\eqref{eq-C}}{=}	&\NMbar_{1}(a'b'q_0'')&
\end{array}\]
Therefore $(\Gar,\NM)$ satisfies Condition~\home.
\end{proof}

\newpage
{Gathering Lemma~\ref{lem-easy} and Proposition~\ref{prop-home},
we obtain the following main result.
\par\nobreak
\begin{theorem}\label{thm-main} Assume that $\MM$ is a monoid
with a quadratic normalisation~$(\Gar,\NM)$ satisfying Conditions~\cunit\ and~\home.
Then the Mealy automaton~$\Mea_{\MM,\Gar,\NM}$ generates a monoid isomorphic to~$\MM$.
\end{theorem}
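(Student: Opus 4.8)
The plan is to read the conclusion as the coincidence of two congruences on the free monoid $\Gar^*$ and to extract each inclusion from one of the two preceding results, so that no fresh diagram-chasing is needed. Write $\equiv_\NM$ for the congruence with $\MM=\Gar^*/\!\equiv_\NM$, and let $\sim$ be the congruence on $\Gar^*$ defined by $\word{u}\sim\word{v}$ if and only if $\sigma_{\word{u}}=\sigma_{\word{v}}$, so that by Definition~\ref{def-automatonsem} one has $\presm{\Mea_{\MM,\Gar,\NM}}=\Gar^*/\!\sim$. Both $\MM$ and $\presm{\Mea_{\MM,\Gar,\NM}}$ are thus quotients of the \emph{same} free monoid under the identity on generators, so the whole theorem reduces to the single equality $\sim\,=\,\equiv_\NM$: once the two congruences agree, the surjection induced by $q\mapsto\sigma_q$ has trivial kernel and is the desired isomorphism.

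First I would invoke Lemma~\ref{lem-easy}. Since $\MM$ is a monoid whose quadratic normalisation satisfies Condition~\cunit, its hypotheses hold verbatim after the renaming $\MM^\ttunit=\MM$, $\Gar^\ttunit=\Gar$, $\NM^\ttunit=\NM$, and its conclusion is exactly the implication $\sigma_{\word{u}}=\sigma_{\word{v}}\Rightarrow\word{u}\equiv_\NM\word{v}$, i.e. the inclusion $\sim\,\subseteq\,\equiv_\NM$ (concretely, that $\MM$ is a quotient of the automaton monoid). Next I would invoke the $(\Leftarrow)$ direction of Proposition~\ref{prop-home}: since Condition~\home\ is assumed, $\Mea_{\MM,\Gar,\NM}$ generates a semigroup quotient of~$\MM$, which unwinds to $\word{u}\equiv_\NM\word{v}\Rightarrow\sigma_{\word{u}}=\sigma_{\word{v}}$, that is, the reverse inclusion $\equiv_\NM\,\subseteq\,\sim$. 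Combining the two inclusions gives $\sim\,=\,\equiv_\NM$, whence the two quotients of $\Gar^*$ literally coincide and $\sigma_q\mapsto q$ is an isomorphism of monoids.

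I do not expect a deep obstacle here, because the substance has already been discharged in Lemma~\ref{lem-easy} and Proposition~\ref{prop-home}; the result is essentially their conjunction. The only point requiring genuine care is a bookkeeping one: verifying that the monoid~$\MM$ of the statement may indeed play the role of the adjoined-unit monoid~$\MM^\ttunit$ of Lemma~\ref{lem-easy} — namely that the unit~$\ttunit$ is available as a letter of~$\Gar$ and that Condition~\cunit\ is precisely what governs its normalisation — so that the lemma applies without having to re-adjoin a unit. Once that identification is in place, both congruence inclusions follow directly from the cited results, and the passage from ``quotient in both directions, via congruences with a common domain $\Gar^*$'' to ``isomorphic'' is immediate and does not run into the usual non-Hopfian pitfalls, since we are comparing the two congruences themselves rather than merely the two quotient monoids.
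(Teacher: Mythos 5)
Your proposal is correct and follows essentially the same route as the paper: the published proof likewise observes that the two monoids share the generating set~$\Gar$ and then obtains the two inclusions of defining relations from Proposition~\ref{prop-home} and Lemma~\ref{lem-easy} respectively, which is exactly your equality of congruences $\sim\,=\,\equiv_\NM$. Your explicit remark that Condition~\cunit\ is what licenses applying Lemma~\ref{lem-easy} with $\MM$ itself in the role of~$\MM^\ttunit$ is a useful bookkeeping point that the paper leaves implicit.
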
}

\begin{proof}By construction, $\MM$ and $\presm{~\Mea_{\MM,\Gar,\NM}~}$ share a same generating subset~$\Gar$.
Now, any defining relation for~$\MM$ maps to a defining relation for~$\presm{~\Mea_{\MM,\Gar,\NM}~}$ by Proposition~\ref{prop-home},
and conversely by Lemma~\ref{lem-easy}.
\end{proof}

\begin{corollary}\label{cor-resid} Any monoid with a quadratic normalisation satisfying Conditions~\cunit\ and~\home\ is residually finite.
\end{corollary}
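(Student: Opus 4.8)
The plan is to read off this corollary from Theorem~\ref{thm-main} together with the standard fact that every monoid generated by a (finite) Mealy automaton is residually finite. Indeed, Theorem~\ref{thm-main} asserts that a monoid $\MM$ satisfying the hypotheses is isomorphic to $\presm{\Mea_{\MM,\Gar,\NM}}$, so it suffices to establish that any automaton monoid $\presm{\aut{A}}$ with $\aut{A}=(\Sigma,\Sigma,\tau,\sigma)$ over a \emph{finite} alphabet $\Sigma$ is residually finite, and then transport the property through the isomorphism. Here the relevant alphabet is $\Sigma=\Gar$, which is finite since $\Mea_{\MM,\Gar,\NM}$ is a Mealy automaton in the sense of Definition~\ref{def-mealy}.

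First I would exploit that $\presm{\aut{A}}$ acts faithfully by length-preserving and prefix-preserving maps on the tree $\Sigma^*$. Since $\Sigma=\Gar$ is finite, for each $n\geq 1$ the level $\Sigma^n$ is a finite set, and hence the monoid $\mathrm{Map}(\Sigma^n,\Sigma^n)$ of all its self-maps is finite. I would then define, for each $n$, the restriction homomorphism
\[
\phi_n\colon\presm{\aut{A}}\longrightarrow\mathrm{Map}(\Sigma^n,\Sigma^n),
\]
sending each element (a length-preserving map on $\Sigma^*$) to its restriction to words of length $n$. This is well defined precisely because production functions preserve length, and it is a monoid morphism because restriction to $\Sigma^n$ commutes with composition. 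Thus each $\phi_n$ is a morphism onto a submonoid of a finite monoid.

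Next I would check that the family $(\phi_n)_{n\geq1}$ separates points. By Definition~\ref{def-automatonsem} the elements of $\presm{\aut{A}}$ are genuinely distinct mappings $\Sigma^*\to\Sigma^*$, so if $g\neq h$ there is a $\Gar$-word $\word{w}$ with $g(\word{w})\neq h(\word{w})$; taking $n=\card{\word{w}}$ gives $\phi_n(g)\neq\phi_n(h)$. Hence distinct elements of $\presm{\aut{A}}$ are separated by a morphism onto a finite monoid, which is exactly residual finiteness. Applying the isomorphism $\MM\cong\presm{\Mea_{\MM,\Gar,\NM}}$ of Theorem~\ref{thm-main} then yields the claim for $\MM$ itself.

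There is essentially no obstacle here: all the genuine work is already packaged in Theorem~\ref{thm-main}, and the remaining argument is the elementary observation (implicit in the introductory remark that non-residually-finite semigroups cannot be automaton semigroups) that automaton monoids over a finite alphabet are residually finite via their level-$n$ quotients. The only points needing (routine) care are that each $\phi_n$ really is a homomorphism into a finite monoid, which rests on length-preservation of the production functions, and that finiteness of $\Gar$ guarantees each target monoid is finite.
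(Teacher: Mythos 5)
Your proposal is correct and follows exactly the route the paper intends: the corollary is stated without proof as an immediate consequence of Theorem~\ref{thm-main} combined with the standard fact (alluded to in the introduction when excluding the bicyclic monoid) that a monoid of length-preserving transformations of~$\Gar^*$ over a finite alphabet is residually finite via its restrictions to the finite levels~$\Gar^n$. Your filling-in of that standard fact, including the separation of points by the morphisms~$\phi_n$, is accurate and complete.
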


To conclude this main section, we come back to that remark (following Definition~\ref{def-automaticsem})
about the transducer approach by~Thurston.

\begin{definition}\label{def-thurston} 
With any quadratic normalisation~$(\Gar,\NM)$ is associated its \emph{Thurston transducer}
defined as the Mealy automaton~$\Thu_{\Gar,\NM}$ with stateset~$\Gar$, alphabet~$\Gar$, 
and transitions as follows:
\par\nobreak
\begin{center}
\vspace*{-5pt}
\begin{tikzpicture}[thick,node distance=13mm]
	\node (00) at (0,0) {};
	\node (01) [right of=00] {};
	\node (10) [below of=00] {};
	\node (11) [right of=10] {};
	\path[->,>=latex]
		(01)	edge 		node[above]{$a$}	(00)
		(00)	edge 		node[left]{$b$}		(10)
		(01)	edge 		node[right]{$d$}	(11)
		(11)	edge 		node[below]{$c$}	(10)
		(00)	edge[thin,densely dotted] 		node[fill=white]{$\!\NM\!$}	(11);
	\node[draw,circle,text width=3mm,align=center,inner sep=1.2pt] (quad-n-transi-left) at (5,-.8) {$a$};
	\node[draw,circle,text width=3mm,align=center,inner sep=1.2pt,node distance=18mm] (quad-n-transi-right) [right of=quad-n-transi-left] {$c$};
	\path[->,>=latex] (quad-n-transi-left) edge[bend left] node[align=center,above]{\(\!b\,|\,d\!\)} (quad-n-transi-right);
\end{tikzpicture}
\end{center}
\end{definition}

\begin{corollary}\label{cor-duality}Assume that $\MM$ is a monoid 
with a quadratic normalisation~$(\Gar,\NM)$ satisfying Conditions~\cunit\ and~\home.
The Thurston transducer~$\Thu_{\Gar,\NM}$ and the Mealy automaton~$\Mea_{\MM,\Gar,\NM}$ being dual automaton,
$\MM$ possesses both the explicitly dual properties of automaticity and self-similarity.
\end{corollary}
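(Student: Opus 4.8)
The plan is to prove the statement in three movements: first establish that $\Thu_{\Gar,\NM}$ is literally the dual of $\Mea_{\MM,\Gar,\NM}$, then read off selfsimilarity from the main theorem applied to $\Mea_{\MM,\Gar,\NM}$, and finally read off right-automaticity from the Thurston transducer $\Thu_{\Gar,\NM}$. The whole point is that both structures are carried by one and the same pair of dual automata, so once the duality is checked the two properties fall out of results already in hand.

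For the duality I would compare the two transition rules directly. Both automata are built over the single normalisation square $\NM(ab)=\tau_a(b)\sigma_b(a)$ of Definition~\ref{def-msqn}, but they assign the two $\Gar$-coordinates to the state and the letter in opposite ways. In $\Mea_{\MM,\Gar,\NM}$ the cross-diagram reads $b\xrightarrow{a\,\mid\,\sigma_b(a)}\tau_a(b)$ (state $b$, input letter $a$, output $\sigma_b(a)$, target $\tau_a(b)$), whereas Definition~\ref{def-thurston} gives $\Thu_{\Gar,\NM}$ the transition $a\xrightarrow{b\,\mid\,\tau_a(b)}\sigma_b(a)$ (state $a$, input letter $b$) for the very same square. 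Applying the defining equivalence $x\xrightarrow{i\mid j}y\iff i\xrightarrow{x\mid y}j$ of the dual automaton to the former yields exactly the latter, so $\dz(\Mea_{\MM,\Gar,\NM})=\Thu_{\Gar,\NM}$. The only thing to watch here is the reading direction of the square-diagrams; since every normalisation square has deliberately been drawn backward so that it coincides with its cross-diagram, this step is routine bookkeeping.

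Selfsimilarity is then immediate: Conditions~\cunit\ and~\home\ are in force, so Theorem~\ref{thm-main} says $\Mea_{\MM,\Gar,\NM}$ generates a monoid isomorphic to $\MM$; thus $\MM$ is an automaton monoid, its selfsimilar structure being carried by the production functions $\sigma_q$ of the states of $\Mea_{\MM,\Gar,\NM}$. For automaticity I would exhibit a right-automatic structure in the sense of Definition~\ref{def-automaticsem} using $\Thu_{\Gar,\NM}$. The language $\NF(\MM)$ of $\NM$-normal words is regular because $(\Gar,\NM)$ is quadratic: by the first clause of Definition~\ref{def-quad} a word is $\NM$-normal exactly when each of its length-two factors is, so $\NF(\MM)$ is the local language avoiding the finitely many non-normal pairs of $\Gar^2$. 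The multiplier languages $\mathcal{L}_q$ are then recognised by iterating the runs of $\Thu_{\Gar,\NM}$ as described in Section~\ref{sec-prelim} and Remark~\ref{rem-thurston}: each run outputs one letter of the new normal form together with a word still to be normalised, and Condition~\home\ (the domino rule) is precisely what guarantees that this local propagation terminates and returns the genuine normal form of the right-product.

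The hardest part is not any single computation but making the automaticity half fully rigorous, that is, checking that the iterated-run description of $\Thu_{\Gar,\NM}$ really packages into a synchronous (letter-to-letter, suitably padded) transducer recognising each $\mathcal{L}_q$; the duality verification and the selfsimilarity half are essentially bookkeeping once Theorem~\ref{thm-main} is granted. Gathering the three movements, the semigroup generated by $\Mea_{\MM,\Gar,\NM}$ supplies the selfsimilar structure while its dual $\Thu_{\Gar,\NM}$, run iteratively, supplies the automatic one, so $\MM$ carries automaticity and selfsimilarity as the two dual facets of a single automaton, which is exactly the assertion.
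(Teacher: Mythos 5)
Your proposal is correct in substance and follows the route the paper intends: the paper in fact prints no proof of this corollary, treating it as the conjunction of the duality check between Definitions~\ref{def-msqn} and~\ref{def-thurston}, Theorem~\ref{thm-main} for selfsimilarity, and Remark~\ref{rem-thurston} for automaticity. Your verification that $\dz(\Mea_{\MM,\Gar,\NM})=\Thu_{\Gar,\NM}$ is exactly right (both automata are built from the same square $\NM(ab)=\tau_a(b)\sigma_b(a)$ with state and letter swapped), and so is the regularity of $\NF(\MM)$ as a local language via the first clause of Definition~\ref{def-quad}. The one place where you route the argument through the wrong automaton is the multiplier languages: a pair $(\NF(a),\NF(aq))$ is witnessed by the \emph{row} of squares in the diagram following Definition~\ref{def-msqn}, i.e.\ by a \emph{single} run of $\Mea_{\MM,\Gar,\NM}$ started in state $q$ on $\NF(a)$ read backward (output $s_1'\cdots s_n'$, final state $q_n$ supplying the padding letter), which immediately gives a synchronous letter-to-letter recogniser for $\mathcal{L}_q$ once $\NF(\MM)$ is regular. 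Iterated runs of $\Thu_{\Gar,\NM}$ correspond to \emph{columns} of the tiling and compute the normal form of an arbitrary word; they are the right tool for Remark~\ref{rem-thurston}'s normalisation algorithm but not the direct witness for regularity of $\mathcal{L}_q$. This is exactly the step you flag as the hardest, and it dissolves once you switch to single runs of $\Mea$; with that substitution your three movements reproduce the paper's argument.
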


These rather unexpected results provide 
the very first bridge between two 
fundamental areas that have always been widely seen as irreconcilable: automatic semigroups \emph{vs} automaton semigroups. We choose to conclude by gathering several carefully selected examples, counterexamples, and open problems.

%%%%%%%%%%%%%%%%%%%%%%%%%%%%%%%%%%%%%%%%%%%%%%%%%%%%%%%%%%%%%
%%%%%%%%%%%%%%%%%%%%%%%%%%%%%%%%%%%%%%%%%%%%%%%%%%%%%%%%%%%%%
%%%%%%%%%%%%%%%%%%%%%%%%%%%%%%%%%%%%%%%%%%%%%%%%%%%%%%%%%%%%%
%B3 AUTOMATIC%%%%%%%%%%%%%%%%%%%%%%%%%%%%%%%%%%%%%%%%%%%%%%%%%%%
\newcommand\sepon{
\,\scalebox{.1}{\begin{tikzpicture}[scale=1.3]
\draw[-,line width=12pt,loosely dashed] (0,0) -- (0,2.05);
\end{tikzpicture}
}\,}

\newcommand\BRone{1}
\newcommand\BRa{\sigma_{\!1}}
\newcommand\BRb{\sigma_{\!2}}
\newcommand\BRab{\sigma_{\!1}\hspace*{-1pt}\sigma_{\!2}}
\newcommand\BRba{\sigma_{\!2}\hspace*{-.4pt}\sigma_{\!1}}
\newcommand\BRd{\Delta}

\colorlet{couleurTransition}{orange!60!black}
\newcommand\sepp{~~}
\renewcommand\sepp{
~\scalebox{.1}{\begin{tikzpicture}[rotate=-90,scale=1.3]
\draw[-,line width=12pt,loosely dashed] (0,0) -- (2.2,0);
\end{tikzpicture}
}~}
\newcommand\BRICd{
\scalebox{.1}{\begin{tikzpicture}[rotate=-90,scale=1.3]
\braid[-,line width=6pt, style strands={1}{orange}, style strands={2}{violet}, style strands={3}{gray},number of strands=3,height=12mm] s_1^{-1} s_2^{-1} s_1^{-1};
\end{tikzpicture}
}}
\newcommand\BRICone{
\scalebox{.1}{\begin{tikzpicture}[rotate=-90,scale=1.3]
\braid[-,line width=6pt, style strands={1}{orange}, style strands={2}{violet}, style strands={3}{gray},number of strands=3,border height=21mm] ;
\end{tikzpicture}
}}
\newcommand\BRICa{
\scalebox{.1}{\begin{tikzpicture}[rotate=-90,scale=1.3]
\braid[-,line width=6pt, style strands={1}{orange}, style strands={2}{violet}, style strands={3}{gray},number of strands=3,height=36mm] at (0,0) s_1^{-1};
\end{tikzpicture}
}}
\newcommand\BRICb{
\scalebox{.1}{\begin{tikzpicture}[rotate=-90,scale=1.3]
\braid[-,line width=6pt, style strands={1}{orange}, style strands={2}{violet}, style strands={3}{gray},number of strands=3,height=36mm] at (0,0) s_2^{-1};
\end{tikzpicture}
}}
\newcommand\BRICab{
\scalebox{.1}{\begin{tikzpicture}[rotate=-90,scale=1.3]
\braid[-,line width=6pt, style strands={1}{orange}, style strands={2}{violet}, style strands={3}{gray},number of strands=3,height=18mm] at (0,0) s_1^{-1}s_2^{-1};
\end{tikzpicture}
}}
\newcommand\BRICba{
\scalebox{.1}{\begin{tikzpicture}[rotate=-90,scale=1.3]
\braid[-,line width=6pt, style strands={1}{orange}, style strands={2}{violet}, style strands={3}{gray},number of strands=3,height=18mm] at (0,0) s_2^{-1}s_1^{-1};
\end{tikzpicture}
}}
\newcommand\BRICdS{
\scalebox{.1}{\begin{tikzpicture}[scale=1.6]
\braid[-,line width=6pt, style strands={1}{orange}, style strands={2}{violet}, style strands={3}{gray},number of strands=3,height=6mm] s_1^{-1} s_2^{-1} s_1^{-1};
\end{tikzpicture}
}}
\newcommand\BRIConeS{
\scalebox{.1}{\begin{tikzpicture}[scale=1.6]
\braid[-,line width=6pt, style strands={1}{orange}, style strands={2}{violet}, style strands={3}{gray},number of strands=3,border height=10.5mm] ;
\end{tikzpicture}
}}
\newcommand\BRICaS{
\scalebox{.1}{\begin{tikzpicture}[scale=1.6]
\braid[-,line width=6pt, style strands={1}{orange}, style strands={2}{violet}, style strands={3}{gray},number of strands=3,height=18mm] s_1^{-1};
\end{tikzpicture}
}}
\newcommand\BRICbS{
\scalebox{.1}{\begin{tikzpicture}[scale=1.6]
\braid[-,line width=6pt, style strands={1}{orange}, style strands={2}{violet}, style strands={3}{gray},number of strands=3,height=18mm] s_2^{-1};
\end{tikzpicture}
}}
\newcommand\BRICabS{
\scalebox{.1}{\begin{tikzpicture}[scale=1.6]
\braid[-,line width=6pt, style strands={1}{orange}, style strands={2}{violet}, style strands={3}{gray},number of strands=3,height=9mm] s_1^{-1}s_2^{-1};
\end{tikzpicture}
}}
\newcommand\BRICbaS{
\scalebox{.1}{\begin{tikzpicture}[scale=1.6]
\braid[-,line width=6pt, style strands={1}{orange}, style strands={2}{violet}, style strands={3}{gray},number of strands=3,height=9mm] s_2^{-1}s_1^{-1};
\end{tikzpicture}
}}

\newcommand\BThreeAutomatic{
\scalebox{.51}{
\begin{tikzpicture}[node distance=35mm,inner sep=1.5pt]
	 	\node[draw=orange,thin,circle,text width=6mm,align=center] (1) {$\BRICone$};
	 	\node[draw=orange,thin,circle,text width=6mm,align=center] (a) [above left of=1] {$\BRICa$};
	 	\node[draw=orange,thin,circle,text width=6mm,align=center] (b) [above right of=1] {$\BRICb$};
	 	\node[draw=orange,thin,circle,text width=6mm,align=center] (ab) [above of=a] {$\BRICab$};
	 	\node[draw=orange,thin,circle,text width=6mm,align=center] (ba) [above of=b] {$\BRICba$};
	 	\node[draw=orange,thin,circle,text width=6mm,align=center] (d) [above left of=ba] {$\BRICd$};
\begin{scope}[draw=couleurTransition,thick,couleurTransition]
\path[->,>=latex] (1)	edge[loop below]		node[scale=.8]{$\BRIConeS\sepp\BRIConeS$}	 (1);
\path[->,>=latex] (1)	edge			node[scale=.8,left=.2,pos=.4]{$\BRICaS\sepp\BRIConeS$}
							node[scale=.8,right=1,pos=.63]{$\BRICdS\sepp\BRIConeS$}		(a);
\path[->,>=latex] (1)	edge			node[scale=.8,right=.15,pos=.4]{$\BRICbS\sepp\BRIConeS$}	 (b);
\draw[->,>=latex] (1) ..  controls  ($(1)+(-6cm,1cm)$) and ($(ab)+(-4cm,-4cm)$) ..  (ab);%
\draw[->,>=latex] (1) ..  controls  ($(1)+(6cm,1cm)$) and ($(ba)+(4cm,-4cm)$) ..  (ba);%
\draw[->,>=latex] (1) ..	controls  ($(1)+(0cm,1cm)$) and ($(b)+(-1.5cm,-1cm)$) ..  ($(b)+(-1.5cm,0cm)$)
				..	controls  ($(b)+(-1.5cm,1cm)$) and ($(ab)+(1.5cm,-1cm)$) ..  ($(ab)+(1.5cm,0cm)$)
				..	controls  ($(ab)+(1.5cm,1cm)$) and ($(d)+(0cm,-1cm)$) ..  (d);%
\path[->,>=latex] (a)	edge[out=240,in=210,looseness=8]	node[scale=.8,align=center,left=.05,pos=.7]{$\BRIConeS\sepp\BRIConeS$}
											node[scale=.8,align=center,below=.05,pos=.5]{$\BRICaS\sepp\BRICaS$}	 (a);
\path[->,>=latex] (a)	edge		node[scale=.8,align=center,left=.05,pos=.57]{$\BRICbS\sepp\BRIConeS$}
						node[scale=.8,align=center,left=.05,pos=.43]{$\BRICabS\sepp\BRICaS$}
						node[scale=.8,align=center,left=1.65,pos=-0.18]{$\BRICabS\sepp\BRIConeS$}
						node[scale=.8,align=center,left=1.65,pos=1.25]{$\BRICdS\sepp\BRICaS$\\$\BRICbaS\sepp\BRIConeS$} (ab);	
\draw[->,>=latex] (a) ..  controls  ($(a)+(-4cm,4cm)$) and ($(d)+(-6cm,-1cm)$) ..  (d);%
\path[->,>=latex] (b)	edge[out=300,in=330,looseness=8]	node[scale=.8,align=center,right=.05,pos=.7]{$\BRIConeS\sepp\BRIConeS$}
											node[scale=.8,align=center,below=.05,pos=.5]{$\BRICbS\sepp\BRICbS$}	 (b);
\path[->,>=latex] (b)	edge		node[scale=.8,align=center,right=.05,pos=.57]{$\BRICaS\sepp\BRIConeS$}
						node[scale=.8,align=center,right=.05,pos=.43]{$\BRICbaS\sepp\BRICbS$}
						node[scale=.8,align=center,right=1.65,pos=-0.18]{$\BRICbaS\sepp\BRIConeS$}
						node[scale=.8,align=center,right=1.65,pos=1.25]{$\BRICdS\sepp\BRICbS$\\$\BRICabS\sepp\BRIConeS$} (ba);
\draw[->,>=latex] (b) ..  controls  ($(b)+(4cm,4cm)$) and ($(d)+(+6cm,-1cm)$) ..  (d);%
\path[->,>=latex] (ab)	edge[out=130,in=150,looseness=8]	(ab);
\path[->,>=latex] (ab)	edge	[out=-50,in=160]			node[scale=.8,align=center,left=.17,pos=.37]{$\BRICbS\sepp\BRICabS$}	 (b);
\path[->,>=latex] (ab)	edge[bend left=20]		node[scale=.8,align=center,below]{$\BRICbaS\sepp\BRICabS$}	 (ba);
\path[->,>=latex] (ab)	edge					node[scale=.8,align=center,left=.1,pos=.52]{$\BRICabS\sepp\BRICaS$}
									node[scale=.8,align=center,left=.08,pos=.30]{$\BRICaS\sepp\BRIConeS$}
									node[scale=.8,align=center,left=.1,pos=.74]{$\BRICdS\sepp\BRICabS$}
									node[scale=.8,align=center,left=1,pos=.22]{$\BRIConeS\sepp\BRIConeS$}	(d);
\path[->,>=latex] (ba)	edge[out=50,in=30,looseness=8]	(ba);
\path[->,>=latex] (ba)	edge	[out=-130,in=20]			node[scale=.8,align=center,right=.17,pos=.37]{$\BRICaS\sepp\BRICbaS$}	 (a);
\path[->,>=latex] (ba)	edge[bend left=20]		node[scale=.8,align=center,above]{$\BRICabS\sepp\BRICbaS$}	 (ab);
\path[->,>=latex] (ba)	edge					node[scale=.8,align=center,right=.1,pos=.52]{$\BRICbaS\sepp\BRICbS$}
									node[scale=.8,align=center,right=.08,pos=.30]{$\BRICbS\sepp\BRIConeS$}
									node[scale=.8,align=center,right=.1,pos=.74]{$\BRICdS\sepp\BRICbaS$}
									node[scale=.8,align=center,right=1,pos=0.22]{$\BRIConeS\sepp\BRIConeS$}  (d);
\path[->,>=latex] (d)	edge[loop above]	node[scale=.8,align=center,above=.08]{$\BRICaS\sepp\BRICbS$\\$\BRICbS\sepp\BRICaS$}
								node[scale=.8,align=center,left=.2,pos=.12,opacity=0]{$\BRIConeS\sepp\BRIConeS$\\$\BRICdS\sepp\BRICdS$}
								node[scale=.8,align=center,right=.2,pos=.88,opacity=0]{$\BRICabS\sepp\BRICbaS$\\$\BRICbaS\sepp\BRICabS$}	 (d);
\path[->,>=latex,opacity=0] (1)	edge[loop below]	node[scale=.8,align=center,below=.08]{$\BRone\sepon\BRone$\\$\BRd\sepon\BRd$} (1);
\end{scope}
\end{tikzpicture}
}
}

%%%%%%%%%%%%%%%%%%%%%%%%%%%%%%%%%%%%%%%%%%%%%%%%%%%%%%%%%%%%%
%%%%%%%%%%%%%%%%%%%%%%%%%%%%%%%%%%%%%%%%%%%%%%%%%%%%%%%%%%%%%
%%%%%%%%%%%%%%%%%%%%%%%%%%%%%%%%%%%%%%%%%%%%%%%%%%%%%%%%%%%%%
%B3 AUTOMATON%%%%%%%%%%%%%%%%%%%%%%%%%%%%%%%%%%%%%%%%%%%%%%%%%%%
%
\renewcommand\BRd{
\scalebox{.1}{\begin{tikzpicture}[rotate=-90,scale=1.3]
\braid[-,line width=6pt, style strands={1}{orange}, style strands={2}{violet}, style strands={3}{gray},number of strands=3,height=8mm] s_1^{-1} s_2^{-1} s_1^{-1};
\end{tikzpicture}
}}
\renewcommand\BRone{
\scalebox{.1}{\begin{tikzpicture}[rotate=-90,scale=1.3]
\braid[-,line width=6pt, style strands={1}{orange}, style strands={2}{violet}, style strands={3}{gray},number of strands=3,border height=14mm] ;
\end{tikzpicture}
}}
\renewcommand\BRa{
\scalebox{.1}{\begin{tikzpicture}[rotate=-90,scale=1.3]
\braid[-,line width=6pt, style strands={1}{orange}, style strands={2}{violet}, style strands={3}{gray},number of strands=3,height=24mm] at (0,0) s_1^{-1};
\end{tikzpicture}
}}
\renewcommand\BRb{
\scalebox{.1}{\begin{tikzpicture}[rotate=-90,scale=1.3]
\braid[-,line width=6pt, style strands={1}{orange}, style strands={2}{violet}, style strands={3}{gray},number of strands=3,height=24mm] at (0,0) s_2^{-1};
\end{tikzpicture}
}}
\renewcommand\BRab{
\scalebox{.1}{\begin{tikzpicture}[rotate=-90,scale=1.3]
\braid[-,line width=6pt, style strands={1}{orange}, style strands={2}{violet}, style strands={3}{gray},number of strands=3,height=12mm] at (0,0) s_1^{-1}s_2^{-1};
\end{tikzpicture}
}}
\renewcommand\BRba{
\scalebox{.1}{\begin{tikzpicture}[rotate=-90,scale=1.3]
\braid[-,line width=6pt, style strands={1}{orange}, style strands={2}{violet}, style strands={3}{gray},number of strands=3,height=12mm] at (0,0) s_2^{-1}s_1^{-1};
\end{tikzpicture}
}}
\newcommand\BRdS{
\scalebox{.1}{\begin{tikzpicture}[scale=1.6]
\braid[-,line width=6pt, style strands={1}{orange}, style strands={2}{violet}, style strands={3}{gray},number of strands=3,height=8mm] s_1^{-1} s_2^{-1} s_1^{-1};
\end{tikzpicture}
}}
\newcommand\BRoneS{
\scalebox{.1}{\begin{tikzpicture}[scale=1.6]
\braid[-,line width=6pt, style strands={1}{orange}, style strands={2}{violet}, style strands={3}{gray},number of strands=3,border height=14mm] ;
\end{tikzpicture}
}}
\newcommand\BRaS{
\scalebox{.1}{\begin{tikzpicture}[scale=1.6]
\braid[-,line width=6pt, style strands={1}{orange}, style strands={2}{violet}, style strands={3}{gray},number of strands=3,height=24mm] s_1^{-1};
\end{tikzpicture}
}}
\newcommand\BRbS{
\scalebox{.1}{\begin{tikzpicture}[scale=1.6]
\braid[-,line width=6pt, style strands={1}{orange}, style strands={2}{violet}, style strands={3}{gray},number of strands=3,height=24mm] s_2^{-1};
\end{tikzpicture}
}}
\newcommand\BRabS{
\scalebox{.1}{\begin{tikzpicture}[scale=1.6]
\braid[-,line width=6pt, style strands={1}{orange}, style strands={2}{violet}, style strands={3}{gray},number of strands=3,height=12mm] s_1^{-1}s_2^{-1};
\end{tikzpicture}
}}
\newcommand\BRbaS{
\scalebox{.1}{\begin{tikzpicture}[scale=1.6]
\braid[-,line width=6pt, style strands={1}{orange}, style strands={2}{violet}, style strands={3}{gray},number of strands=3,height=12mm] s_2^{-1}s_1^{-1};
\end{tikzpicture}
}}

\renewcommand\BRd{
\scalebox{.1}{\begin{tikzpicture}[rotate=-90,scale=1.3]
\braid[-,line width=6pt, style strands={1}{orange}, style strands={2}{violet}, style strands={3}{gray},number of strands=3,height=10mm] s_1^{-1} s_2^{-1} s_1^{-1};
\end{tikzpicture}
}}
\renewcommand\BRone{
\scalebox{.1}{\begin{tikzpicture}[rotate=-90,scale=1.3]
\braid[-,line width=6pt, style strands={1}{orange}, style strands={2}{violet}, style strands={3}{gray},number of strands=3,border height=17.5mm] ;
\end{tikzpicture}
}}
\renewcommand\BRa{
\scalebox{.1}{\begin{tikzpicture}[rotate=-90,scale=1.3]
\braid[-,line width=6pt, style strands={1}{orange}, style strands={2}{violet}, style strands={3}{gray},number of strands=3,height=30mm] at (0,0) s_1^{-1};
\end{tikzpicture}
}}
\renewcommand\BRb{
\scalebox{.1}{\begin{tikzpicture}[rotate=-90,scale=1.3]
\braid[-,line width=6pt, style strands={1}{orange}, style strands={2}{violet}, style strands={3}{gray},number of strands=3,height=30mm] at (0,0) s_2^{-1};
\end{tikzpicture}
}}
\renewcommand\BRab{
\scalebox{.1}{\begin{tikzpicture}[rotate=-90,scale=1.3]
\braid[-,line width=6pt, style strands={1}{orange}, style strands={2}{violet}, style strands={3}{gray},number of strands=3,height=15mm] at (0,0) s_1^{-1}s_2^{-1};
\end{tikzpicture}
}}
\renewcommand\BRba{
\scalebox{.1}{\begin{tikzpicture}[rotate=-90,scale=1.3]
\braid[-,line width=6pt, style strands={1}{orange}, style strands={2}{violet}, style strands={3}{gray},number of strands=3,height=15mm] at (0,0) s_2^{-1}s_1^{-1};
\end{tikzpicture}
}}
\renewcommand\BRdS{
\scalebox{.1}{\begin{tikzpicture}[scale=1.6]
\braid[-,line width=6pt, style strands={1}{orange}, style strands={2}{violet}, style strands={3}{gray},number of strands=3,height=10mm] s_1^{-1} s_2^{-1} s_1^{-1};
\end{tikzpicture}
}}
\renewcommand\BRoneS{
\scalebox{.1}{\begin{tikzpicture}[scale=1.6]
\braid[-,line width=6pt, style strands={1}{orange}, style strands={2}{violet}, style strands={3}{gray},number of strands=3,border height=17.5mm] ;
\end{tikzpicture}
}}
\renewcommand\BRaS{
\scalebox{.1}{\begin{tikzpicture}[scale=1.6]
\braid[-,line width=6pt, style strands={1}{orange}, style strands={2}{violet}, style strands={3}{gray},number of strands=3,height=30mm] s_1^{-1};
\end{tikzpicture}
}}
\renewcommand\BRbS{
\scalebox{.1}{\begin{tikzpicture}[scale=1.6]
\braid[-,line width=6pt, style strands={1}{orange}, style strands={2}{violet}, style strands={3}{gray},number of strands=3,height=30mm] s_2^{-1};
\end{tikzpicture}
}}
\renewcommand\BRabS{
\scalebox{.1}{\begin{tikzpicture}[scale=1.6]
\braid[-,line width=6pt, style strands={1}{orange}, style strands={2}{violet}, style strands={3}{gray},number of strands=3,height=15mm] s_1^{-1}s_2^{-1};
\end{tikzpicture}
}}
\renewcommand\BRbaS{
\scalebox{.1}{\begin{tikzpicture}[scale=1.6]
\braid[-,line width=6pt, style strands={1}{orange}, style strands={2}{violet}, style strands={3}{gray},number of strands=3,height=15mm] s_2^{-1}s_1^{-1};
\end{tikzpicture}
}}

\newcommand\BRabSpec{
\scalebox{.1}{\begin{tikzpicture}[scale=1.6]
\braid[-,line width=6pt, style strands={1}{violet}, style strands={2}{gray}, style strands={3}{orange},number of strands=3,height=15mm] s_1^{-1}s_2^{-1};
\end{tikzpicture}
}}
\newcommand\BRdSpec{
\scalebox{.1}{\begin{tikzpicture}[scale=1.6]
\braid[-,line width=6pt, style strands={1}{violet}, style strands={2}{orange}, style strands={3}{gray},number of strands=3,height=15mm] s_1^{-1}s_2^{-1};
\end{tikzpicture}
}}

\newcommand\BThreeAutomaton{
\scalebox{.51}{
\begin{tikzpicture}[node distance=35mm,inner sep=1.5pt]
	 	\node[draw=orange,thin,circle,text width=4mm,align=center] (1) {$\BRoneS$};
	 	\node[draw=orange,thin,circle,text width=4mm,align=center] (a) [above left of=1] {$\BRaS$};
	 	\node[draw=orange,thin,circle,text width=4mm,align=center] (b) [above right of=1] {$\BRbS$};
	 	\node[draw=orange,thin,circle,text width=4mm,align=center] (ab) [above of=a] {$\BRabS$};
	 	\node[draw=orange,thin,circle,text width=4mm,align=center] (ba) [above of=b] {$\BRbaS$};
	 	\node[draw=orange,thin,circle,text width=4mm,align=center] (d) [above left of=ba] {$\BRdS$};
\begin{scope}[draw=couleurTransition,thick,couleurTransition]
\path[->,>=latex] (1)	edge[loop below]	node[scale=.8,align=center,below=.08]{$\BRone\sepon\BRone$\\$\BRd\sepon\BRd$}
								node[scale=.8,align=center,left=.5,pos=.12]{$\BRa\sepon\BRa$\\$\BRb\sepon\BRb$}
								node[scale=.8,align=center,right=.5,pos=.88]{$\BRab\sepon\BRab$\\$\BRba\sepon\BRba$}	(1);	
\path[<-,>=latex] (1)	edge			node[scale=.8,align=center,left=.08,pos=.20]{$\BRab\sepon\BRd$}
							node[scale=.8,align=center,left=.1,pos=.42]{$\BRb\sepon\BRba$}
							node[scale=.8,align=center,left=.1,pos=.64]{$\BRone\sepon\BRa$}
							node[scale=.8,align=center,left=1.2,pos=.7]{$\BRa\sepon\BRa$}
							node[scale=.8,right=.72,pos=.5]{$\BRone\sepon\BRd$}			 (a);
\path[<-,>=latex] (1)	edge			node[scale=.8,align=center,right=.08,pos=.20]{$\BRba\sepon\BRd$}
							node[scale=.8,align=center,right=.1,pos=.42]{$\BRa\sepon\BRab$}
							node[scale=.8,align=center,right=.1,pos=.64]{$\BRone\sepon\BRb$}
							node[scale=.8,align=center,right=1.2,pos=.7]{$\BRb\sepon\BRb$}	 (b);
\draw[<-,>=latex] (1) ..  controls  ($(1)+(-6cm,1cm)$) and ($(ab)+(-4cm,-4cm)$) ..  (ab);%
\draw[<-,>=latex] (1) ..  controls  ($(1)+(6cm,1cm)$) and ($(ba)+(4cm,-4cm)$) ..  (ba);%
\draw[<-,>=latex] (1) ..	controls  ($(1)+(0cm,1cm)$) and ($(b)+(-1.2cm,-1.2cm)$) ..  ($(b)+(-1.2cm,0cm)$)
				..	controls  ($(b)+(-1.2cm,1.2cm)$) and ($(ab)+(1.5cm,-1cm)$) ..  ($(ab)+(1.5cm,0cm)$)
				..	controls  ($(ab)+(1.5cm,1cm)$) and ($(d)+(0cm,-1cm)$) ..  (d);%
\path[->,>=latex] (a)	edge[out=240,in=210,looseness=8]	(a);
\path[->,>=latex] (a)	edge[bend left=20]		node[scale=.8,align=center,below]{$\BRd\sepon\BRd$}	 (b);
\path[<-,>=latex] (a)	edge		node[scale=.8,align=center,left=.05,pos=.57]{$\BRa\sepon\BRab$}
						node[scale=.8,align=center,left=.05,pos=.43]{$\BRab\sepon\BRd$}
						node[scale=.8,align=center,left=1.61,pos=-0.25]{$\BRone\sepon\BRab$\\$\BRb\sepon\BRd$}
						node[scale=.8,align=center,left=1.65,pos=1.25]{$\BRa\sepon\BRd$} (ab);
\draw[<-,>=latex] (a) ..  controls  ($(a)+(-4cm,4cm)$) and ($(d)+(-6cm,-1cm)$) ..  (d);%
\path[->,>=latex] (b)	edge[bend left=20]		node[scale=.8,align=center,above]{$\BRd\sepon\BRd$}	 (a);
\path[->,>=latex] (b)	edge[out=300,in=330,looseness=8]	(b);
\path[<-,>=latex] (b)	edge		node[scale=.8,align=center,right=.05,pos=.57]{$\BRb\sepon\BRba$}
						node[scale=.8,align=center,right=.05,pos=.43]{$\BRba\sepon\BRd$}
						node[scale=.8,align=center,right=1.61,pos=-0.25]{$\BRb\sepon\BRba$\\$\BRba\sepon\BRd$}
						node[scale=.8,align=center,right=1.65,pos=1.25]{$\BRb\sepon\BRd$} (ba);
\draw[<-,>=latex] (b) ..  controls  ($(b)+(4cm,4cm)$) and ($(d)+(+6cm,-1cm)$) ..  (d);%
\path[<-,>=latex] (ab)	edge	[out=-50,in=130]	node[scale=.8,align=center,left=.3,pos=.37]{$\BRab\sepon\BRb$}	 (b);
\path[->,>=latex] (ab)	edge[bend left=20]		node[scale=.8,align=center]{$\BRba\sepon\BRab$\\$\BRd\sepon\BRd$}	 (ba);
\path[<-,>=latex] (ab)	edge					node[scale=.8,align=center,left=.2,pos=.6]{$\BRab\sepon\BRd$}(d); 	
\path[<-,>=latex] (ba)	edge	[out=-130,in=50]	node[scale=.8,align=center,right=.3,pos=.37]{$\BRba\sepon\BRa$}	 (a);
\path[->,>=latex] (ba)	edge[bend left=20]		node[scale=.8,align=center]{$\BRab\sepon\BRba$\\$\BRd\sepon\BRd$}	 (ab);
\path[<-,>=latex] (ba)	edge					node[scale=.8,align=center,right=.2,pos=.6]{$\BRba\sepon\BRd$}		(d);
\path[->,>=latex] (d)	edge[loop above]	node[scale=.8,align=center,above=.08]{$\BRd\sepon\BRd$} (d);
\path[->,>=latex,opacity=0] (d)	edge[loop above]	node[scale=.8,align=center,above=.08]{$\BRICaS\sepp\BRICbS$\\$\BRICbS\sepp\BRICaS$} (d);
\end{scope}
\end{tikzpicture}
}
}

%-----------------------------------------------------------------------------------------
%-----------------------------------------------------------------------------------------
%-----------------------------------------------------------------------------------------
%-----------------------------------------------------------------------------------------
\section{Examples and counterexamples}\label{sec-ex}

Our very first example is straightforward, but enlightening.

\begin{example}\label{ex-finite} Every finite monoid~$\Fin$ (in particular every finite group) is an \emph{automaticon} monoid, that is, both an automatic and an automaton monoid.
Consider its quadratic normalisation~$(\Fin,\NM)$ with~$\NM(ab)=1(ab)$ for every~$(a,b)\in\Fin^2$.
Figure~\ref{fig-finite} shows how to compute its breadth~$(3,2)$, witness of Condition~\home\ for applying Theorem~\ref{thm-main}.
\end{example}

\begin{figure}[h]
\vspace*{-20pt}
\begin{minipage}[b]{.46\linewidth}
  \centering \rotatebox{-15}{
 \hspace*{-20pt}
 \begin{tikzpicture}[thick,node distance=15mm,inner sep=1.3pt]
	\node[inner sep=4pt] (00) at (0,0) {};
	\node (10) [above of=00] {};
	\node (11) [left of=10] 	{};
	\node (12) [right of=10] 	{};
	\node (13) [left of=11] 	{};
	\node (20) [above of=10] {};
	\node (21) [above of=11] {};
	\node (22) [above of=12] {};
	\node (30) [above of=20] {};
	\fill[color=lightgray] (0,0) -- (-3,1.5) -- (-3,3) -- (0,4.5) -- (0,0);
	\fill[color=lightgray] (0,0) -- (3,3) -- (0,4.5) -- (0,0);
	\draw[color=white,line width=9pt] (0,0) -- (-3,1.5) -- (-3,3) -- (0,4.5) -- (0,0);
	\draw[color=white,line width=9pt] (0,0) -- (3,3) -- (0,4.5) -- (0,0);
	\draw[color=white,line width=9pt] (0,1.5) -- (1.5,3);
	\draw[color=white,line width=9pt] (1.5,1.5) -- (1.5,3) -- (0,4.5);
	\draw[color=white,line width=9pt] (-3,1.5) -- (-1.5,3) -- (0,4.5);
	\draw[color=white,line width=9pt] (-1.5,1.5) -- (0,3);
	\draw[color=white,line width=9pt] (0,0) -- (-1.5,1.5) -- (-1.5,3);
	\path[->,>=latex,line width=1.4pt]	(30)	edge 		node[pos=.42,fill=white,rotate=15]{\footnotesize$\!a\!$} (20);
	\path[->,>=latex,line width=1.4pt]	(20)	edge			node[pos=.42,fill=white,rotate=15]{\footnotesize$\!b\!$} (10);
	\path[->,>=latex,line width=1.4pt]	(10)	edge			node[pos=.42,fill=white,rotate=15]{\footnotesize$\!c\!$} (00);
	\path[->,>=latex] 	(30)	edge			node[pos=.42,fill=white,rotate=15]{\footnotesize$\!1\!$}(21);
	\path[->,>=latex]	(21)	edge			node[pos=.42,fill=white,rotate=15]{\footnotesize$\!a\!$}(11);
	\path[->,>=latex]	(11)	edge			node[pos=.42,fill=white,rotate=15]{\footnotesize$\!bc\!$}(00);
	\path[->,>=latex]	(20)	edge			node[pos=.42,fill=white,rotate=15]{\footnotesize$\!1\!$}(11);
	\path[->,>=latex]	(30)	edge			node[pos=.42,fill=white,rotate=15]{\footnotesize$\!1\!$}(22);
	\path[->,>=latex]	(22)	edge			node[pos=.42,fill=white,rotate=15]{\footnotesize$\!ab\!$}(10);
	\path[->,>=latex]	(12)	edge			node[pos=.42,fill=white,rotate=15,below=-.06]{\footnotesize$abc$}(00);
	\path[->,>=latex]	(22)	edge			node[pos=.42,fill=white,rotate=15]{\footnotesize$\!1\!$}(12);
	\path[->,>=latex]	(21)	edge			node[pos=.42,fill=white,rotate=15]{\footnotesize$\!1\!$}(13);
	\path[->,>=latex]	(13)	edge			node[pos=.42,fill=white,rotate=15,below=-.03]{\footnotesize$abc$}(00);
	\path[->,>=latex]
	(10)	edge[densely dotted,white] 		node[fill=lightgray]{$\!\NM\!$}	(11b)
	(10)	edge[densely dotted,white] 		node[fill=lightgray]{$\!\NM\!$}	(12b)
	(11)	edge[densely dotted,white] 		node[fill=lightgray]{$\!\NM\!$}	(13b)
	(20)	edge[densely dotted,white] 		node[fill=lightgray]{$\!\NM\!$}	(21b)
	(20)	edge[densely dotted,white] 		node[fill=lightgray]{$\!\NM\!$}	(22b);
	%(21)	edge[densely dotted,white] 		node[fill=lightgray]{$\!\NM\!$}	(23b)
	%(22)	edge[densely dotted,white] 		node[fill=lightgray]{$\!\NM\!$}	(24b)
\end{tikzpicture}
 \hspace*{-20pt}
}
\vspace*{-30pt}
\caption{Computing the breadth~$(3,2)$ for any finite monoid~$\Fin$ as in~Example~\ref{ex-finite}.\label{fig-finite}}
\end{minipage} \hfill
 \begin{minipage}[b]{.46\linewidth}
  \centering\rotatebox{-15}{
 \hspace*{-60pt}
 \begin{tikzpicture}[thick,node distance=15mm,inner sep=1.3pt]
	\node[inner sep=4pt] (00) at (0,0) {};
	\node (10) [above of=00] {};
	\node (11) [left of=10] 	{};
	\node (12) [right of=10] 	{};
	\node (13) [left of=11] 	{};
	\node (14) [right of=12] 	{};
	\node (14b) [node distance=13mm,right of=12] {};
	\node (15) [left of=13] 	{};
	\node (20) [above of=10] {};
	\node (21) [above of=11] {};
	\node (22) [above of=12] {};
	\node (23) [above of=13] {};
	\node (24) [above of=14] {};
	\node (25) [above of=15] {};
	\node (30) [above of=20] {};
	\fill[color=lightgray] (0,0) -- (-3,1.5) -- (-3,3) -- (0,4.5) -- (0,0);
	\fill[color=lightgray] (0,0) -- (3,3) -- (0,4.5) -- (0,0);
	\draw[color=white,line width=9pt] (0,0) -- (-3,1.5) -- (-3,3) -- (0,4.5) -- (0,0);
	\draw[color=white,line width=9pt] (0,0) -- (3,3) -- (0,4.5) -- (0,0);
	\draw[color=white,line width=9pt] (0,1.5) -- (1.5,3);
	\draw[color=white,line width=9pt] (1.5,1.5) -- (1.5,3) -- (0,4.5);
	\draw[color=white,line width=9pt] (-3,1.5) -- (-1.5,3) -- (0,4.5);
	\draw[color=white,line width=9pt] (-1.5,1.5) -- (0,3);
	\draw[color=white,line width=9pt] (0,0) -- (-1.5,1.5) -- (-1.5,3);
	\path[->,>=latex,line width=1.4pt]	(30)	edge 		node[pos=.42,fill=white,rotate=15]{\footnotesize$\!x\!$} (20);
	\path[->,>=latex,line width=1.4pt]	(20)	edge			node[pos=.42,fill=white,rotate=15]{\footnotesize$\!\tta\!$} (10);
	\path[->,>=latex,line width=1.4pt]	(10)	edge			node[pos=.42,fill=white,rotate=15]{\footnotesize$\!\ttb\!$} (00);
	\path[->,>=latex] 	(30)	edge 		node[pos=.42,fill=white,rotate=15]{\footnotesize$\!\ttunit\!$} (21);
	\path[->,>=latex]	(21)	edge 		node[pos=.42,fill=white,rotate=15]{\footnotesize$x$} (11);
	\path[->,>=latex]	(11)	edge 		node[pos=.42,fill=white,rotate=15]{\footnotesize$\!\ttunit\!$} (00);
	\path[->,>=latex]	(20)	edge 		node[pos=.42,fill=white,rotate=15]{\footnotesize$\!\ttunit\!$} (11);
	\path[->,>=latex]	(30)	edge 		node[pos=.42,fill=white,rotate=15]{\footnotesize$x$} (22);
	\path[->,>=latex]	(22)	edge 		node[pos=.42,fill=white,rotate=15]{\footnotesize$\tta$} (10);
	\path[->,>=latex]	(12)	edge 		node[pos=.42,fill=white,rotate=15]{\footnotesize$\!\ttunit\!$} (00);
	\path[->,>=latex]	(22)	edge 		node[pos=.42,fill=white,rotate=15]{\footnotesize$\!\ttunit\!$} (12);
	\path[->,>=latex]	(21)	edge 		node[pos=.42,fill=white,rotate=15]{\footnotesize$\!\ttunit\!$} (13);
	\path[->,>=latex]	(13)	edge 		node[pos=.42,fill=white,rotate=15]{\footnotesize$x$} (00);
	\path[->,>=latex]	(30)	edge 		node[pos=.42,fill=white,rotate=15]{\footnotesize$\!\ttunit\!$} (24);
	\path[->,>=latex]	(24)	edge 		node[pos=.42,fill=white,rotate=15]{\footnotesize$x$} (12);
	\path[->,>=latex]	(24)	edge 		node[pos=.42,fill=white,rotate=15]{\footnotesize$\!\ttunit\!$} (14);
	\path[->,>=latex]	(14)	edge 		node[pos=.42,fill=white,rotate=15]{\footnotesize$x$} (00);
	\path[->,>=latex]
	(10)	edge[densely dotted,white] 		node[fill=lightgray]{$\!\NM\!$}	(11b)
	(10)	edge[densely dotted,white] 		node[fill=lightgray]{$\!\NM\!$}	(12b)
	(11)	edge[densely dotted,white] 		node[fill=lightgray]{$\!\NM\!$}	(13b)
	(12)	edge[thin,densely dotted,gray] 		node[fill=white]{$\!\NM\!$}	(14b)
	(20)	edge[densely dotted,white] 		node[fill=lightgray]{$\!\NM\!$}	(21b)
	(20)	edge[densely dotted,white] 		node[fill=lightgray]{$\!\NM\!$}	(22b)
	%(21)	edge[densely dotted,white] 		node[fill=lightgray]{$\!\NM\!$}	(23b)
	(22)	edge[densely dotted,white] 		node[fill=lightgray]{$\!\NM\!$}	(24b)
	(22)	edge[densely dotted,white] 		node[fill=lightgray]{$\!\NM\!$}	(24b);
\end{tikzpicture}
 \hspace*{-20pt}
}
\vspace*{-28pt}
\caption{Computing the breadth~$(3,4)$
for the bicylic monoid~$\Bic$ from~Example~\ref{ex-bicyclic}.
\label{fig-bicyclic}}
 \end{minipage}
\end{figure}

As mentioned in Section~\ref{sec-intro} and appearing on~Figure~\ref{fig-picture}, there exist automatic semigroups that cannot be automaton semigroups.

\begin{example}\label{ex-bicyclic} The bicyclic monoid~$\Bic=\presm{~\tta,\ttb:\tta\ttb=\ttunit~}$ is known to be automatic and not residually finite,
hence cannot be an automaton monoid.
Choose for~$\Bic$ the quadratic normalisation~$(\{\tta,\ttb,\ttunit\},\NM)$ with~$\NM(\tta\ttb)=\ttunit\ttunit$, $\NM(x\ttunit)=\ttunit x$ for~$x\in\{\tta,\ttb\}$,
and~$\NM(xy)=xy$ otherwise. Figure~\ref{fig-bicyclic} illustrates the computation (on the witness word~$x\tta\ttb$ with~$x\in\{\tta,\ttb\}$) of its breadth~$(3,4)$.
The Condition~\home\ is hence not satisfied and Theorem~\ref{thm-main} cannot apply. Precisely, according to the proof of Proposition~\ref{prop-home} and Figure~\ref{fig-proof},
we have $\sigma_{\tta\ttb}(x)=\ttunit\not=x=\sigma_{\ttunit\ttunit}(x)$ for~$x\in\{\tta,\ttb\}$, hence $\sigma_{\tta\ttb}\not=\sigma_{\ttunit}=\sigma_{\ttunit\ttunit}$.
\end{example}

By contrast, one of the simplest nontrivial examples could be the following.

\begin{example}\label{ex-bsOneZero} The automatic monoid~$\presm{~\tta,\ttb:\tta\ttb=\tta~}$
admits the quadratic normalisation~$(\{\tta,\ttb,\ttunit\},\NM)$ with~$\NM(\tta\ttb)=\ttunit\tta$, $\NM(x\ttunit)=\ttunit x$ for~$x\in\{\tta,\ttb\}$,
and~$\NM(xy)=xy$ otherwise. Condition~\cunit\ is satisfied, and the breadth is~(3,3) according to the graph on Figure~\ref{fig-bsOneZero}.
By Theorem~\ref{thm-main}, it is therefore an automaton monoid, generated by the Mealy automaton displayed on Figure~\ref{fig-bsOneZero}.

The latter happens to be the common smallest nontrivial member of the family of Baumslag--Solitar monoids (see \cite{HoffmannPhD} for instance), namely~$\BS^{\ttunit}_{\hspace*{-1pt}+}\!(1,0)$, and
of a wide family of right-cancellative semigroups, that we readily call~\emph{Artin--Krammer}
monoids and that have been introduced and studied in~\cite{kram} (see also~\cite{Hes, HeOz, Ozo}), namely~$\AK^{\ttunit}_{\hspace*{-1pt}+}\!(\Gamma)$ associated with
the Coxeter-like matrix~$\Gamma=\ $\scalebox{.7}{$\begin{bmatrix}1&1\\2&1\end{bmatrix}$}.

\begin{figure}[h]
\centering
\vspace*{-10pt}
	\scalebox{.95}{
	\begin{tikzpicture}
	\begin{scope}[>=latex,xscale=.4,yscale=.35]
  \begin{scope}[inner sep=3pt,rounded corners,minimum height=3.5mm]
  \node (uau) at (404.0bp,162.0bp) [draw] {$\ttunit\tta\ttunit$};
  \node (uaa) at (54.0bp,90.0bp) [draw] {$\ttunit\tta\tta$};
  \node (uab) at (340.0bp,90.0bp) [draw] {$\ttunit\tta\ttb$};
  \node (aua) at (85.0bp,162.0bp) [draw] {$\tta\ttunit\tta$};
  \node (aba) at (22.0bp,162.0bp) [draw] {$\tta\ttb\tta$};
  \node (abb) at (340.0bp,162.0bp) [draw] {$\tta\ttb\ttb$};
  \node (aub) at (276.0bp,162.0bp) [draw] {$\tta\ttunit\ttb$};
  \node (auu) at (404.0bp,234.0bp) [draw] {$\tta\ttunit\ttunit$};
  \node (abu) at (308.0bp,234.0bp) [draw] {$\tta\ttb\ttunit$};
  \node (bbu) at (468.0bp,234.0bp) [draw] {$\ttb\ttb\ttunit$};
  \node (ubu) at (533.0bp,162.0bp) [draw] {$\ttunit\ttb\ttunit$};
  \node (buu) at (533.0bp,234.0bp) [draw] {$\ttb\ttunit\ttunit$};
  \node (uba) at (196.0bp,90.0bp) [draw] {$\ttunit\ttb\tta$};
  \node (ubb) at (468.0bp,90.0bp) [draw] {$\ttunit\ttb\ttb$};
  \node (bub) at (468.0bp,162.0bp) [draw] {$\ttb\ttunit\ttb$};
  \node (bua) at (196.0bp,162.0bp) [draw] {$\ttb\ttunit\tta$};
  \node (uua) at (369.0bp,18.0bp) [draw] {$\ttunit\ttunit\tta$};
  \node (uub) at (533.0bp,90.0bp) [draw] {$\ttunit\ttunit\ttb$};
  \node (bau) at (244.0bp,234.0bp) [draw] {$\ttb\tta\ttunit$};
  \node (aab) at (54.0bp,234.0bp) [draw] {$\tta\tta\ttb$};
  \node (aau) at (117.0bp,234.0bp) [draw] {$\tta\tta\ttunit$};
  \node (bab) at (180.0bp,234.0bp) [draw] {$\ttb\tta\ttb$};
  \draw [double,->,thick] (bbu) ..controls (468.0bp,207.98bp) and (468.0bp,198.71bp)  .. (bub);
  \draw [->,thick] (aba) ..controls (33.259bp,136.37bp) and (38.03bp,125.93bp)  .. (uaa);
  \draw [->,thick] (auu) ..controls (404.0bp,207.98bp) and (404.0bp,198.71bp)  .. (uau);
  \draw [double,->,thick] (aab) ..controls (64.907bp,208.37bp) and (69.529bp,197.93bp)  .. (aua);
  \draw [->,thick] (bua) ..controls (196.0bp,135.98bp) and (196.0bp,126.71bp)  .. (uba);
  \draw [->,thick] (abb) ..controls (340.0bp,135.98bp) and (340.0bp,126.71bp)  .. (uab);
  \draw [double,->,thick] (ubu) ..controls (533.0bp,135.98bp) and (533.0bp,126.71bp)  .. (uub);
  \draw [->,thick] (aua) ..controls (74.093bp,136.37bp) and (69.471bp,125.93bp)  .. (uaa);
  \draw [double,->,thick] (bau) ..controls (227.36bp,208.73bp) and (219.31bp,197.0bp)  .. (bua);
  \draw [double,->,thick] (aau) ..controls (105.74bp,208.37bp) and (100.97bp,197.93bp)  .. (aua);
  \draw [->,thick] (buu) ..controls (533.0bp,207.98bp) and (533.0bp,198.71bp)  .. (ubu);
  \draw [->,thick] (bub) ..controls (468.0bp,135.98bp) and (468.0bp,126.71bp)  .. (ubb);
  \draw [->,thick] (abu) ..controls (339.41bp,210.1bp) and (362.14bp,193.53bp)  .. (uau);
  \draw [double,->,thick] (uau) ..controls (393.85bp,119.81bp) and (382.7bp,74.568bp)  .. (uua);
  \draw [double,->,thick] (uab) ..controls (350.15bp,64.491bp) and (354.4bp,54.232bp)  .. (uua);
  \draw [->,thick] (aub) ..controls (297.89bp,137.06bp) and (309.94bp,123.88bp)  .. (uab);
  \draw [double,->,thick] (bab) ..controls (185.68bp,208.14bp) and (187.88bp,198.54bp)  .. (bua);
  \draw [double,->,thick] (abu) ..controls (296.74bp,208.37bp) and (291.97bp,197.93bp)  .. (aub);
\end{scope}
\end{scope}
\begin{scope}[->,>=latex,node distance=20mm,xshift=10cm,yshift=8mm]
	 	\node[etatDualDual] (two) {\(\ttb\)};
	 	\node[etatDualDual] (one) [above right of=two] {\(\tta\)};
	 	\node[etatDualDual] (thr) [below right of=one] {\(\ttunit\)};
	\path (one) edge[loop above] 	node[text width=6mm,align=center,above]{\(\tta|\ttb\)} (one);
	\path (one) edge 			node[text width=6mm,align=center,above=.1,pos=.7]{\(\ttb|\tta\)} (two);
	\path (one) edge 			node[text width=9mm,align=center,above=.1,pos=.7]{\(\ttunit|\tta\)} (thr);
	\path (two) edge[loop left] 		node[text width=6mm,align=center,left]{\(\ttb|\ttb\)} (two);
	\path (two) edge	 		node[text width=7mm,align=center,below]{\(\ttunit|\ttb\)} (thr);
	\path (two) edge[opacity=1] 	node[text width=7mm,align=center,below=.4,opacity=1]{\(\tta|\tta\)} (thr);
	\path (thr)  edge[loop right] 	node[text width=6mm,align=center,right]{\(\ttunit|\ttunit\)\\ \(\tta|\tta\)\\ \(\ttb|\ttb\)} (thr);
\end{scope}
\end{tikzpicture}
	}
\vspace*{-10pt}
\caption{The $\NMbar$-graph for the quadratic normalisation associated with~$\presm{~\tta,\ttb:\tta\ttb=\tta~}$ from~Example~\ref{ex-bsOneZero}: simple arrows correspond to~$\NMbar_1$ and double arrows to~$\NMbar_2$, while loops are simply omitted for better readability. The breadth is $(3,3)$ as well. On the right is the associated Mealy automaton.
}%
\label{fig-bsOneZero}
\end{figure}
\end{example}

Examples~\ref{ex-akram} and~\ref{ex-bsThreeTwo} describe important members from both these families.

\begin{example}\label{ex-akram} The following Artin--Krammer monoid is emblematic:%
\[\AK^{\ttunit}_{\hspace*{-1pt}+}\!\Big(\scalebox{.54}{
$\begin{bmatrix}1&3&2\\
{\color{orange}4}&1&3\\
2&{\color{orange}4}&1\end{bmatrix}$}\Big)
=\Bigg\langle \tta,\ttb,\ttc:
\scalebox{1}{$\begin{matrix}{\color{orange}\tta}\ttb\tta\ttb=\tta\ttb\tta\\
\tta\ttc=\ttc\tta\\
{\color{orange}\ttb}\ttc\ttb\ttc=\ttb\ttc\ttb\end{matrix}$}\ \Bigg\rangle^{\ttunit}_{\hspace*{-1pt}+}.
\]As displayed on Figure~\ref{fig-akram}, its minimal so-called Garside family forms like a flint which encodes its whole combinatorics
and, according to Theorem~\ref{thm-main}, makes it an automaticon monoid.
\end{example}

\begin{figure}[t]
 \begin{minipage}[b]{.46\linewidth}
  \centering \scalebox{0.63}{
\begin{tikzpicture}[<-,>=latex,scale=.6,rotate=-90]
\node (0) at (94.0bp,486.0bp) [fill=orange,ellipse] {};
\node (a) at (49.0bp,342.0bp) [fill=orange,ellipse] {};
\node (b) at (150.0bp,390.0bp) [fill=orange,ellipse] {};
\node (c) at (94.0bp,438.0bp) [fill=orange,ellipse] {};
\draw [orange,very thick] (0) ..controls (89.469bp,473.3bp) and (83.479bp,457.53bp)  .. (79.0bp,444.0bp) .. controls (66.811bp,407.19bp) and (54.236bp,362.08bp)  .. (a);
\draw [violet,very thick] (0) ..controls (105.53bp,465.64bp) and (137.98bp,411.18bp)  .. (b);
\draw [black,opacity=.5,very thick,dotted] (0) ..controls (94.0bp,471.6bp) and (94.0bp,452.36bp)  .. (c);
\node (ab) at (107.0bp,246.0bp) [draw,ellipse] {};
\node (aba) at (121.0bp,198.0bp) [fill=orange,ellipse] {};
\node (bab) at (145.0bp,246.0bp) [draw,ellipse] {};
\node (ba) at (150.0bp,342.0bp) [fill=orange,ellipse] {};
\node (ac) at (35.0bp,294.0bp) [fill=orange,ellipse] {};
\node (bcb) at (237.0bp,294.0bp) [fill=orange,ellipse] {};
\node (bc) at (208.0bp,342.0bp) [draw,ellipse] {};
\node (cbc) at (179.0bp,342.0bp) [draw,ellipse] {};
\node (cb) at (121.0bp,390.0bp) [fill=orange,ellipse] {};
\draw [orange,very thick] (ab) ..controls (111.08bp,231.6bp) and (116.93bp,212.36bp)  .. (aba);
\draw [orange,very thick] (bab) ..controls (138.19bp,231.95bp) and (127.67bp,211.79bp)  .. (aba);
\draw [violet,very thick] (a) ..controls (61.033bp,321.5bp) and (95.169bp,266.17bp)  .. (ab);
\draw [violet,very thick] (ba) ..controls (148.92bp,320.62bp) and (146.11bp,267.82bp)  .. (bab);
\draw [orange,very thick] (b) ..controls (150.0bp,375.6bp) and (150.0bp,356.36bp)  .. (ba);
\draw [orange,very thick] (c) ..controls (90.055bp,417.31bp) and (79.861bp,371.04bp)  .. (63.0bp,336.0bp) .. controls (55.824bp,321.09bp) and (43.662bp,305.48bp)  .. (ac);
\draw [black,opacity=.5,very thick,dotted] (a) ..controls (44.923bp,327.6bp) and (39.067bp,308.36bp)  .. (ac);
\draw [violet,very thick] (bc) ..controls (215.95bp,328.39bp) and (229.06bp,307.59bp)  .. (bcb);
\draw [violet,very thick] (cbc) ..controls (193.29bp,329.67bp) and (222.51bp,306.49bp)  .. (bcb);
\draw [black,opacity=.5,very thick,dotted] (b) ..controls (164.29bp,377.67bp) and (193.51bp,354.49bp)  .. (bc);
\draw [black,opacity=.5,very thick,dotted] (cb) ..controls (135.29bp,377.67bp) and (164.51bp,354.49bp)  .. (cbc);
\draw [violet,very thick] (c) ..controls (101.66bp,423.95bp) and (113.5bp,403.79bp)  .. (cb);
\node (acba) at (34.0bp,198.0bp) [fill=orange,ellipse] {};
\node (acb) at (34.0bp,246.0bp) [draw,ellipse] {};
\node (cbab) at (63.0bp,246.0bp) [draw,ellipse] {};
\node (cba) at (121.0bp,342.0bp) [fill=orange,ellipse] {};
\node (bacba) at (268.0bp,150.0bp) [fill=orange,ellipse] {};
\node (bacb) at (208.0bp,246.0bp) [draw,ellipse] {};
\node (bcbab) at (275.0bp,198.0bp) [draw,ellipse] {};%bottom
\node (cbacb) at (179.0bp,246.0bp) [draw,ellipse] {};
\node (bac) at (208.0bp,294.0bp) [draw,ellipse] {};
\node (cbac) at (179.0bp,294.0bp) [draw,ellipse] {};
\node (bcba) at (275.0bp,246.0bp) [draw,ellipse] {};%right
\draw [orange,very thick] (acb) ..controls (34.0bp,231.6bp) and (34.0bp,212.36bp)  .. (acba);
\draw [orange,very thick] (cbab) ..controls (55.049bp,232.39bp) and (41.939bp,211.59bp)  .. (acba);
\draw [violet,very thick] (ac) ..controls (34.709bp,279.6bp) and (34.29bp,260.36bp)  .. (acb);
\draw [violet,very thick] (cba) ..controls (108.97bp,321.5bp) and (74.831bp,266.17bp)  .. (cbab);
\draw [orange,very thick] (cb) ..controls (121.0bp,375.6bp) and (121.0bp,356.36bp)  .. (cba);
\draw [orange,very thick] (bacb) ..controls (221.75bp,237.23bp) and (244.67bp,222.76bp)  .. (256.0bp,204.0bp) .. controls (265.56bp,188.17bp) and (267.51bp,165.51bp)  .. (bacba);
\draw [orange,very thick] (cbacb) ..controls (190.25bp,234.89bp) and (208.04bp,218.63bp)  .. (222.0bp,204.0bp) .. controls (238.99bp,186.19bp) and (257.81bp,163.49bp)  .. (bacba);
\draw [orange,very thick] (bcbab) -- (bacba);
\draw [violet,very thick] (bac) ..controls (208.0bp,279.6bp) and (208.0bp,260.36bp)  .. (bacb);
\draw [black,opacity=.5,very thick,dotted] (ba) ..controls (164.29bp,329.67bp) and (193.51bp,306.49bp)  .. (bac);
\draw [orange,very thick] (bc) ..controls (208.0bp,327.6bp) and (208.0bp,308.36bp)  .. (bac);
\draw [violet,very thick] (cbac) ..controls (179.0bp,279.6bp) and (179.0bp,260.36bp)  .. (cbacb);
\draw [orange,very thick] (cbc) ..controls (179.0bp,327.6bp) and (179.0bp,308.36bp)  .. (cbac);
\draw [black,opacity=.5,very thick,dotted] (cba) ..controls (135.29bp,329.67bp) and (164.51bp,306.49bp)  .. (cbac);
\draw [violet,very thick] (bcba) -- (bcbab);
\draw [orange,very thick] (bcb) ..controls (247.05bp,280.83bp) and (264.77bp,259.39bp)  .. (bcba);
\node (abac) at (121.0bp,150.0bp) [draw,ellipse] {};
\node (abacb) at (123.0bp,54.0bp) [draw,ellipse] {};
\node (abacba) at (108.0bp,6.0bp) [fill=orange,ellipse] {};
\node (abc) at (92.0bp,198.0bp) [draw,ellipse] {};
\node (abcb) at (60.0bp,150.0bp) [draw,ellipse] {};
\node (abcba) at (60.0bp,102.0bp) [draw,ellipse] {};
\node (abcbab) at (65.0bp,54.0bp) [draw,ellipse] {};
\node (acbac) at (31.0bp,150.0bp) [draw,ellipse] {};
\node (acbacb) at (31.0bp,102.0bp) [draw,ellipse] {};
\node (acbc) at (5.0bp,198.0bp) [draw,ellipse] {};
\node (babc) at (150.0bp,198.0bp) [draw,ellipse] {};
\node (babcb) at (150.0bp,150.0bp) [draw,ellipse] {};
\node (babcba) at (157.0bp,102.0bp) [draw,ellipse] {};
\node (babcbab) at (154.0bp,54.0bp) [draw,ellipse] {};
\node (bacbac) at (204.0bp,102.0bp) [draw,ellipse] {};
\node (bacbacb) at (193.0bp,54.0bp) [draw,ellipse] {};
\node (bacbc) at (208.0bp,198.0bp) [draw,ellipse] {};
\node (cbabc) at (63.0bp,198.0bp) [draw,ellipse] {};
\node (cbabcb) at (89.0bp,150.0bp) [draw,ellipse] {};
\node (cbabcba) at (89.0bp,102.0bp) [draw,ellipse] {};
\node (cbabcbab) at (94.0bp,54.0bp) [draw,ellipse] {};
\node (cbacbc) at (179.0bp,198.0bp) [draw,ellipse] {};
\draw [black,opacity=.5,very thick,dotted] (ab) ..controls (102.63bp,231.6bp) and (96.357bp,212.36bp)  .. (abc);
\draw [black,opacity=.5,very thick,dotted] (bab) ..controls (146.46bp,231.6bp) and (148.55bp,212.36bp)  .. (babc);
\draw [violet,very thick] (babc) ..controls (150.0bp,183.6bp) and (150.0bp,164.36bp)  .. (babcb);
\draw [orange,very thick] (abc) ..controls (99.951bp,184.39bp) and (113.06bp,163.59bp)  .. (abac);
\draw [violet,very thick] (bacbc) ..controls (193.71bp,185.67bp) and (164.49bp,162.49bp)  .. (babcb);
\draw [violet,very thick] (cbabc) ..controls (70.378bp,183.95bp) and (81.773bp,163.79bp)  .. (cbabcb);
\draw [orange,very thick] (babcb) ..controls (152.04bp,135.6bp) and (154.97bp,116.36bp)  .. (babcba);
\draw [violet,very thick] (cbacbc) ..controls (159.49bp,187.03bp) and (108.94bp,161.19bp)  .. (cbabcb);
\draw [orange,very thick] (babcbab) ..controls (142.29bp,41.287bp) and (120.17bp,19.168bp)  .. (abacba);
\draw [black,opacity=.5,very thick,dotted] (bacb) ..controls (208.0bp,231.6bp) and (208.0bp,212.36bp)  .. (bacbc);
\draw [violet,very thick] (acbac) ..controls (31.0bp,135.6bp) and (31.0bp,116.36bp)  .. (acbacb);
\draw [orange,very thick] (acbacb) ..controls (33.328bp,86.687bp) and (38.29bp,63.409bp)  .. (50.0bp,48.0bp) .. controls (65.071bp,28.168bp) and (92.47bp,14.106bp)  .. (abacba);
\draw [orange,very thick] (abcb) ..controls (60.0bp,135.6bp) and (60.0bp,116.36bp)  .. (abcba);
\draw [black,opacity=.5,very thick,dotted] (acba) ..controls (33.126bp,183.6bp) and (31.871bp,164.36bp)  .. (acbac);
\draw [orange,very thick] (bacbacb) ..controls (173.67bp,42.54bp) and (127.38bp,17.49bp)  .. (abacba);
\draw [black,opacity=.5,very thick,dotted] (cbacb) ..controls (179.0bp,231.6bp) and (179.0bp,212.36bp)  .. (cbacbc);
\draw [orange,very thick] (cbabcbab) ..controls (98.077bp,39.604bp) and (103.93bp,20.363bp)  .. (abacba);
\draw [orange,very thick] (cbabc) ..controls (54.226bp,184.39bp) and (39.761bp,163.59bp)  .. (acbac);
\draw [violet,very thick] (abcba) ..controls (61.456bp,87.604bp) and (63.548bp,68.363bp)  .. (abcbab);
\draw [orange,very thick] (abacb) ..controls (118.63bp,39.604bp) and (112.36bp,20.363bp)  .. (abacba);
\draw [black,opacity=.5,very thick,dotted] (bacba) ..controls (252.12bp,137.59bp) and (219.36bp,114.04bp)  .. (bacbac);
\draw [orange,very thick] (abcbab) ..controls (76.452bp,40.749bp) and (96.819bp,18.962bp)  .. (abacba);
\draw [orange,very thick] (acbc) ..controls (12.378bp,183.95bp) and (23.773bp,163.79bp)  .. (acbac);
\draw [orange,very thick] (babc) ..controls (142.05bp,184.39bp) and (128.94bp,163.59bp)  .. (abac);
\draw [violet,very thick] (abac) ..controls (121.43bp,128.62bp) and (122.56bp,75.823bp)  .. (abacb);
\draw [violet,very thick] (acbc) ..controls (18.55bp,185.67bp) and (46.263bp,162.49bp)  .. (abcb);
\draw [violet,very thick] (abc) ..controls (83.226bp,184.39bp) and (68.761bp,163.59bp)  .. (abcb);
\draw [black,opacity=.5,very thick,dotted] (aba) ..controls (121.0bp,183.6bp) and (121.0bp,164.36bp)  .. (abac);
\draw [violet,very thick] (cbabcba) ..controls (90.456bp,87.604bp) and (92.548bp,68.363bp)  .. (cbabcbab);
\draw [orange,very thick] (cbabcb) ..controls (89.0bp,135.6bp) and (89.0bp,116.36bp)  .. (cbabcba);
\draw [orange,very thick] (cbacbc) ..controls (184.42bp,176.62bp) and (198.46bp,123.82bp)  .. (bacbac);
\draw [orange,very thick] (bacbc) ..controls (207.13bp,176.62bp) and (204.89bp,123.82bp)  .. (bacbac);
\draw [black,opacity=.5,very thick,dotted] (cbab) ..controls (63.0bp,231.6bp) and (63.0bp,212.36bp)  .. (cbabc);
\draw [violet,very thick] (bacbac) ..controls (200.8bp,87.604bp) and (196.2bp,68.363bp)  .. (bacbacb);
\draw [black,opacity=.5,very thick,dotted] (acb) ..controls (26.049bp,232.39bp) and (12.939bp,211.59bp)  .. (acbc);
\draw [violet,very thick] (a) ..controls (61.033bp,321.5bp) and (95.169bp,266.17bp)  .. (ab);
\draw [violet,very thick] (babcba) ..controls (156.13bp,87.604bp) and (154.87bp,68.363bp)  .. (babcbab);
\end{tikzpicture}
\hspace*{-30pt}
}
\vspace*{-10pt}
\caption{The minimal Garside family of the monoid~$\AK^{\ttunit}_{\hspace*{-1pt}+}\!\Big(\scalebox{.54}{
$\begin{bmatrix}1&3&2\\
{\color{orange}4}&1&3\\
2&{\color{orange}4}&1\end{bmatrix}$}\Big)$ from~Example~\ref{ex-akram}.\label{fig-akram}}
\end{minipage} \hfill
\begin{minipage}[b]{.46\linewidth}
 \centering\scalebox{.94}{
	\begin{tikzpicture}[->,>=latex,node distance=12mm,inner sep=1.8pt]
	 	\node[circle,text width=4mm,align=center,fill=orange] (0) {\(\ttunit\)};
	 	\node[circle,text width=4mm,align=center,fill=orange] (a) [above left of=0] {\(\tta\)};
	 	\node[circle,text width=4mm,align=center,fill=orange] (b) [below left of=0] {\(\ttb\)};
	 	\node[circle,text width=4mm,align=center,fill=orange] (ab) [below left of=b] {\(\tta\ttb\)};
	 	\node[circle,text width=4mm,align=center,fill=orange,draw=white] (bb) [above left of=b] {\(\ttb^2\)};
	 	\node[circle,text width=4mm,align=center,fill=orange] (abb) [below left of=bb] {\(\!\tta\ttb^2\!\)};
	 	\node[circle,text width=4mm,align=center,draw] 	 (bbb) [above left of=abb] {\(\ttb^3\)};
	 	\node[circle,text width=4mm,align=center,draw] 	 (ba) [above left of=bbb] {\(\ttb\tta\)};
	 	\node[circle,text width=4mm,align=center,draw] 	 (bbbb) [below left of=bbb] {\(\ttb^4\)};
	 	\node[circle,text width=4mm,align=center,draw] 	 (bab) [below left of=bbbb] {\(\!\ttb\tta\ttb\!\!\)};
	 	\node[circle,text width=4mm,align=center,fill=orange] (bba) [above left of=bbbb] {\(\!\tta\ttb^3\!\)};
	 	\node[circle,text width=4mm,align=center,fill=orange] (abbbb) [above left of=bab] {\(\!\tta\ttb^4\!\)};
	\path[line width=1pt]
		(a) edge[orange]	(0)
		(b) edge[violet]		(0)
		(ab) edge[orange]	(b)
		(bb) edge[violet]	(b)
		(ba) edge[violet]	(a)
		(bba) edge[violet]	(ba)
		(bbb) edge[violet]	(bb)
		(bba) edge[orange]	(bbb)
		(bab) edge[violet]	(ab)
		(abb) edge[orange]	(bb)
		(bbbb) edge[violet]	(bbb)
		(abbbb) edge[orange]	(bbbb)
		(abbbb) edge[violet]	(bab);
	\end{tikzpicture}
	}
\vspace*{-6pt}
\caption{\label{fig-bsThreeTwo}The minimal Garside family of the monoid~$\BS^{\ttunit}_{\hspace*{-1pt}+}\!(3,2)$
from~Example~\ref{ex-bsThreeTwo}.\phantom{\scalebox{.54}{
$\begin{bmatrix}1\\
3\\
1\end{bmatrix}$}
}}
 \end{minipage}
\end{figure}

\begin{example}\label{ex-bsThreeTwo}
Consider the Baumslag--Solitar monoid~$\BS^{\ttunit}_{\hspace*{-1pt}+}\!(3,2)=\presm{~\tta,\ttb:\tta\ttb^3=\ttb^2\tta~}$.
Displayed on Figure~\ref{fig-bsThreeTwo}, its minimal Garside family contains eight elements (orange vertices)
and makes it an automaticon monoid. This is an example of a group-embeddable automaton 
monoid whose enveloping group is not an automaton group. Indeed, the Baumslag--Solitar group
$\BS(3,2)$ is precisely known as an example of non-residually finite group, hence cannot
be an automaton group. The question remains open for those
automaton semigroups whose enveloping group is a group of fractions.
\end{example}

Concerning again group-embeddability, the following gives now an example
of a cancellative automaton semigroup which is not group-embeddable.

\begin{example}\label{ex-malcev}
The monoid~$\Malcev=\presm{~\tta,\ttb,\ttc,\ttd,\tta',\ttb',\ttc',\ttd': \tta\ttb = \ttc\ttd,\tta'\ttb'=\ttc'\ttd',\tta'\ttd=\ttc'\ttb~}$
is known (by Malcev work~\cite{malcev37,malcev39,malcev40}) to be cancellative but not group-embeddable:
from these three relations, we cannot deduce the relation~$\tta\ttd'=\ttc\ttb'$ that holds in the enveloping group.
The quadratic normalisation~$(\{\tta,\ttb,\ttc,\ttd,\tta',\ttb',\ttc',\ttd'\},\NM)$
defined by~$\NM(\tta\ttb)=\ttc\ttd$, $\NM(\tta'\ttb')=\ttc'\ttd'$, and~$\NM(\tta'\ttd)=\ttc'\ttb$
for instance has breadth~$(3,3)$,
hence satisfies Condition~\home\ and Theorem~\ref{thm-main} applies.
This answers in particular a question by Cain \cite{cain-perso}.
\end{example}

Some classes of neither left- nor right-cancellative monoids have been studied and shown
to admit nice normal forms yielding biautomatic structures:

\begin{example}\label{ex-plactic} According to Sch\"utzenberger~\cite{pplax}, plactic monoids are among the most fundamental monoids: they are monoids of Young tableaux.
The rank~2 plactic monoid \hbox{is~${\mathbf P\!}_2=\presm{~\tta,\ttb:\tta\ttb\tta=\ttb\tta\tta,\ttb\tta\ttb=\ttb\ttb\tta~}$.}
As noted in~\cite{dehDLT,dehgui}, ${\mathbf P\!}_2$ admits the quadratic normalisation~$(\Gar,\NM)$ with~$\Gar=\{\ttunit,\tta,\ttb,\ttb\tta\}$,
$\NM(\ttb\tta)=\ttunit(\ttb\tta)$, $\NM((\ttb\tta)\tta)=\tta(\ttb\tta)$,
$\NM((\ttb\tta)\ttb)=\ttb(\ttb\tta)$, $\NM(\ttunit x)=x\ttunit$ for~$x\in\Gar$, and~$\NM(xy)=xy$ otherwise. The latter has a breadth~$(3,3)$, hence satisfies Condition~\home\ and Theorem~\ref{thm-main} ensures that~${\mathbf P\!}_2$ is an automaton monoid.
Note that, for a higher rank plactic monoid~${\mathbf P\!}_X$,
it suffices to take again for~$\Gar$ the set of \emph{columns}, that is,
the strictly decreasing products of elements of~$X$.

\smallbreak The Chinese monoid of rank~$n$ is (see~\cite{chinese})
\[{\mathbf C}_n = \presm{~1<\cdots<n:\ttz\tty\ttx = \ttz\ttx\tty\  = \tty\ttz\ttx\quad \text{for} \ \ttx\le\tty\le\ttz~}.\]
According to~\cite{CGM}, ${\mathbf C}_n$ is also generated by~$\{\ttx:n\ge\ttx\ge 1\}\cup\{\tty\ttx:\tty>\ttx\}$ from which one can deduce an automatic structure. According to~\cite[Example~5.8]{dehgui}, ${\mathbf C}_n$ admits a quadratic normalisation on~$\Gar=\{\ttx:n\ge\ttx\ge 1\}\cup\{\tty\ttx:\tty>\ttx\}\,\cup\,\{\ttx^2:n>\ttx>1\}$ with breadth~$(4,4)$.
We independently compute a quadratic normalisation always on~$\Gar$ now with breadth~$(4,3)$, hence satisfying Condition~\home,
and Theorem~\ref{thm-main} ensures that ${\mathbf C}_n$ is automaticon monoid~\cite{pHdR}.

\end{example}

\begin{example}\label{appendix-max}
The complexity measure of a quadratic normalisation we called \emph{breadth} is crucial.
We have seen that, for a somehow limited breadth, say~$(4,3)$, \emph{aka} Condition~\home,
various semigroups can be reached by our approach and~Theorem~\ref{thm-main}.
Here we precise how a large breadth need not mean specially intricate semigroups.

One can build quadratic normalisations with a (finite) breadth arbitrarily large (see~\cite%[Example~3.16]
{dehDLT,dehgui}).
A~natural question would be to know what is the maximal breadth for a fixed size of~$\Gar$.
For instance, the semigroup
$\WW=\press{~\tta,\ttb,\ttc:\tta\tta=\ttc\ttc=\ttb\ttc , \ttb\tta=\ttc\ttb=\tta\ttb , \ttb\ttb=\ttc\tta=\tta\ttc~}$
%Wandering albatross
%gap> albat:=MealyMachine([ [ 2, 1, 1 ], [ 1, 1, 1 ], [ 1, 2, 2 ] ], [ [ 3, 2, 3 ], [ 2, 3, 2 ], [ 3, 3, 3 ] ]);
%gap> RewritingRules(albat);
%[ [ [ 1, 1 ], [ 2, 3 ] ], [ [ 2, 1 ], [ 1, 2 ] ], [ [ 3, 1 ], [ 1, 3 ] ], [ [ 2, 2 ], [ 1, 3 ] ], [ [ 3, 2 ], [ 1, 2 ] ], [ [ 3, 3 ], [ 2, 3 ] ] ]
admits a quadratic normalisation of breadth~$(11,10)$, see Figures~\ref{fig-breadth1110} and~\ref{fig-ngraph}, that happens to correspond with the maximal breadth for~$\card{\Gar}=3$.

\begin{figure}[h]
\vspace*{-35pt}
\centering
 \rotatebox{-15}{
 %\hspace*{-5pt}
 \begin{tikzpicture}[thick,node distance=12mm,inner sep=1.3pt]
	\node (00) at (0,0) {};
	\node (100) [above of=00] {};
	\begin{scope}[node distance=11mm]
	\node (101) [left of=100] 	{};
	\node (102) [right of=100] 	{};
	\node (103) [left of=101] 	{};
	\node (104) [right of=102] 	{};
	\node (105) [left of=103] 	{};
	\node (106) [right of=104] 	{};
	\node (107) [left of=105] 	{};%\tiny107};
	\node (108) [right of=106] 	{};%\tiny108};
	\node (109) [left of=107] 	{};%\tiny109};
	\node (110) [right of=108] 	{};%\tiny110};
	\node (111) [left of=109] 	{};%\tiny111};
	\node (112) [right of=110] 	{};
	\end{scope}
	\node (200) [above of=100] {};
	\node (201) [above of=101] {};
	\node (202) [above of=102] {};
	\node (203) [above of=103] {};
	\node (204) [above of=104] {};
	\node (205) [above of=105] {};
	\node (206) [above of=106] {};
	\node (207) [above of=107] {};%\tiny207};
	\node (208) [above of=108] {};%\tiny208};
	\node (209) [above of=109] {};%\tiny209};
	\node (210) [above of=110] {};%\tiny210};
	\node (211) [above of=111] {};%\tiny211};
	\node (212) [above of=112] {};%\tiny212};
%	\node (101b) [node distance=13mm,left of=100] {};
%	\node (201b) [node distance=13mm,left of=200] {};
%	\node (102b) [node distance=13mm,right of=100] {};
%	\node (202b) [node distance=13mm,right of=200] {};
%	\node (103b) [node distance=11mm,left of=101] {};
%	\node (203b) [node distance=11mm,left of=201] {};
%	\node (104b) [node distance=13mm,right of=102] {};
%	\node (204b) [node distance=11mm,right of=202] {};
%	\node (105b) [node distance=13mm,left of=103] {};
%	\node (205b) [node distance=13mm,left of=203] {};
%	\node (106b) [node distance=13mm,right of=104] {};
%	\node (206b) [node distance=13mm,right of=204] {};
	\node (30) [above of=200] {};
	%
%	\fill[color=lightgray] (0,0) -- (-3,1.5) -- (-3,3) -- (0,4.5) -- (0,0);
%	\fill[color=lightgray] (0,0) -- (3,3) -- (0,4.5) -- (0,0);
%	\draw[color=white,line width=9pt] (0,0) -- (-3,1.5) -- (-3,3) -- (0,4.5) -- (0,0);
%	\draw[color=white,line width=9pt] (0,0) -- (3,3) -- (0,4.5) -- (0,0);
%	\draw[color=white,line width=9pt] (0,1.5) -- (1.5,3);
%	\draw[color=white,line width=9pt] (1.5,1.5) -- (1.5,3) -- (0,4.5);
%	\draw[color=white,line width=9pt] (-3,1.5) -- (-1.5,3) -- (0,4.5);
%	\draw[color=white,line width=9pt] (-1.5,1.5) -- (0,3);
%	\draw[color=white,line width=9pt] (0,0) -- (-1.5,1.5) -- (-1.5,3);
	\path[->,>=latex,line width=1.4pt]	(30)	edge 		node[pos=.62,fill=white,rotate=15]{\footnotesize$\!\ttc\!$} (200);
	\path[->,>=latex,line width=1.4pt]	(200)	edge			node[pos=.49,fill=white,rotate=15]{\footnotesize$\!\ttb\!$} (100);%\ttb
	\path[->,>=latex,line width=1.4pt]	(100)	edge			node[pos=.32,fill=white,rotate=15]{\footnotesize$\!\tta\!$} (00);
	\path[->,>=latex] 	(30)	edge			node[pos=.62,fill=white,rotate=15]{\footnotesize$\!\tta\!$} (201);
	\path[->,>=latex]	(30)	edge			node[pos=.62,fill=white,rotate=15]{\footnotesize$\!\tta\!$} (202);
	\path[->,>=latex]	(30)	edge			node[pos=.62,fill=white,rotate=15]{\footnotesize$\!\ttb\!$} (203);
	\path[->,>=latex]	(30)	edge			node[pos=.62,fill=white,rotate=15]{\footnotesize$\!\ttb\!$} (204);
	\path[->,>=latex]	(30)	edge			node[pos=.62,fill=white,rotate=15]{\footnotesize$\!\tta\!$} (205);
	\path[->,>=latex]	(30)	edge			node[pos=.62,fill=white,rotate=15]{\footnotesize$\!\tta\!$} (206);
	\path[->,>=latex]	(30)	edge			node[pos=.62,fill=white,rotate=15]{\footnotesize$\!\ttb\!$} (207);
	\path[->,>=latex]	(30)	edge			node[pos=.62,fill=white,rotate=15]{\footnotesize$\!\ttb\!$} (208);
	\path[->,>=latex]	(30)	edge			node[pos=.62,fill=white,rotate=15]{\footnotesize$\!\tta\!$} (209);
	\path[->,>=latex]	(30)	edge			node[pos=.62,fill=white,rotate=15]{\footnotesize$\!\tta\!$} (210);
	%\path[->,>=latex]	(30)	edge			node[pos=.62,fill=white,rotate=15]{\footnotesize$\!\tta\!$} (211);
	%\path[->,>=latex]	(30)	edge			node[pos=.62,fill=white,rotate=15]{\footnotesize$\!\tta\!$} (212);
	\path[->,>=latex]	(200)	edge			node[pos=.49,fill=white,rotate=15]{\footnotesize$\!\tta\!$} (101);
	\path[->,>=latex]	(201)	edge			node[pos=.49,fill=white,rotate=15]{\footnotesize$\!\ttc\!$} (101);
	\path[->,>=latex]	(201)	edge			node[pos=.49,fill=white,rotate=15]{\footnotesize$\!\tta\!$} (103);
	\path[->,>=latex]	(202)	edge			node[pos=.49,fill=white,rotate=15]{\footnotesize$\!\ttb\!$} (100);
	\path[->,>=latex]	(202)	edge			node[pos=.49,fill=white,rotate=15]{\footnotesize$\!\tta\!$} (102);
	\path[->,>=latex]	(203)	edge			node[pos=.49,fill=white,rotate=15]{\footnotesize$\!\ttc\!$} (103);
	\path[->,>=latex]	(203)	edge			node[pos=.49,fill=white,rotate=15]{\footnotesize$\!\tta\!$} (105);
	\path[->,>=latex]	(204)	edge			node[pos=.49,fill=white,rotate=15]{\footnotesize$\!\ttc\!$} (102);
	\path[->,>=latex]	(204)	edge			node[pos=.49,fill=white,rotate=15]{\footnotesize$\!\tta\!$} (104);
	\path[->,>=latex]	(205)	edge			node[pos=.49,fill=white,rotate=15]{\footnotesize$\!\ttb\!$} (105);
	\path[->,>=latex]	(205)	edge			node[pos=.49,fill=white,rotate=15]{\footnotesize$\!\tta\!$} (107);	
	\path[->,>=latex]	(206)	edge			node[pos=.49,fill=white,rotate=15]{\footnotesize$\!\ttb\!$} (104);
	\path[->,>=latex]	(206)	edge			node[pos=.49,fill=white,rotate=15]{\footnotesize$\!\tta\!$} (106);
	\path[->,>=latex]	(207)	edge			node[pos=.49,fill=white,rotate=15]{\footnotesize$\!\ttc\!$} (107);
	\path[->,>=latex]	(207)	edge			node[pos=.49,fill=white,rotate=15]{\footnotesize$\!\ttb\!$} (109);
	\path[->,>=latex]	(208)	edge			node[pos=.49,fill=white,rotate=15]{\footnotesize$\!\ttc\!$} (106);
	\path[->,>=latex]	(208)	edge			node[pos=.49,fill=white,rotate=15]{\footnotesize$\!\ttb\!$} (108);
	\path[->,>=latex]	(209)	edge			node[pos=.49,fill=white,rotate=15]{\footnotesize$\!\ttc\!$} (109);
	\path[->,>=latex]	(209)	edge			node[pos=.49,fill=white,rotate=15]{\footnotesize$\!\ttb\!$} (111);
	\path[->,>=latex]	(210)	edge			node[pos=.49,fill=white,rotate=15]{\footnotesize$\!\ttc\!$} (108);	
	\path[->,>=latex]	(210)	edge			node[pos=.49,fill=white,rotate=15]{\footnotesize$\!\ttb\!$} (110);
%	\path[->,>=latex]	(211)	edge			node[pos=.49,fill=white,rotate=15]{\footnotesize$\!\ttb\!$} (111);
%	\path[->,>=latex]	(212)	edge			node[pos=.49,fill=white,rotate=15]{\footnotesize$\!\ttb\!$} (110);
%	\path[->,>=latex]	(212)	edge			node[pos=.49,fill=white,rotate=15]{\footnotesize$\!\ttc\!$} (112);
	\path[->,>=latex]	(101)	edge			node[pos=.32,fill=white,rotate=15]{\footnotesize$\!\ttb\!$} (00);
	\path[->,>=latex]	(102)	edge			node[pos=.32,fill=white,rotate=15]{\footnotesize$\!\ttb\!$} (00);
	\path[->,>=latex]	(103)	edge			node[pos=.32,fill=white,rotate=15]{\footnotesize$\!\ttb\!$} (00);
	\path[->,>=latex]	(104)	edge			node[pos=.32,fill=white,rotate=15]{\footnotesize$\!\ttb\!$} (00);
	\path[->,>=latex]	(105)	edge			node[pos=.32,fill=white,rotate=15]{\footnotesize$\!\ttb\!$} (00);
	\path[->,>=latex]	(106)	edge			node[pos=.32,fill=white,rotate=15]{\footnotesize$\!\ttc\!$} (00);
	\path[->,>=latex]	(107)	edge			node[pos=.32,fill=white,rotate=15]{\footnotesize$\!\ttc\!$} (00);
	\path[->,>=latex]	(108)	edge			node[pos=.32,fill=white,rotate=15]{\footnotesize$\!\ttc\!$} (00);
	\path[->,>=latex]	(109)	edge			node[pos=.32,fill=white,rotate=15]{\footnotesize$\!\ttc\!$} (00);
	\path[->,>=latex]	(110)	edge			node[pos=.32,fill=white,rotate=15]{\footnotesize$\!\ttc\!$} (00);
	\path[->,>=latex]	(111)	edge			node[pos=.32,fill=white,rotate=15]{\footnotesize$\!\ttc\!$} (00);
%	\path[->,>=latex]	(112)	edge			node[pos=.32,fill=white,rotate=15]{\footnotesize$\!\ttc\!$} (00);
%	\path[->,>=latex]
%	(100)	edge[densely dotted,white] 		node[fill=lightgray]{$\!\NM\!$}	(101b)
%	(101)	edge[densely dotted,white] 		node[fill=lightgray]{$\!\NM\!$}	(103b)
%	(100)	edge[densely dotted,white] 		node[fill=lightgray]{$\!\NM\!$}	(102b)
%	(102)	edge[thin,densely dotted,gray] 		node[fill=white]{$\!\NM\!$}	(104b)
%	(103)	edge[thin,densely dotted,gray] 		node[fill=white]{$\!\NM\!$}	(105b)
%	(104)	edge[thin,densely dotted,gray] 		node[fill=white]{$\!\NM\!$}	(106b)
%	(200)	edge[densely dotted,white] 		node[fill=lightgray]{$\!\NM\!$}	(201b)
%	(201)	edge[densely dotted,white] 		node[fill=lightgray]{$\!\NM\!$}	(203b)
%	(200)	edge[densely dotted,white] 		node[fill=lightgray]{$\!\NM\!$}	(202b)
%	(202)	edge[densely dotted,white] 		node[fill=lightgray]{$\!\NM\!$}	(204b)
%	(203)	edge[thin,densely dotted,gray] 		node[fill=white]{$\!\NM\!$}	(205b)
%	(204)	edge[thin,densely dotted,gray] 		node[fill=white]{$\!\NM\!$}	(206b);
\end{tikzpicture}
}
\vspace*{-50pt}
\caption{From the initial word~$\ttc\ttb\tta$, one applies normalisations from~$\WW$ on the first and the second~$2$-factors alternatively up to stabilisation,
beginning on the first $2$-factor, namely~$\ttc\ttb$ (on the right-hand side here) or on the second, namely~$\ttb\tta$. This witnesses the large breadth~$(11,10)$ for~$\WW$.
}%
\label{fig-breadth1110}
\end{figure}

Such a large breadth corresponds with a great height of the associated $\NMbar$-graph as displayed on Figure~\ref{fig-ngraph}.

\begin{figure}[h]
\centering
\scalebox{.83}{
\begin{tikzpicture}[>=latex,xscale=.7,yscale=.27]
  \begin{scope}[inner sep=3pt,rounded corners,minimum height=3.5mm]
  \node (cbb) at (275.12bp,450.0bp) [draw] 	{$	\tta\tta\tta	$}	;
  \node (cbc) at (72.118bp,234.0bp) [draw] 	{$	\ttc\ttb\ttc	$}	;
  \node (cba) at (125.12bp,810.0bp) [draw] 	{$	\ttc\ttb\tta	$}	;
  \node (aba) at (156.12bp,666.0bp) [draw] 	{$	\tta\ttb\tta	$}	;
  \node (abb) at (255.12bp,378.0bp) [draw] 	{$	\tta\ttb\ttb	$}	;
  \node (abc) at (135.12bp,18.0bp) [draw] 		{$	\tta\ttb\ttc	$}	;
  \node (bbc) at (321.12bp,162.0bp) [draw] 	{$	\ttb\ttb\ttc	$}	;
  \node (bbb) at (35.118bp,738.0bp) [draw] 	{$	\ttb\ttb\ttb	$}	;
  \node (bba) at (176.12bp,522.0bp) [draw] 	{$	\ttb\ttb\tta	$}	;
  \node (acc) at (258.12bp,90.0bp) [draw] 		{$	\tta\ttc\ttc	$}	;
  \node (acb) at (93.118bp,666.0bp) [draw] 	{$	\tta\ttc\ttb	$}	;
  \node (aca) at (188.12bp,378.0bp) [draw] 	{$	\tta\ttc\tta	$}	;
  \node (aaa) at (134.12bp,234.0bp) [draw] 	{$	\tta\tta\tta	$}	;
  \node (aac) at (255.12bp,306.0bp) [draw] 	{$	\tta\tta\ttc	$}	;
  \node (aab) at (106.12bp,594.0bp) [draw] 	{$	\tta\tta\ttb	$}	;
  \node (bca) at (196.12bp,162.0bp) [draw] 	{$	\ttb\ttc\tta	$}	;
  \node (bcb) at (106.12bp,522.0bp) [draw] 	{$	\ttb\ttc\ttb	$}	;
  \node (bcc) at (258.12bp,234.0bp) [draw] 	{$	\ttb\ttc\ttc	$}	;
  \node (bab) at (137.12bp,450.0bp) [draw] 	{$	\ttb\tta\ttb	$}	;
  \node (bac) at (110.12bp,90.0bp) [draw] 		{$	\ttb\tta\ttc	$}	;
  \node (baa) at (274.12bp,738.0bp) [draw] 	{$	\ttb\tta\tta	$}	;
  \node (cac) at (258.12bp,162.0bp) [draw] 	{$	\ttc\tta\ttc	$}	;
  \node (cab) at (99.118bp,738.0bp) [draw] 	{$	\ttc\tta\ttb	$}	;
  \node (caa) at (73.118bp,450.0bp) [draw] 	{$	\ttc\tta\tta	$}	;
  \node (cca) at (196.12bp,234.0bp) [draw] 	{$	\ttc\ttc\tta	$}	;
  \node (ccc) at (158.12bp,306.0bp) [draw] 	{$	\ttc\ttc\ttc	$}	;
  \node (ccb) at (36.118bp,810.0bp) [draw] 	{$	\ttc\ttc\ttb	$}	;
  \end{scope}
  \draw [->,thick] (aab) ..controls (106.12bp,567.98bp) and (106.12bp,558.71bp)  .. (bcb);				
  \draw [double,->,thick] (aca) ..controls (210.68bp,353.43bp) and (224.03bp,339.48bp)  .. (aac);				
  \draw [double,->,thick] (ccb) ..controls (57.662bp,785.06bp) and (69.524bp,771.88bp)  .. (cab);				
  \draw [->,thick] (baa) ..controls (237.1bp,715.04bp) and (205.02bp,696.01bp)  .. (aba);				
  \draw [double,->,thick] (bcb) ..controls (117.02bp,496.37bp) and (121.65bp,485.93bp)  .. (bab);				
  \draw [->,thick] (cac) ..controls (258.12bp,135.98bp) and (258.12bp,126.71bp)  .. (acc);				
  \draw [double,->,thick] (bca) ..controls (167.54bp,137.73bp) and (148.3bp,122.08bp)  .. (bac);				
  \draw [double,->,thick] (cca) ..controls (217.11bp,209.3bp) and (229.05bp,195.82bp)  .. (cac);				
  \draw [double,->,thick] (bcc) ..controls (279.66bp,209.06bp) and (291.52bp,195.88bp)  .. (bbc);				
  \draw [->,thick] (bbb) ..controls (55.068bp,712.92bp) and (65.617bp,700.19bp)  .. (acb);				
  \draw [double,->,thick] (ccc) ..controls (129.9bp,282.03bp) and (110.39bp,266.15bp)  .. (cbc);				
  \draw [double,->,thick] (cba) ..controls (115.91bp,784.22bp) and (112.17bp,774.14bp)  .. (cab);				
  \draw [->,thick] (ccc) ..controls (190.2bp,282.54bp) and (215.01bp,265.18bp)  .. (bcc);				
  \draw [->,thick] (bbc) ..controls (299.57bp,137.06bp) and (287.71bp,123.88bp)  .. (acc);				
  \draw [->,thick] (bab) ..controls (174.34bp,426.92bp) and (206.06bp,408.1bp)  .. (abb);				
  \draw [->,thick] (ccb) ..controls (16.498bp,784.66bp) and (7.2294bp,770.38bp)  .. (3.1179bp,756.0bp) .. controls (-1.2799bp,740.62bp) and (-0.5581bp,735.57bp)  .. (3.1179bp,720.0bp) .. controls (19.221bp,651.78bp) and (64.313bp,581.21bp)  .. (bcb);				
  \draw [->,thick] (bac) ..controls (118.97bp,64.216bp) and (122.57bp,54.14bp)  .. (abc);				
  \draw [->,thick] (cbb) ..controls (268.02bp,424.14bp) and (265.27bp,414.54bp)  .. (abb);				
  \draw [double,->,thick] (acb) ..controls (97.697bp,640.35bp) and (99.427bp,631.03bp)  .. (aab);				
  \draw [double,->,thick] (abb) ..controls (255.12bp,351.98bp) and (255.12bp,342.71bp)  .. (aac);				
  \draw [double,->,thick] (caa) ..controls (72.865bp,394.83bp) and (72.455bp,307.18bp)  .. (cbc);				
  \draw [double,->,thick] (cbb) ..controls (289.64bp,393.42bp) and (310.42bp,294.62bp)  .. (289.12bp,216.0bp) .. controls (286.23bp,205.35bp) and (280.61bp,194.65bp)  .. (cac);				
  \draw [->,thick] (cba) ..controls (134.11bp,767.81bp) and (143.99bp,722.57bp)  .. (aba);				
  \draw [->,thick] (caa) ..controls (109.2bp,427.04bp) and (140.46bp,408.01bp)  .. (aca);				
  \draw [->,thick] (cab) ..controls (96.974bp,711.98bp) and (96.179bp,702.71bp)  .. (acb);				
  \draw [double,->,thick] (baa) ..controls (307.7bp,698.63bp) and (346.12bp,646.69bp)  .. (346.12bp,595.0bp) .. controls (346.12bp,595.0bp) and (346.12bp,595.0bp)  .. (346.12bp,305.0bp) .. controls (346.12bp,264.38bp) and (336.07bp,218.2bp)  .. (bbc);				
  \draw [double,->,thick] (bba) ..controls (162.43bp,496.43bp) and (156.33bp,485.49bp)  .. (bab);				
  \draw [->,thick] (bba) ..controls (179.61bp,479.67bp) and (183.37bp,435.21bp)  .. (aca);				
  \draw [double,->,thick] (bbb) ..controls (28.761bp,693.4bp) and (22.118bp,640.3bp)  .. (22.118bp,595.0bp) .. controls (22.118bp,595.0bp) and (22.118bp,595.0bp)  .. (22.118bp,233.0bp) .. controls (22.118bp,183.11bp) and (61.852bp,136.15bp)  .. (bac);				
  \draw [->,thick] (cbc) ..controls (63.881bp,185.62bp) and (57.046bp,120.36bp)  .. (79.118bp,72.0bp) .. controls (85.737bp,57.497bp) and (98.317bp,45.018bp)  .. (abc);				
  \draw [->,thick] (aaa) ..controls (155.0bp,209.42bp) and (166.74bp,196.17bp)  .. (bca);				
  \draw [double,->,thick] (aaa) ..controls (138.97bp,186.07bp) and (144.07bp,124.24bp)  .. (141.12bp,72.0bp) .. controls (140.64bp,63.549bp) and (139.74bp,54.376bp)  .. (abc);				
  \draw [double,->,thick] (aba) ..controls (138.89bp,640.87bp) and (130.21bp,628.73bp)  .. (aab);				
  \draw [->,thick] (cca) ..controls (196.12bp,207.98bp) and (196.12bp,198.71bp)  .. (bca);				
  \draw [->,thick] (aac) ..controls (256.19bp,279.98bp) and (256.59bp,270.71bp)  .. (bcc);				
  \draw [double,->,thick] (acc) ..controls (220.18bp,67.408bp) and (185.67bp,47.767bp)  .. (abc);				
\end{tikzpicture}
}
\vspace*{-5pt}
\caption{The $\NMbar$-graph for the quadratic normalisation associated with~$\WW$: simple arrows correspond to~$\NMbar_1$ and double arrows to~$\NMbar_2$, while loops are simply omitted for better readability.}
\label{fig-ngraph}
\end{figure}

It becomes clear that the larger the breadth, the higher the $\NMbar$-graph, the most the semigroup approximates~$\press{~a:~~}$.
This can be compared with a null breadth, that is, with a quadratic normalisation with no rules at all, which generates the rank~$\card{\Gar}$ free semigroup.
\end{example}

To conclude, we would like to illustrate the duality between
"being an automatic semigroup" and "being an automaton semigroup"
by highlighting a paradigmatic example.

\begin{example}\label{ex-braid} The braid monoids were chosen by Thurston \cite[Chapter~9]{EpsteinWord} to describe his idea to build
a single transducer that computes the so-called Adjan--Garside--Thurston normal form via iterated runs.
The (classical) $n$-strand braid monoid is
\[\BB^{\ttunit}_{\hspace*{-1pt}n+}=\Bigg\langle \sigma_1,\ldots,\sigma_{n-1}:
\scalebox{1}{$\begin{array}{cr}\sigma_{i}\sigma_{j}\sigma_{i}=\sigma_{j}\sigma_{i}\sigma_{j}&\hbox{ for }|i-j|\leq 1\\
\sigma_{i}\sigma_{j}=\sigma_{j}\sigma_{i}&\hbox{ for }|i-j|>1\end{array}$}\ \Bigg\rangle^{\ttunit}_{\hspace*{-1pt}+}.\]
Figure~\ref{fig-comb} illustrates the combing of some 4-strand braid diagram to obtain its $\NM$-normal form,
mimicking the square diagrams from Definitions~\ref{def-norm} and~\ref{def-msqn}.
A small triangle stands for the beginning of a strand. The initial diagram runs from north-east corner to west.
One step of normalisation applies on the west-most 2-window, and so on. 

\begin{figure}[h]
\begin{center}
\includegraphics[width=.79\textwidth]{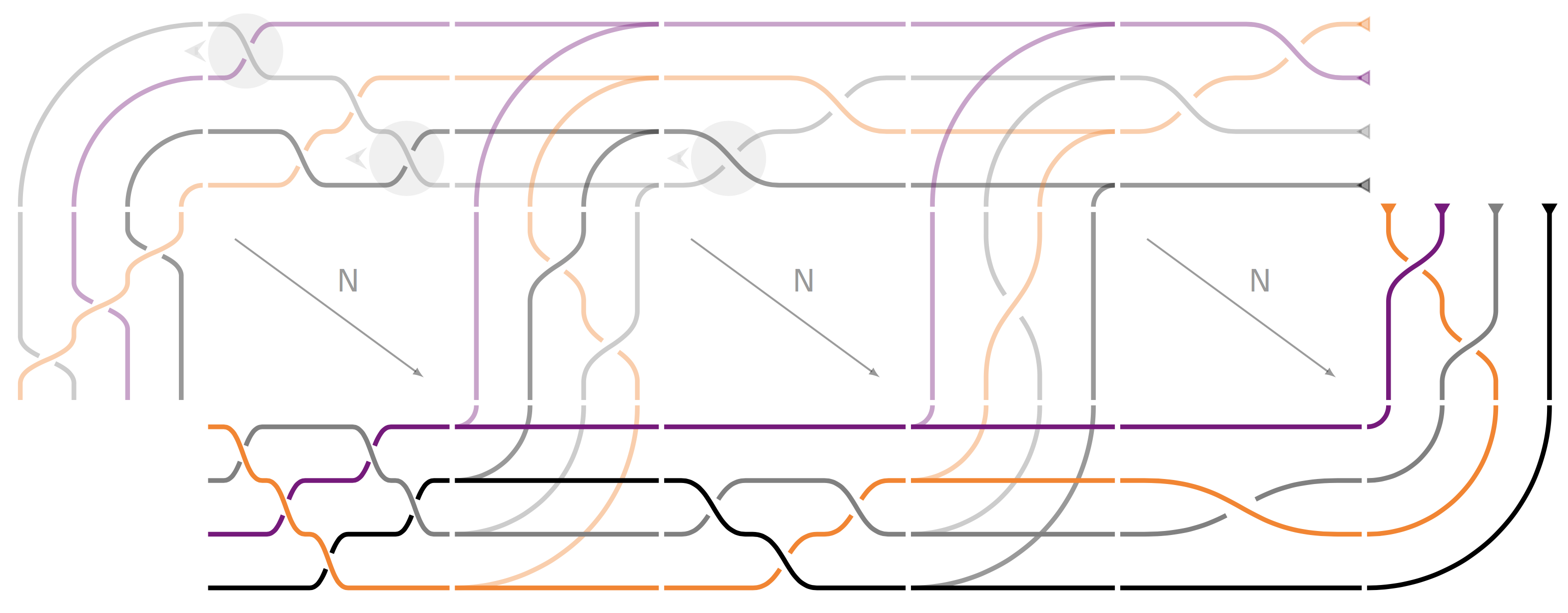}
\end{center}
\vspace*{-12pt}
\caption{The combing of some 4-strand braid diagram to obtain its $\NM$-normal form.
}%
\label{fig-comb}
\end{figure}

Garside theory allows to build a suitable generating set~$\Gar$ of size~$n!$ and a corresponding quadratic normalisation with breadth~$(3,3$).
According to Corollary~\ref{cor-duality}, its Thurston transducer 
and its Mealy automaton make therefore~$\BB^{\ttunit}_{n+}$ both an automatic and an automaton monoid. 
The 3-strand braid monoid is
$\BB^{\ttunit}_{\hspace*{-1pt}3\!+}=\presm{~\scalebox{.8}{\BRICa},\scalebox{.8}{\BRICb}:\scalebox{.8}{\BRICa}\scalebox{.8}{\BRICb}\scalebox{.8}{\BRICa}=\scalebox{.8}{\BRICb}\scalebox{.8}{\BRICa}\scalebox{.8}{\BRICb}~}$.
A fragment of its Thurston transducer is displayed beforehand on Figure~\ref{fig-thurston}.
Its Thurston transducer and Mealy automaton are displayed on Figure~\ref{fig-braid}.

\begin{figure}[h]
\begin{center}
\includegraphics[width=.79\textwidth]{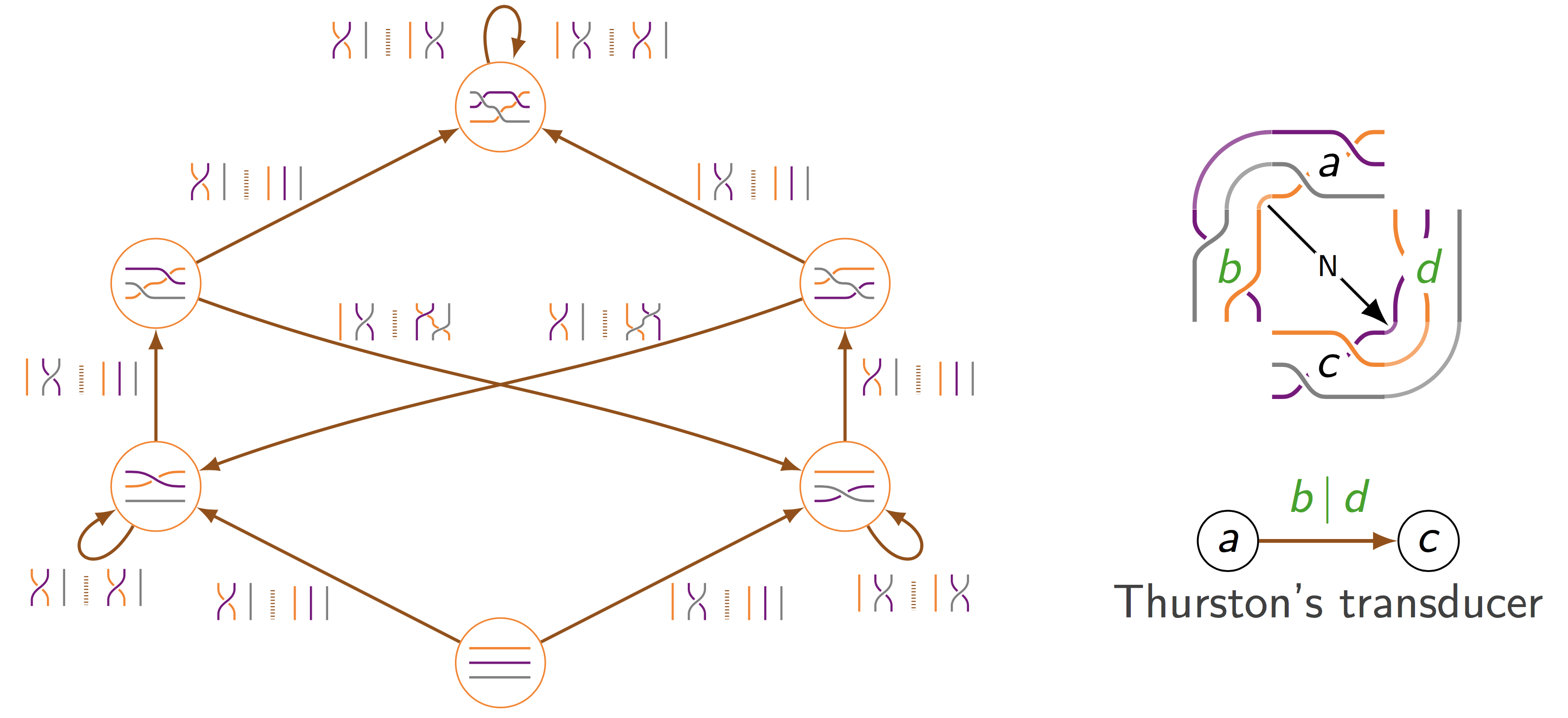}
\end{center}
\vspace*{-12pt}
\caption{A fragment of the Thurston transducer for~$\BB^{\ttunit}_{\hspace*{-1pt}3\!+}$:
while the stateset is complete,
the input alphabet is here restricted
to the two initial generators~$\scalebox{.8}{\BRICaS}$ and~$\scalebox{.8}{\BRICbS}$\ .
}%
\label{fig-thurston}
\end{figure}

Such an approach may hopefully shed some light on the question of whether or not the braid groups are self-similar (Problem~B).
In particular, a positive answer to our following Problem~C would imply a positive answer to Problem~B.
\begin{quote}
{\sffamily\bfseries\raggedright Problem C.} Is the group of fractions of an automaton monoid an automaton group?
\end{quote}

\begin{figure}[H]
\centering
\scalebox{1.9}{%1.9
\begin{tikzpicture}
\node (ic) at (0,0){\BThreeAutomatic};
\node (on) at (0,-5.8) {\BThreeAutomaton};
\node (01) at (-3,-2.3) {};
\node (00) at (-3,-3.5) {};
\node (11) at (3,-2.3) {};
\node (10) at (3,-3.5) {};
\path[->,>=latex] (00) edge[bend left] node[left]	{\scriptsize$\dz$} (01);
\path[->,>=latex] (11) edge[bend left] node[right]{\scriptsize$\dz$} (10);
\end{tikzpicture}
}
\vspace*{-10pt}
\caption{The Thurston transducer (top) \emph{vs} the Mealy automaton (bottom) for the $3$-strand braid monoid~$\BB^{\ttunit}_{3\!+}$ from~Example~\ref{ex-braid}.
}%
\label{fig-braid}
\end{figure}

\end{example}

%---------------------------------------------------------------------------------------------------------------------------------
\nocite{pOneRelator,pTransducer,cant,spol,darla}

\bibliography{automaticon}

\def\cprime{$'$} \def\cprime{$'$} \def\cprime{$'$}
\begin{thebibliography}{10}

\bibitem{AKLMP12}
Ali Akhavi, Ines Klimann, Sylvain Lombardy, Jean Mairesse, and Matthieu
  Picantin.
\newblock On the finiteness problem for automaton (semi)groups.
\newblock {\em Internat. J. Algebra Comput.}, 22(6):1--26, 2012.

\bibitem{aleshin}
Stanislas~V. Al{\"{e}}{\v{s}}in.
\newblock Finite automata and the {B}urnside problem for periodic groups.
\newblock {\em Mat. Zametki}, 11:319--328, 1972.

\bibitem{aleshin:free}
Stanislav~V. Al{\"{e}}{\v{s}}in.
\newblock A free group of finite automata.
\newblock {\em Vestnik Moskov. Univ. Ser. I Mat. Mekh.}, 4:12--14, 1983.

\bibitem{Avizienis61}
Algirdas Avi{\v{z}}ienis.
\newblock Signed-digit number representations for fast parallel arithmetic.
\newblock {\em {IRE} Trans. Electronic Computers}, 10(3):389--400, 1961.

\bibitem{bartholdi:fr}
Laurent Bartholdi.
\newblock {\em FR -- GAP package ``Computations with functionally recursive
  groups'', Version 2.1.1}, 2014.
\newblock URL: \url{http://www.gap-system.org/Packages/fr.html}.

\bibitem{spol}
Laurent Bartholdi, Thibault Godin, Ines Klimann, and Matthieu Picantin.
\newblock A new hierarchy for automaton semigroups.
\newblock In {\em 23rd International Conference on Implementation and
  Applications of Automata (CIAA 2018)}, volume 10977 of {\em LNCS}, pages
  71--83, 2018.

\bibitem{BS10}
Laurent Bartholdi and Pedro~V. Silva.
\newblock Groups defined by automata.
\newblock In J.-{\'{E}}. Pin, editor, {\em AutoMathA Handbook}. Europ. Math.
  Soc., 2010.
\newblock "cs.FL/1012.1531".

\bibitem{BS10b}
Laurent Bartholdi and Pedro~V. Silva.
\newblock Rational subsets of groups.
\newblock In J.-{\'{E}}. Pin, editor, {\em AutoMathA Handbook}. Europ. Math.
  Soc., 2010.
\newblock "cs.FL/1012.1532".

\bibitem{BZsolvable}
Laurent Bartholdi and Zoran {\v{S}}uni{\'k}.
\newblock Some solvable automaton groups.
\newblock In {\em Topological and asymptotic aspects of group theory}, volume
  394 of {\em Contemp. Math.}, pages 11--29. Amer. Math. Soc., Providence, RI,
  2006.

\bibitem{bondarenko2_sz:conjugacy13}
Ievgen~V. Bondarenko, Natalia~V. Bondarenko, Sa{\"{\i}}d~N. Sidki, and
  Flavia~R. Zapata.
\newblock On the conjugacy problem for finite-state automorphisms of regular
  rooted trees.
\newblock {\em Groups Geom. Dyn.}, 7(2):323--355, 2013.
\newblock With an appendix by Rapha{\"e}l M. Jungers.

\bibitem{BroughCain15}
Tara Brough and Alan~J. Cain.
\newblock Automaton semigroup constructions.
\newblock {\em Semigroup Forum}, 90(3):763--774, 2015.

\bibitem{BroughCain16}
Tara Brough and Alan~J. Cain.
\newblock Automaton semigroups: new constructions results and examples of
  non-automaton semigroups.
\newblock {\em Theoret. Comput. Sci.}, 674:1--15, 2017.

\bibitem{aim}
Kai-Uwe Bux et~al.
\newblock Selfsimilar groups and conformal dynamics - {P}roblem {L}ist.
\newblock {AIM} workshop 2006.
\newblock URL: \url{http://www.aimath.org/WWN/selfsimgroups/selfsimgroups.pdf}.

\bibitem{Cain}
Alan~J. Cain.
\newblock Automaton semigroups.
\newblock {\em Theoret. Comput. Sci.}, 410(47-49):5022--5038, 2009.

\bibitem{cain-perso}
Alan~J. Cain.
\newblock Personal communication, 2016.

\bibitem{CGM}
Alan~J. Cain, Robert~D. Gray, and Ant{\'o}nio Malheiro.
\newblock Rewriting systems and biautomatic structures for {C}hinese,
  hypoplactic, and {S}ylvester monoids.
\newblock {\em Internat. J. Algebra Comput.}, 25(1-2):51--80, 2015.

\bibitem{CRRT}
Colin~M. Campbell, Edmund~F. Robertson, Nikola Ru{\v{s}}kuc, and Richard~M.
  Thomas.
\newblock Automatic semigroups.
\newblock {\em Theoret. Comput. Sci.}, 250(1-2):365--391, 2001.

\bibitem{chinese}
Julien Cassaigne, Marc Espie, Daniel Krob, Jean-Christophe Novelli, and Florent
  Hivert.
\newblock The {C}hinese monoid.
\newblock {\em Internat. J. Algebra Comput.}, 11(3):301--334, 2001.

\bibitem{Cauchy1840}
Augustin-Louis Cauchy.
\newblock {\em Sur les moyens d'{\'e}viter les erreurs dans les calculs
  num{\'e}riques}, volume~5 of {\em Cambridge Library Collection -
  Mathematics}, pages 431--442.
\newblock Cambridge University Press, 2009.

\bibitem{DGKPR16}
Daniele D'Angeli, Thibault Godin, Ines Klimann, Matthieu Picantin, and Emanuele
  Rodaro.
\newblock Boundary action of automaton groups without singular points and
  {W}ang tilings.
\newblock Submitted, 2016.
\newblock URL: \url{http://arxiv.org/abs/1604.07736}.

\bibitem{dehDLT}
Patrick Dehornoy.
\newblock Garside and quadratic normalisation: a survey.
\newblock In {\em 19th International Conference on Developments in Language
  Theory (DLT 2015)}, volume 9168 of {\em LNCS}, pages 14--45, 2015.

\bibitem{dehornoyTheory}
Patrick Dehornoy et~al.
\newblock {\em Foundations of {G}arside theory}.
\newblock Europ. Math. Soc. Tracts in Mathematics, volume 22, 2015.
\newblock URL: \url{http://www.math.unicaen.fr/~garside/Garside.pdf}.

\bibitem{dehgui}
Patrick Dehornoy and Yves Guiraud.
\newblock Quadratic normalization in monoids.
\newblock {\em Internat. J. Algebra Comput.}, 26(5):935--972, 2016.

\bibitem{ElderPhD}
Murray Elder.
\newblock {\em Automaticity, almost convexity and falsification by fellow
  traveler properties of some finitely presented groups}.
\newblock PhD thesis, Univ Melbourne, 2000.

\bibitem{EpsteinWord}
David~B.~A. Epstein, James~W. Cannon, Derek~F. Holt, Silvio~V.~F. Levy,
  Michael~S. Paterson, and William~P. Thurston.
\newblock {\em Word processing in groups}.
\newblock Jones and Bartlett Publishers, Boston, MA, 1992.

\bibitem{gillibert:finiteness14}
Pierre Gillibert.
\newblock The finiteness problem for automaton semigroups is undecidable.
\newblock {\em Internat. J. Algebra Comput.}, 24(1):1--9, 2014.

\bibitem{godin_kp:jir}
Thibault Godin, Ines Klimann, and Matthieu Picantin.
\newblock On torsion-free semigroups generated by invertible reversible {M}ealy
  automata.
\newblock In {\em 9th International Conference on Language and Automata Theory
  and Applications (LATA 2015)}, pages 328--339, 2015.

\bibitem{grigorch:burnside}
Rostislav~I. Grigorchuk.
\newblock On {B}urnside's problem on periodic groups.
\newblock {\em Funktsional. Anal. i Prilozhen.}, 14(1):53--54, 1980.

\bibitem{grigorch:degrees}
Rostislav~I. Grigorchuk.
\newblock Degrees of growth of finitely generated groups and the theory of
  invariant means.
\newblock {\em Izv. Akad. Nauk SSSR Ser. Mat.}, 48(5):939--985, 1984.

\bibitem{darla}
Yves Guiraud and Matthieu Picantin.
\newblock Resolutions by differential graded polygraphs.
\newblock In preparation, 2018.

\bibitem{Hes}
Alexander Hess.
\newblock {\em Factorable monoids: resolutions and homology via discrete Morse
  theory}.
\newblock PhD thesis, Univ Bonn, 2012.
\newblock URL: \url{http://hss.ulb.uni-bonn.de/2012/2932/2932.pdf}.

\bibitem{HeOz}
Alexander Hess and Viktoriya Ozornova.
\newblock Factorability, string rewriting and discrete morse theory.
\newblock Submitted.
\newblock URL: \url{http://arxiv.org/abs/1412.3025}.

\bibitem{HoffmannPhD}
Michael Hoffmann.
\newblock {\em Automatic Semigroups}.
\newblock PhD thesis, Univ Leicester, 2001.

\bibitem{HT}
Michael Hoffmann and Richard~M. Thomas.
\newblock Biautomatic semigroups.
\newblock In {\em 15th International Symposium on Fundamentals of Computation
  Theory (FCT 2005)}, volume 3623 of {\em LNCS}, pages 56--67, 2005.

\bibitem{klimann:finiteness}
Ines Klimann.
\newblock {The finiteness of a group generated by a 2-letter
  invertible-reversible Mealy automaton is decidable}.
\newblock In {\em 30th International Symposium on Theoretical Aspects of
  Computer Science (STACS 2013)}, volume~20 of {\em LIPIcs}, pages 502--513,
  2013.

\bibitem{KMP12}
Ines Klimann, Jean Mairesse, and Matthieu Picantin.
\newblock Implementing computations in automaton (semi)groups.
\newblock In {\em 17th International Conference on Implementation and
  Applications of Automata (CIAA 2012)}, volume 7381 of {\em LNCS}, pages
  240--252, 2012.

\bibitem{cant}
Ines Klimann and Matthieu Picantin.
\newblock Automaton (semi)groups: {W}ang tilings and {S}chreier tries.
\newblock In Val{\'e}rie Berth{\'e} and Michel Rigo, editors, {\em Sequences,
  Groups, and Number Theory}. Trends in Mathematics, 2017.

\bibitem{kpsDLT}
Ines Klimann, Matthieu Picantin, and Dmytro Savchuk.
\newblock A connected 3-state reversible mealy automaton cannot generate an
  infinite burnside group.
\newblock In {\em 19th International Conference on Developments in Language
  Theory (DLT 2015)}, volume 9168 of {\em LNCS}, pages 313--325, 2015.

\bibitem{kpsORB}
Ines Klimann, Matthieu Picantin, and Dmytro Savchuk.
\newblock Orbit automata as a new tool to attack the order problem in automaton
  groups.
\newblock {\em J. Algebra}, 445:433--457, 2016.

\bibitem{kram}
Daan Krammer.
\newblock An asymmetric generalisation of {A}rtin monoids.
\newblock {\em Groups Complex. Cryptol.}, 5:141--168, 2013.

\bibitem{lmos}
Yaroslav Lavrenyuk, Volodymyr Mazorchuk, Andriy Oliynyk, and Vitaliy
  Sushchansky.
\newblock Faithful group actions on rooted trees induced by actions of
  quotients.
\newblock {\em Comm. Algebra}, 35(11):3759--3775, 2007.

\bibitem{malcev37}
Anatoly~I. Malcev.
\newblock On the immersion of an algebraic ring into a field.
\newblock {\em Math. Ann.}, 113(1):686--691, 1937.

\bibitem{malcev39}
Anatoly~I. Malcev.
\newblock \"{U}ber die {E}inbettung von assoziativen {S}ystemen in {G}ruppen.
\newblock {\em Rec. Math. [Mat. Sbornik] N.S.}, 6 (48):331--336, 1939.

\bibitem{malcev40}
Anatoly~I. Malcev.
\newblock \"{U}ber die {E}inbettung von assoziativen {S}ystemen in {G}ruppen.
  {II}.
\newblock {\em Rec. Math. [Mat. Sbornik] N.S.}, 8 (50):251--264, 1940.

\bibitem{kou}
Victor~D. Mazurov and Evgeny~I. Khukhro.
\newblock Unsolved problems in group theory. {T}he {K}ourovka notebook.
\newblock No~19.
\newblock URL: \url{https://kourovka-notebook.org/}.

\bibitem{McCune}
David McCune.
\newblock {\em Groups and Semigroups Generated by Automata}.
\newblock PhD thesis, Univ Nebraska-Lincoln, 2011.

\bibitem{muntyan_s:automgrp}
Yevgen Muntyan and Dmytro Savchuk.
\newblock {\em AutomGrp -- GAP package for computations in self-similar groups
  and semigroups, Version 1.2.4}, 2014.
\newblock URL: \url{http://www.gap-system.org/Packages/automgrp.html}.

\bibitem{nekrash:self-similar}
Volodymyr~V. Nekrashevych.
\newblock {\em Self-similar groups}, volume 117 of {\em Mathematical Surveys
  and Monographs}.
\newblock American Mathematical Society, Providence, RI, 2005.

\bibitem{Ozo}
Viktoriya Ozornova.
\newblock {\em Factorability, discrete Morse theory, and a reformularion of
  $K(\pi, 1)$-conjecture}.
\newblock PhD thesis, Univ Bonn, 2013.
\newblock URL: \url{http://hss.ulb.uni-bonn.de/2013/3117/3117.pdf}.

\bibitem{pTransducer}
Matthieu Picantin.
\newblock Finite transducers for divisibility monoids.
\newblock {\em Theoret. Comput. Sci.}, 362(1-3):207--221, 2006.

\bibitem{pOneRelator}
Matthieu Picantin.
\newblock Tree products of cyclic groups and {HNN} extensions.
\newblock Preprint, 2013.
\newblock URL: \url{http://arxiv.org/abs/1306.5724}.

\bibitem{pHdR}
Matthieu Picantin.
\newblock {\em Automates, (semi)groupes, dualit{\'e}s}.
\newblock Habilitation {\`a} diriger des recherches, Univ Paris Diderot, 2017.

\bibitem{saka}
Jacques Sakarovitch.
\newblock {\em Elements of Automata Theory}.
\newblock Cambridge University Press, New York, NY, USA, 2009.

\bibitem{pplax}
Marcel-Paul Sch\"{u}tzenberger.
\newblock Pour le mono\"{i}de plaxique.
\newblock {\em Math. Inform. Sci. Humaines}, 140:5--10, 1997.

\bibitem{silva}
Pedro~V. Silva.
\newblock Groups and automata: A perfect match.
\newblock In {\em 14th International Workshop on Descriptional Complexity of
  Formal Systems (DCFS 2012)}, volume 7386 of {\em LNCS}, pages 50--63, 2012.

\bibitem{SilvaSteinberg}
Pedro~V. Silva and Benjamin Steinberg.
\newblock On a class of automata groups generalizing lamplighter groups.
\newblock {\em Internat. J. Algebra Comput.}, 15(5-6):1213--1234, 2005.

\bibitem{Wise}
Daniel~T. Wise.
\newblock A non-{H}opfian automatic group.
\newblock {\em J. Algebra}, 180(3):845--847, 1996.

\end{thebibliography}

\end{document}